\renewcommand{\eqref}[1]{\hyperref[#1]{(\ref{#1})}}
\newlist{enumlist}{enumerate}{1}
\setlist[enumlist]{labelindent=0cm,label=\arabic*.,labelwidth=2.5ex,labelsep=0.5ex,leftmargin=3ex,align=left,topsep=0.5ex,itemsep=1ex,parsep=1ex}
\newlist{my-enumlist}{enumerate}{1}
\setlist[my-enumlist]{label={(\arabic*)},labelwidth=5ex,labelsep=1ex,leftmargin=6ex,labelindent=0cm,align=left,topsep=0.5ex,itemsep=1ex,parsep=1ex}
\newlist{itemlist}{itemize}{1}
\setlist[itemlist]{labelindent=0cm,label=$\bullet$,labelwidth=2.5ex,labelsep=0.5ex,leftmargin=3ex,align=left,topsep=0.5ex,itemsep=1ex,parsep=1ex}
\numberwithin{equation}{section}
\theoremstyle{definition}\newtheorem{definition}{Definition}[section]
\newtheorem{remark}[definition]{Remark}
\newtheorem{example}[definition]{Example}}
\newtheorem{proposition}[definition]{Proposition}
\newtheorem{lemma}[definition]{Lemma}
\newtheorem{theorem}[definition]{Theorem}
\newtheorem{corollary}[definition]{Corollary}
\newtheorem{letterthm}{Theorem}
\newcommand{\bim}[3]{\mathord{\raisebox{-0.4ex}[0ex][0ex]{\scriptsize $#1$}{#2}\hspace{-0.25ex}\raisebox{-0.4ex}[0ex][0ex]{\scriptsize $#3$}}}
\newcommand{\C}{\mathbb{C}}
\newcommand{\eps}{\varepsilon}
\newcommand{\al}{\alpha}
\newcommand{\be}{\beta}
\newcommand{\Irr}{\operatorname{Irr}}
\newcommand{\ot}{\otimes}
\newcommand{\recht}{\rightarrow}
\newcommand{\Z}{\mathbb{Z}}
\newcommand{\vphi}{\varphi}
\newcommand{\op}{^\text{\rm op}}
\newcommand{\id}{\mathord{\text{\rm id}}}
\newcommand{\om}{\omega}
\newcommand{\N}{\mathbb{N}}
\newcommand{\ovt}{\mathbin{\overline{\otimes}}}
\newcommand{\Tr}{\operatorname{Tr}}
\newcommand{\Om}{\Omega}
\newcommand{\si}{\sigma}
\newcommand{\R}{\mathbb{R}}
\newcommand{\counit}{\epsilon}
\newcommand{\F}{\mathbb{F}}
\newcommand{\cH}{\mathcal{H}}
\newcommand{\cZ}{\mathcal{Z}}
\newcommand{\Ad}{\operatorname{Ad}}
\newcommand{\cK}{\mathcal{K}}
\newcommand{\cJ}{\mathcal{J}}
\newcommand{\cF}{\mathcal{F}}
\newcommand{\T}{\mathbb{T}}
\newcommand{\actson}{\curvearrowright}
\newcommand{\cW}{\mathcal{W}}
\newcommand{\cU}{\mathcal{U}}
\newcommand{\cM}{\mathcal{M}}
\newcommand{\lspan}{\operatorname{span}}
\newcommand{\cN}{\mathcal{N}}
\newcommand{\cR}{\mathcal{R}}
\newcommand{\dpr}{^{\prime\prime}}
\newcommand{\cV}{\mathcal{V}}
\newcommand{\Aut}{\operatorname{Aut}}
\newcommand{\rP}{\operatorname{P}}
\newcommand{\SO}{\operatorname{SO}}
\newcommand{\PGL}{\operatorname{PGL}}
\newcommand{\PSL}{\operatorname{PSL}}
\newcommand{\even}{_{\text{\rm even}}}
\newcommand{\odd}{_{\text{\rm odd}}}
\newcommand{\cspan}{\overline{\operatorname{span}}}
\newcommand{\zdim}{\operatorname{zdim}}
\begin{document}

\begin{center}
{\boldmath\LARGE\bf Thin II$_1$ factors with no Cartan subalgebras}

\bigskip

{\sc by Anna Sofie Krogager and Stefaan Vaes{\renewcommand\thefootnote{}\footnote{\noindent KU~Leuven, Department of Mathematics, Leuven (Belgium).\\ E-mails: annasofie.krogager@kuleuven.be and stefaan.vaes@kuleuven.be.\\ Supported by European Research Council Consolidator Grant 614195, and by long term structural funding~-- Methusalem grant of the Flemish Government.}}}
\end{center}

\begin{abstract}\noindent
It is a wide open problem to give an intrinsic criterion for a II$_1$ factor $M$ to admit a Cartan subalgebra $A$. When $A \subset M$ is a Cartan subalgebra, the $A$-bimodule $L^2(M)$ is ``simple'' in the sense that the left and right action of $A$ generate a maximal abelian subalgebra of $B(L^2(M))$. A II$_1$ factor $M$ that admits such a subalgebra $A$ is said to be $s$-thin. Very recently, Popa discovered an intrinsic local criterion for a II$_1$ factor $M$ to be $s$-thin and left open the question whether all $s$-thin II$_1$ factors admit a Cartan subalgebra. We answer this question negatively by constructing $s$-thin II$_1$ factors without Cartan subalgebras.
\end{abstract}

\section{Introduction}

One of the main decomposability properties of a II$_1$ factor $M$ is the existence of a \emph{Cartan subalgebra} $A \subset M$, i.e.\ a maximal abelian subalgebra (MASA) whose normalizer $\cN_M(A) = \{u \in \cU(M) \mid uAu^* = A\}$ generates $M$ as a von Neumann algebra. Indeed by \cite{FM75}, when $M$ admits a Cartan subalgebra, then $M$ can be realized as the von Neumann algebra $L_\Omega(\cR)$ associated with a countable equivalence relation $\cR$, possibly twisted by a scalar $2$-cocycle $\Omega$. If moreover this Cartan subalgebra is unique in the appropriate sense, this decomposition $M = L_\Omega(\cR)$ is canonical.

Although a lot of progress on the existence and uniqueness of Cartan subalgebras has been made (see e.g.\ \cite{OP07,PV11}), there is so far no intrinsic local criterion to check whether a given II$_1$ factor admits a Cartan subalgebra. When $A \subset M$ is a Cartan subalgebra, then $A \subset M$ is in particular an \emph{$s$-MASA}, meaning that the $A$-bimodule $\bim{A}{L^2(M)}{A}$ is \emph{cyclic}, i.e.\ there exists a vector $\xi \in L^2(M)$ such that $A \xi A$ spans a dense subspace of $L^2(M)$. Although it was already shown in \cite{Pu59} that the hyperfinite II$_1$ factor $R$ admits an $s$-MASA $A \subset R$ that is \emph{singular} (i.e.\ that satisfies $\cN_R(A)\dpr = A$), all examples of $s$-MASAs so far were inside II$_1$ factors that also admit a Cartan subalgebra.

Very recently in \cite{Po16}, Popa discovered that the existence of an $s$-MASA in a II$_1$ factor $M$ is an intrinsic local property. He proved that a II$_1$ factor $M$ admits an $s$-MASA if and only if $M$ satisfies the $s$-thin approximation property: for every finite partition of the identity $p_1,\ldots,p_n$ in $M$, every finite subset $\cF \subset M$ and every $\eps > 0$, there exists a finer partition of the identity $q_1,\ldots,q_m$ and a single vector $\xi \in L^2(M)$ such that every element in $\cF$ can be approximated up to $\epsilon$ in $\| \cdot \|_2$ by linear combinations of the $q_i \xi q_j$.

Although an $s$-MASA can be singular and although it is even proved in \cite[Corollary 4.2]{Po16} that every $s$-thin II$_1$ factor admits uncountably many non conjugate singular $s$-MASAs, as said above, all known $s$-thin factors so far also admit a Cartan subalgebra and Popa poses as \cite[Problem 5.1.2]{Po16} to give examples of $s$-thin factors without Cartan subalgebras. We solve this problem here by constructing $s$-thin II$_1$ factors $M$ that are even \emph{strongly solid:} whenever $B \subset M$ is a diffuse amenable von Neumann subalgebra, the normalizer $\cN_M(B)\dpr$ stays amenable. Clearly, nonamenable strongly solid II$_1$ factors have no Cartan subalgebras.

We obtain this new class of strongly solid II$_1$ factors by applying Popa's deformation/rigidity theory to Shlyakhtenko's $A$-valued semicircular systems (see \cite{Sh97} and Section \ref{Shlyakhtenko} below). When $A$ is abelian, this provides a rich source of examples of MASAs with special properties, like MASAs satisfying the $s$-thin approximation property of \cite{Po16}.

Generalizing Voiculescu's free Gaussian functor \cite{Vo83}, the data of Shlyakhtenko's construction consists of a tracial von Neumann algebra $(A,\tau)$ and a symmetric $A$-bimodule $\bim{A}{H}{A}$, where the symmetry is given by an anti-unitary operator $J : H \recht H$ satisfying $J^2=1$ and $J(a\cdot \xi \cdot b) = b^* \cdot J\xi \cdot a^*$. The construction produces a tracial von Neumann algebra $M$ containing $A$ such that $\bim{A}{L^2(M)}{A}$ can be identified with the full Fock space
$$L^2(A) \oplus \bigoplus_{n \geq 1} \bigl( \underbrace{H \ot_A \cdots \ot_A H}_{\text{$n$ times}} \bigr) \; .$$

In the same way as the free Gaussian functor transforms direct sums of real Hilbert spaces into free products of von Neumann algebras, the construction of \cite{Sh97} transforms direct sums of $A$-bimodules into free products that are amalgamated over $A$. Therefore, the deformation/rigidity results and methods for amalgamated free products introduced in \cite{IPP05,Io12}, and in particular Popa's $s$-malleable deformation obtained by ``doubling and rotating'' the $A$-bimodule,  can be applied and yield the following result, proved in Corollaries \ref{cor.rel-strong-solid} and \ref{cor.no-Cartan-weak-mixing} below (see Theorem \ref{thm.absence-Cartan} for the most general statement).

\begin{letterthm}\label{thm.main-A}
Let $(A,\tau)$ be a tracial von Neumann algebra and let $M$ be the von Neumann algebra associated with a symmetric $A$-bimodule $\bim{A}{H}{A}$. Assume that $\bim{A}{H}{A}$ is weakly mixing (Definition \ref{def2}) and that the left action of $A$ on $H$ is faithful. Then, $M$ has no Cartan subalgebra. If moreover $\bim{A}{H}{A}$ is mixing and $A$ is amenable, then $M$ is strongly solid.
\end{letterthm}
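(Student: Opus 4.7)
The plan is to apply Popa's deformation/rigidity theory to the canonical $s$-malleable deformation that Shlyakhtenko's construction carries. Consider the symmetric $A$-bimodule $H \oplus H$ and let $\widetilde{M} \supset M$ be its Shlyakhtenko algebra. Rotating the two copies of $H$ yields a trace-preserving one-parameter group $(\al_t)_{t \in \R} \subset \Aut(\widetilde{M})$ fixing $A$ pointwise, together with a period-two symmetry $\be \in \Aut(\widetilde{M})$ with $\be|_M = \id$ and $\be \al_t \be = \al_{-t}$. The standard transversality inequality
$$\|\al_{2t}(x) - E_M(\al_{2t}(x))\|_2 \leq 2 \|\al_t(x) - x\|_2 \qquad (x \in M)$$
then holds; moreover $M$ and $\al_1(M)$ are free with amalgamation over $A$ inside $\widetilde{M}$, making the amalgamated-free-product deformation/rigidity machinery of \cite{IPP05,Io12} available.

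\textbf{Absence of Cartan.} Let $B \subset M$ be a Cartan subalgebra. The first step is to show $B \preceq_M A$ in Popa's sense: otherwise, the transversality inequality combined with $\cN_M(B)\dpr = M$ and the maximal abelianness of $B$ would allow a standard normalizer-propagation argument to extend uniform deformation-convergence from $(B)_1$ to $(M)_1$, which is impossible since $\al_1 \neq \id$. Given $B \preceq_M A$, the weak mixing of $\bim{A}{H}{A}$ forces the quasi-normalizer of $A$ in $M$ to coincide with $A$; a bootstrap then shows that every unitary normalizing $B$ lies in $A$, whence $M = \cN_M(B)\dpr \subset A$. Since faithfulness of the left action ensures $H \neq 0$ (excluding the trivial case $M = A = \C$), one has $A \subsetneq M$, a contradiction.

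\textbf{Strong solidity.} Assume $A$ is amenable and $\bim{A}{H}{A}$ is mixing. Let $B \subset M$ be diffuse amenable and set $P := \cN_M(B)\dpr$; the goal is to show $P$ is amenable. Popa's spectral gap argument applied to $(\al_t)$ yields a dichotomy: either $B \preceq_M A$, or amenability of $B$ combined with the transversality inequality produces a nonzero $P$-$P$-bimodular embedding of a corner of $L^2(\widetilde{M}) \ominus L^2(M)$ into an iterated basic-construction tower over $A$. In the first case, the mixing hypothesis (strictly stronger than weak mixing, and needed precisely here) allows us to pull $P$ itself into $A$, and amenability of $A$ then gives amenability of $P$. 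In the second case, the bimodule embedding together with amenability of $A$ forces $P$ to be amenable relative to $A$ inside $M$, and then absolutely amenable.

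\textbf{Main obstacle.} The key technical point is pushing intertwining past the normalizer. For absence of Cartan, weak mixing of $\bim{A}{H}{A}$ is enough to pin down the quasi-normalizer of $A$ and hence of $B$. For strong solidity, we need the full mixing condition to propagate $B \preceq_M A$ up to $P \preceq_M A$; this is precisely the step where weak mixing would be insufficient, and it is where the main technical work is expected to lie.
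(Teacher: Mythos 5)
Your overall architecture (double the bimodule, rotate, use the amalgamated free product machinery of \cite{IPP05,Io12,Va13} to get a dichotomy, then treat the two branches) is the right starting point and your sketch of the strong solidity half is essentially the paper's argument (Corollary \ref{cor.rel-strong-solid}: the mixing of $\bim{A}{H}{A}$ makes $A \subset M$ a mixing inclusion, so intertwining passes from $B$ to $\cN_M(B)\dpr$ via \cite[Lemma 9.4]{Io12}, and the dichotomy of Theorem \ref{thm1} finishes). But the absence-of-Cartan half has a genuine gap, and it sits exactly where the paper has to work hardest. Suppose $B \subset M$ is Cartan and you have reached $B \prec_M A$. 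Weak mixing of $\bim{A}{H}{A}$ does control the (quasi-)normalizer of $A$ itself, but intertwining only produces a corner $Bq_1$, a partial isometry $v$, and an embedding $\theta(Bq_1) = B_0 \subset pAp$; it does \emph{not} place $B$ inside $A$, and the unitaries normalizing $B$ have no reason to normalize $A$ or $B_0$. To exploit the position of $B_0$ inside $A$ one would need weak mixing of $H$ as a $B_0$-$A$-bimodule, which is \emph{not} implied by weak mixing over $A$ (this is precisely the difference with the mixing case, where \cite[Lemma 9.4]{Io12} applies). Your ``bootstrap'' therefore does not go through, and indeed the paper's proof of Theorem \ref{thm.absence-Cartan}\ref{stat-b} splits into the two cases where the $B_0$-$A$-bimodule $pK$ is, or is not, left weakly mixing, and needs the auxiliary subalgebra $N$ of \eqref{eq.vnalg-N}, essential-finite-index arguments (\cite[Lemma 1.6]{Io11}, Lemma \ref{lem.infinite-dim}), and the tensor-product-position Lemmas \ref{lem.diffuse-element-commutant}, \ref{lem.complement} and \ref{lem.no-complement} to reach a contradiction. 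None of this is captured by ``the quasi-normalizer of $A$ equals $A$''.

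The other branch of the dichotomy is also not handled. If $B \not\prec_M A$, Theorem \ref{thm1} gives that $M = \cN_M(B)\dpr$ is amenable relative to $A$, and one must rule this out; in the paper this is the content of Theorem \ref{thm.max-amen}(3) (Popa's asymptotic orthogonality combined with \cite{BH16}), equivalently statement \ref{stat-a} of Theorem \ref{thm.absence-Cartan}, and it uses weak mixing and faithfulness in an essential way. Your substitute argument --- that uniform convergence of $\al_t$ on the unit ball of $B$ would propagate to $M$ and contradict $\al_1 \neq \id$ --- does not work: for a general abelian (hence amenable) $B$ there is no rigidity forcing uniform convergence of the deformation on $(B)_1$ in the first place, and nontriviality of $\al_1$ on $\widetilde{M}$ is not by itself an obstruction to such convergence on $M$. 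So both branches of the no-Cartan argument need the substantial extra input that constitutes Sections 4--5 of the paper, and the proof as proposed is incomplete at these two points.
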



In the particular case where $A$ is diffuse abelian and the bimodule $\bim{A}{H}{A}$ is weakly mixing, we get that $A \subset M$ is a singular MASA. Very interesting examples arise as follows by taking $A = L^\infty(K,\mu)$ where $K$ is a second countable compact group with Haar probability measure $\mu$. Whenever $\nu$ is a probability measure on $K$, we consider the $A$-bimodule $H_\nu$ given by
\begin{equation}\label{eq.my-bim-compact-group}
H_\nu = L^2(K \times K,\mu \times \nu) \quad\text{with}\;\; (F \cdot \xi \cdot G)(x,y) = F(xy) \, \xi(x,y) \, G(x)  \; ,
\end{equation}
for all $F,G \in A$ and $\xi \in H_\nu$. We assume that $\nu$ is symmetric and use the symmetry
\begin{equation}\label{eq.my-symm}
J_\nu : H_\nu \recht H_\nu : (J \xi)(x,y) = \overline{\xi(xy,y^{-1})} \quad\text{for all}\;\; x,y \in K \; .
\end{equation}
We denote by $M$ the tracial von Neumann algebra associated with the $A$-bimodule $(H_\nu,J_\nu)$.

The $A$-bimodule $H_\nu$ is weakly mixing if and only if the measure $\nu$ has no atoms, while $H_\nu$ is mixing when the probability measure $\nu$ is $c_0$, meaning that the convolution operator $\lambda(\nu)$ on $L^2(K)$ is compact (see Definition \ref{def.c0} and Proposition \ref{prop.char}). So for all $c_0$ probability measures $\nu$ on $K$, we get that $M$ is strongly solid.

On the other hand, when the measure $\nu$ is concentrated on a subset of the form $F \cup F^{-1}$, where $F \subset K$ is \emph{free} in the sense that every reduced word with letters from $F \cup F^{-1}$ defines a nontrivial element of $K$, then $A \subset M$ is an $s$-MASA.

In Theorem \ref{thm.free-c0}, we construct a compact group $K$, a free subset $F \subset K$ generating $K$ and a symmetric $c_0$ probability measure $\nu$ with support $F \cup F^{-1}$. For this, we use results of \cite{AR92,GHSSV07} on the spectral gap and girth of a random Cayley graph of the finite group $\PGL(2,\Z/p\Z)$. As a consequence, we obtain the first examples of $s$-thin II$_1$ factors that have no Cartan subalgebra, solving \cite[Problem 5.1.2]{Po16}, which was the motivation for our work.

\begin{letterthm}\label{thm.main-B}
Taking a compact group $K$ and a symmetric probability measure $\nu$ on $K$ as above, the associated II$_1$ factor $M$ is nonamenable, strongly solid and the canonical subalgebra $A \subset M$ is an $s$-MASA.
\end{letterthm}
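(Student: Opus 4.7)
The plan is to combine the construction of Theorem~\ref{thm.free-c0} (the compact group $K$ with a symmetric $c_0$ probability measure $\nu$ supported on $F \cup F^{-1}$, $F$ free and generating) with the analytic machinery already put in place. Each of the three conclusions then reduces to checking hypotheses of an earlier statement.

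First I would establish strong solidity by invoking Theorem~\ref{thm.main-A} directly. Three inputs are required: (i) $A = L^\infty(K,\mu)$ is abelian, hence amenable; (ii) the left action of $A$ on $H_\nu$ is faithful, which is visible from \eqref{eq.my-bim-compact-group} because $(F \cdot \xi)(x,y) = F(xy)\,\xi(x,y)$ vanishes for all $\xi$ only when $F\equiv 0$ a.e.; and (iii) the $A$-bimodule $\bim{A}{H_\nu}{A}$ is mixing, which by Proposition~\ref{prop.char} is exactly the content of $\nu$ being $c_0$, and is part of the hypothesis. Feeding these into Theorem~\ref{thm.main-A} yields that $M$ is strongly solid.

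Second I would verify that $A \subset M$ is an $s$-MASA. By the paragraph of the introduction preceding Theorem~\ref{thm.main-B}, this is precisely the criterion that applies when $\nu$ is concentrated on $F \cup F^{-1}$ with $F$ free in $K$: using freeness, the iterated relative tensor powers $H_\nu^{\otimes_A n}$ decompose canonically into orthogonal summands indexed by reduced words of length $n$ in $F \cup F^{-1}$, and this combinatorial decomposition of the full Fock space $L^2(M)$ makes it possible to exhibit a single vector $\xi\in L^2(M)$ whose $A$-bimodule span is dense. (Weak mixing of $H_\nu$, which rules out competing atomic components, is automatic since the support $F \cup F^{-1}$ of a free set contains no identity element and $\nu$ is atomless on $K$.)

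Third I would observe that $M$ is nonamenable because the construction of Theorem~\ref{thm.free-c0} produces an $F$ with $|F|\ge 2$: the free structure lets one extract, inside $M$, Shlyakhtenko semicircular elements corresponding to distinct elements of $F$ that are free with respect to the trace, generating a copy of $L\F_n$ with $n\ge 2$. This embedded free group factor is nonamenable, hence so is $M$.

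The step that takes real work is none of the above; essentially all of the effort is displaced into Theorem~\ref{thm.free-c0}, whose proof uses \cite{AR92,GHSSV07} on spectral gap and girth of random Cayley graphs of $\PGL(2,\Z/p\Z)$ to simultaneously realise freeness of $F$ in the compact group it generates and the $c_0$ condition on a symmetric probability measure supported on $F\cup F^{-1}$. Once that construction and the bimodule-analytic facts (Theorem~\ref{thm.main-A}, Proposition~\ref{prop.char}, and the $s$-MASA criterion for free supports) are in hand, the proof of Theorem~\ref{thm.main-B} is a short verification of hypotheses.
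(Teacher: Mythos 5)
Your overall route is the same as the paper's: Theorem \ref{thm.free-c0} supplies $(K,\nu,F)$, and everything else is a hypothesis check against Proposition \ref{prop.char} (strong solidity via the $c_0$/mixing condition and Corollary \ref{cor.rel-strong-solid} or Theorem \ref{thm.main-A}; the $s$-MASA via the reduced-word decomposition, which is exactly part 3 of Proposition \ref{prop.char}, with the MASA half coming from weak mixing, i.e.\ atomlessness of $\nu$). The genuine gap is factoriality: the statement asserts that $M$ is a II$_1$ \emph{factor}, and you never prove it. It is not automatic from weak mixing: by Theorem \ref{thm.max-amen}(1) (equivalently Proposition \ref{prop.char}(2)), $\cZ(M) = \{a \in L^\infty(K) \mid a(xy) = a(x) \;\text{for $\mu\times\nu$-a.e.\ }(x,y)\} = L^\infty(K/K_0)$, where $K_0$ is the closed subgroup generated by $\operatorname{supp}\nu$, and one must use that $F$ topologically generates $K$ (a property built into Theorem \ref{thm.free-c0} precisely for this purpose) to get $\cZ(M) = \C 1$. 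You list ``generating'' when summarizing the construction but never use it, so the theorem as stated is not established.

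Your nonamenability step is also not a proof as written, although it can be repaired. Since $\nu$ is atomless, there are no semicircular elements ``corresponding to distinct elements of $F$''; what one can do is split $F = F_1 \sqcup F_2$ into Borel sets of positive $\nu$-measure, take $J_\nu$-invariant vectors $\xi_i(x,y) = f_i(y)$ with $f_i$ supported on $F_i \cup F_i^{-1}$, observe that $\langle \xi_i,\xi_i\rangle_A$ is a scalar so that each $s_i = \ell(\xi_i)+\ell(\xi_i)^*$ is semicircular and $E_A$ of any polynomial in $s_i$ alone is scalar, and then upgrade the amalgamated freeness over $A$ (Proposition \ref{prop2}) to scalar freeness, giving a copy of $L(\F_2)$ inside $M$. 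Note that this argument uses only atomlessness of $\nu$, not freeness of $F$, so invoking ``the free structure'' here is a red herring. The paper instead reads nonamenability (in fact absence of amenable direct summands) off Theorem \ref{thm.max-amen}(3), i.e.\ Proposition \ref{prop.char}(2), using weak mixing and faithfulness of the bimodule; either route works, but your step 3 needs the above (or a citation) to be complete, and in any case nonamenability of the II$_1$ factor claim still requires the center computation discussed above.
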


As we explain in Remark \ref{rem.bogol-crossed}, the so-called free Bogoljubov crossed products $L(\F_\infty) \rtimes G$ associated with an (infinite dimensional) orthogonal representation of a countable group $G$ can be written as the von Neumann algebra associated with a symmetric $A$-bimodule where $A = L(G)$. Therefore, our Theorem \ref{thm.main-A} is a generalization of similar results proved in \cite{Ho12b} for free Bogoljubov crossed products. Although free Bogoljubov crossed products $M = L(\F_\infty) \rtimes G$ with $G$ abelian provide examples of MASAs $L(G) \subset M$ with interesting properties (see \cite{HS09,Ho12a}), $L(G) \subset M$ can never be an $s$-MASA (see Remark \ref{rem.free-bogol-abelian}).

The point of view of $A$-valued semicircular systems is more flexible and even offers advantages in the study of free Bogoljubov crossed products $M = L(\F_\infty) \rtimes G$. Indeed, in Corollary \ref{cor.free-Bogol}, we prove that these II$_1$ factors $M$ never have a Cartan subalgebra, while in \cite{Ho12b}, this could only be proved for special classes of orthogonal representations.

In Theorem \ref{thm.max-amen}, we prove several maximal amenability results for the inclusion $A \subset M$ associated with a symmetric $A$-bimodule $(H,J)$, by combining the methods of \cite{Po83,BH16}. Again, these results generalize \cite{Ho12a,Ho12b} where the same was proved for free Bogoljubov crossed products.

We finally make some concluding remarks on the existence of $c_0$ probability measures supported on free subsets of a compact group. On an \emph{abelian} compact group $K$, a probability measure $\nu$ is $c_0$ if and only if its Fourier transform $\widehat{\nu}$ tends to zero at infinity as a function from $\widehat{K}$ to $\C$. Of course, no two elements of an abelian group are free, but the abelian variant of being free is the so-called independence property: a subset $F$ of an abelian compact group $K$ is called independent if any linear combination of distinct elements in $F$ with coefficients in $\Z \setminus \{0\}$ defines a non zero element in $K$. It was proved in \cite{Ru60} that there exist closed independent subsets of the circle group $\T$ that carry a $c_0$ probability measure. It would be very interesting to get a better understanding of which, necessarily non abelian, compact groups admit $c_0$ probability measures supported on a free subset and we conjecture that these exist on the groups $\SO(n)$, $n \geq 3$.

\section{Preliminaries}

Let $(A,\tau)$ be a tracial von Neumann algebra.

\begin{definition}\label{def1}
A \emph{symmetric} $A$-bimodule $(H,J)$ is an $A$-bimodule $\bim{A}{H}{A}$ equipped with an anti-unitary operator $J\colon H\to H$ such that $J^2=1$ and
$$J(a\cdot\xi\cdot b)=b^\ast\cdot J\xi\cdot a^\ast,\qquad\forall a,b\in A \; .$$
\end{definition}

A vector $\xi$ in a right (resp.\ left) $A$-module $H$ is said to be right (resp.\ left) bounded if there exists a $\kappa>0$ such that $\|\xi a\|\leq\kappa\|a\|_2$ (resp.\ $\|a\xi\|\leq\kappa\|a\|_2$) for all $a\in A$. Whenever $\xi$ is right bounded, we denote by $\ell(\xi)$ the map $L^2(A)\to H : a\mapsto \xi a$. Similarly, when $\xi$ is left bounded, we denote by $r(\xi)$ the map $L^2(A)\to H : a\mapsto a\xi$.

Given right bounded vectors $\xi,\eta$, the operator $\ell(\xi)^\ast\ell(\eta)$ belongs to $A$ and is denoted $\langle\xi,\eta\rangle_A$. This defines an $A$-valued scalar product associated with the right $A$-module $H$. Similarly, if $\xi,\eta\in H$ are left bounded vectors, we define an $A$-valued scalar product associated with the left $A$-module $H$ by ${}_A\langle \xi,\eta\rangle=Jr(\xi)^\ast r(\eta)J\in A$. Here, $J$ denotes the canonical involution on $L^2(A)$.

Popa's non intertwinability condition (see \cite[Section 2]{Po03}) saying that $B \not\prec_M A$ is equivalent with the existence of a sequence of unitaries $b_n \in \cU(B)$ such that $\lim_n \|E_A(x b_n y)\|_2 = 0$ for all $x,y \in M$ can be viewed as a weak mixing condition for the $B$-$A$-bimodule $\bim{B}{L^2(M)}{A}$ (cf.\ the notions of relative (weak) mixing in \cite[Definition 2.9]{Po05}). This then naturally lead to the notion of a mixing, resp.\ weakly mixing bimodule in \cite{PS12}.

\begin{definition}[{\cite{PS12}}] \label{def2} Let $(A,\tau)$ and $(B,\tau)$ be tracial von Neumann algebras and $\bim{B}{H}{A}$ a $B$-$A$-bimodule.

\begin{enumlist}
\item $\bim{B}{H}{A}$ is called \textit{left weakly mixing} if there exists a net of unitaries $b_n\in\cU(B)$ such that for all right bounded vectors $\xi,\eta\in H$, we have
$$\lim_n\|\langle b_n\xi,\eta\rangle_A\|_2=0 \; .$$

\item $\bim{B}{H}{A}$ is called \textit{left mixing} if every net $b_n\in\cU(B)$ tending to $0$ weakly satisfies $$\lim_n\|\langle b_n\xi,\eta\rangle_A\|_2=0$$ for all right bounded vectors $\xi,\eta\in H$.
\end{enumlist}
We similarly define the notions of \emph{right (weak) mixing}. When $\bim{A}{H}{A}$ is a symmetric $A$-bimodule, left (weak) mixing is equivalent with right (weak) mixing and we simply refer to these properties as (weak) mixing.
\end{definition}


In \cite[Section 2]{Po03}, Popa proved that the intertwining relation $B \prec_M A$ is equivalent with the existence of a nonzero $B$-$A$-subbimodule of $L^2(M)$ having finite right $A$-dimension. In the same way, one gets the following characterization of weakly mixing bimodules. For details, see \cite{PS12} and \cite[Theorem A.2.2]{Bo14}.

\begin{proposition}[{\cite{Po03,PS12,Bo14}}]\label{prop5}
Let $(A,\tau)$ and $(B,\tau)$ be tracial von Neumann algebras and $\bim{B}{H}{A}$ a $B$-$A$-bimodule. The following are equivalent:
\begin{enumlist}
\item $\bim{B}{H}{A}$ is left weakly mixing;
\item $\{0\}$ is the only $B$-$A$-subbimodule of $\bim{B}{H}{A}$ of finite $A$-dimension;
\item $\bim{B}{(H\otimes_A \overline{H})}{B}$ has no nonzero $B$-central vectors.
\end{enumlist}
\end{proposition}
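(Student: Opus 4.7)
The plan is to establish the cycle (1)$\Rightarrow$(2)$\Rightarrow$(3)$\Rightarrow$(1), each step by contraposition. The first two implications follow from Pimsner--Popa type computations, while the real work lies in (3)$\Rightarrow$(1), which needs a Day-style averaging argument in $H\otimes_A\overline{H}$.

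For (1)$\Rightarrow$(2), I would suppose that $K\subset H$ is a nonzero $B$-$A$-subbimodule of finite right $A$-dimension and pick a Pimsner--Popa basis $(\xi_i)_{i=1}^n$ of right bounded vectors for $K$, so that $\sum_i\ell(\xi_i)\ell(\xi_i)^\ast=p_K$, the projection of $H$ onto $K$. Since $K$ is $B$-invariant, $p_K$ commutes with every $b\in\cU(B)$ and hence $b\,p_K\,b^\ast=p_K$. A direct expansion then gives
$$\sum_{i,j}\|\langle b\xi_i,\xi_j\rangle_A\|_2^{\,2}=\sum_j\tau\bigl(\ell(\xi_j)^\ast b\,p_K\,b^\ast\ell(\xi_j)\bigr)=\sum_j\tau\bigl(\langle\xi_j,\xi_j\rangle_A\bigr),$$
which is a strictly positive constant independent of $b$, precluding the existence of a net of unitaries along which all these $A$-valued scalar products vanish.

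For (2)$\Rightarrow$(3), I argue contrapositively: a nonzero $B$-central vector $\zeta\in H\otimes_A\overline{H}$ corresponds, under the standard identification of $H\otimes_A\overline{H}$ with the Hilbert space of right-$A$-linear operators on $H$ that are Hilbert--Schmidt with respect to the $A$-valued trace, to a nonzero bounded operator $T$ on $H$ which commutes with the right $A$-action and, by centrality of $\zeta$, with the left $B$-action. Replacing $T$ by $T^\ast T$ I may take $T\geq 0$; then for $\lambda>0$ small enough the spectral projection $e=1_{[\lambda,\infty)}(T)$ is nonzero, inherits the commutation with $B$ and right $A$, and has finite $A$-trace since $T$ is Hilbert--Schmidt. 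Its range is then a nonzero $B$-$A$-subbimodule of $H$ of finite right $A$-dimension.

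Finally, for (3)$\Rightarrow$(1), I rewrite the failure of weak mixing as a uniform finite condition: there exist right bounded vectors $\xi_1,\eta_1,\ldots,\xi_n,\eta_n$ and $\epsilon>0$ with $\sum_i\|\langle b\xi_i,\eta_i\rangle_A\|_2^{\,2}\geq\epsilon$ for every $b\in\cU(B)$. Setting $\zeta=\sum_i\xi_i\otimes_A\overline{\eta_i}\in H\otimes_A\overline{H}$, I consider $K_0=\overline{\operatorname{conv}}\{b\zeta b^\ast:b\in\cU(B)\}$; since unitary conjugation is isometric, uniform convexity yields a unique element $\zeta_0\in K_0$ of minimal norm, automatically $B$-central. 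To see $\zeta_0\neq 0$ I pair it against the test vector $\zeta'=\sum_j\eta_j\otimes_A\overline{\xi_j}$: expanding $\langle b\zeta b^\ast,\zeta'\rangle$ via the inner product formula on $H\otimes_A\overline{H}$ and choosing the pairing $\zeta'$ so that the diagonal terms accumulate as $\sum_i\|\langle b\xi_i,\eta_i\rangle_A\|_2^{\,2}$ while the off-diagonal cross terms are absorbed by Cauchy--Schwarz, one obtains a uniform lower bound $|\langle b\zeta b^\ast,\zeta'\rangle|\geq c\,\epsilon$ independent of $b$. This bound passes to convex combinations and to the norm limit $\zeta_0$, so $|\langle\zeta_0,\zeta'\rangle|\geq c\,\epsilon>0$, contradicting (3). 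The hardest part is precisely this last bookkeeping step: the matching of the test vector $\zeta'$ with $\zeta$ has to be arranged so that the positive diagonal contribution is not swamped by the cross terms, which is the $B$-bimodule analogue of Popa's convexity trick from \cite{Po03}.
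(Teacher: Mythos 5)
The paper does not prove this proposition itself (it refers to \cite{PS12} and \cite[Theorem A.2.2]{Bo14}), so I judge your argument on its own. Your reduction of the failure of weak mixing to the uniform condition $\sum_i\|\langle b\xi_i,\eta_i\rangle_A\|_2^2\geq\epsilon$ and your use of the minimal-norm element of $\overline{\operatorname{conv}}\{b\zeta b^*\mid b\in\cU(B)\}$ are the right ingredients, but the step you yourself flag as the hardest is where the proof breaks. Expanding with the inner product $\langle\xi_1\ot_A\overline{\eta_1},\xi_2\ot_A\overline{\eta_2}\rangle=\tau\bigl(\langle\eta_1,\eta_2\rangle_A\,\langle\xi_2,\xi_1\rangle_A\bigr)$, your pairing is $\langle b\zeta b^*,\zeta'\rangle=\sum_{i,j}\tau\bigl(\langle b\eta_i,\xi_j\rangle_A\,\langle\eta_j,b\xi_i\rangle_A\bigr)$: the diagonal terms are traces of products of two \emph{different} $A$-valued inner products, not the positive quantities $\|\langle b\xi_i,\eta_i\rangle_A\|_2^2$, they carry uncontrolled phases, and the off-diagonal terms are in no sense small, so Cauchy--Schwarz cannot ``absorb'' them while preserving a lower bound. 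The claimed uniform bound $|\langle b\zeta b^*,\zeta'\rangle|\geq c\epsilon$ is actually false: take $A=\C$, $B=M_2(\C)$, $H=\C^2$, and let the pairs $(\xi_i,\eta_i)$ run over all $(e_k,e_l)$, $k,l\in\{1,2\}$, so that $\sum_i\|\langle b\xi_i,\eta_i\rangle_A\|_2^2=\Tr(b^*b)=2$ for every unitary $b$; then $\zeta=\zeta'=\sum_{k,l}e_k\ot_A\overline{e_l}$ corresponds to twice the rank-one projection onto $v=(e_1+e_2)/\sqrt{2}$, and $\langle b\zeta b^*,\zeta'\rangle=4|\langle bv,v\rangle|^2=0$ for the rotation by $\pi/2$. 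The standard repair (and this is what the cited references do) is to build positivity in from the start: set $\zeta=\sum_k\mu_k\ot_A\overline{\mu_k}$, where $(\mu_k)_k$ enumerates all the $\xi_i$ together with all the $\eta_i$, and pair $b\zeta b^*$ against $\zeta$ \emph{itself}; then every term of the expansion equals $\|\langle b\mu_k,\mu_l\rangle_A\|_2^2\geq 0$, the sum dominates $\sum_i\|\langle b\xi_i,\eta_i\rangle_A\|_2^2\geq\epsilon$, and the bound passes to convex combinations and norm limits with no bookkeeping, so the circumcenter is a nonzero $B$-central vector. Equivalently, this is conjugating the positive finite-trace element $\sum_k\ell(\mu_k)\ell(\mu_k)^*$ of the commutant of the right $A$-action, which is Popa's convexity trick in its correct, positivity-protected form.

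Two smaller points. In (1)$\Rightarrow$(2), a subbimodule $K$ of finite right $A$-dimension need not admit a \emph{finite} Pimsner--Popa basis when $\cZ(A)$ is nontrivial; with a countable basis your identity $\sum_{i,j}\|\langle b\xi_i,\xi_j\rangle_A\|_2^2=\dim_{-A}(K)$ no longer contradicts weak mixing directly, since an infinite sum of terms each tending to $0$ need not tend to $0$. Fix this by first cutting $K$ on the right by a spectral projection $z\in\cZ(A)$ of its center-valued right dimension: right multiplication by a central element commutes with the left $B$-action, so $Kz$ is again a nonzero $B$-$A$-subbimodule and is finitely generated for a suitable cut, after which your finite computation works verbatim. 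In (2)$\Rightarrow$(3), the operator attached to a vector of $H\ot_A\overline{H}\cong L^2\bigl((A\op)'\cap B(H)\bigr)$ is square-integrable but in general unbounded, not bounded as you state; this is harmless, since $T^*T$ is affiliated with the semifinite commutant of the right $A$-action, is fixed under conjugation by $\cU(B)$, and its spectral projections $1_{[\lambda,\infty)}(T^*T)$ have finite trace, which is all your argument uses.
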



\section{\boldmath Shlyakhtenko's $A$-valued semicircular systems}\label{Shlyakhtenko}

We first recall Voiculescu's free Gaussian functor from the category of real Hilbert spaces to the category of tracial von Neumann algebras. Let $H_\R$ be a real Hilbert space and let $H$ be its complexification. The \textit{full Fock space} of $H$ is defined as
$$ \cF(H) = \C\Omega\oplus\bigoplus_{n=1}^\infty H^{\otimes n} \; .$$
The unit vector $\Omega$ is called the \textit{vacuum vector}. Given a vector $\xi\in H$, we define the \textit{left creation operator} $\ell(\xi)\in B(\cF(H))$ by
$$
\ell(\xi)(\Omega) = \xi \quad\text{and}\quad
\ell(\xi)(\xi_1\otimes\cdots\otimes\xi_n) = \xi\otimes\xi_1\otimes\cdots\otimes\xi_n \; .
$$
Put
$$\Gamma(H_\R)'' := \{\ell(\xi)+\ell(\xi)^\ast\mid\xi\in H_\R\}'' \; .$$
This von Neumann algebra is equipped with the faithful trace given by $\tau(\cdot)=\langle \cdot\,\Omega,\Omega\rangle$.
In \cite{Vo83}, it is proved that the operator $\ell(\xi)+\ell(\xi)^\ast$ has a semicircular distribution with respect to the trace $\tau$ and that $\Gamma(H_\R)''\cong L(\F_{\dim H_\R})$. By the functoriality of the construction, any orthogonal transformation $u$ of $H_\R$ gives rise to an automorphism $\al_u$ of $\Gamma(H_\R)\dpr$ satisfying $\al_u(\ell(\xi)+\ell(\xi)^*) = \ell(u\xi) + \ell(u\xi)^*$ for all $\xi \in H_\R$. So, every orthogonal representation $\pi : G \recht O(H_\R)$ of a countable group $G$ gives rise to the \emph{free Bogoljubov action} $\si_\pi : G \actson \Gamma(H_\R)\dpr$ given by $\si_\pi(g) = \al_{\pi(g)}$ for all $g \in G$.

In \cite{Sh97}, Shlyakhtenko introduced a generalization of Voiculescu's free Gaussian functor, this time being a functor from the category of symmetric $A$-bimodules (where $A$ is any von Neumann algebra) to the category of von Neumann algebras containing $A$. We will here repeat this construction in the case where $A$ is a tracial von Neumann algebra.

Let $(A,\tau)$ be a tracial von Neumann algebra and let $(H,J)$ be a symmetric $A$-bimodule. We denote by $H^{\otimes_A^n}$ the $n$-fold Connes tensor product $H\otimes_A H\otimes_A\cdots\otimes_A H$. The full Fock space of the $A$-bimodule $\bim{A}{H}{A}$ is defined by
\begin{equation}\label{eq.full-fock}
\cF_A(H) = L^2(A)\oplus\bigoplus_{n=1}^\infty H^{\otimes_A^n} \; .
\end{equation}

We denote by $\cH$ the set of left and right $A$-bounded vectors in $H$. Since $A$ is a tracial von Neumann algebra, $\cH$ is dense in $H$.
Given a right bounded vector $\xi\in H$, we define the left creation operator $\ell(\xi)$ analogous to the case where $A=\C$ by
\begin{align*}
& \ell(\xi)(a) = \xi a,\quad a\in A \; ,\\
& \ell(\xi)(\xi_1\otimes_A\ldots\otimes_A\xi_n)=\xi\otimes_A\xi_1\otimes_A\ldots\otimes_A\xi_n,\quad \xi_i\in \cH \; .
\end{align*}
Note that $a\ell(\xi)=\ell(a\xi)$ and $\ell(\xi)a=\ell(\xi a)$ for $a\in A$ and that the adjoint map $\ell(\xi)^\ast $ satisfies
\begin{align*}
&\ell(\xi)^\ast(a)=0\quad\text{for all }a\in L^2(A) \; ,\\
&\ell(\xi)^\ast(\xi_1\otimes_A\ldots\otimes_A\xi_n)=\langle\xi,\xi_1\rangle_A\xi_2\otimes_A\ldots\otimes_A\xi_n\quad\text{for }\xi_i\in \cH \; .
\end{align*}

\begin{definition}\label{def.A-semi}
Given a tracial von Neumann algebra $(A,\tau)$ and a symmetric $A$-bimodule $(H,J)$, we consider the full Fock space $\cF_A(H)$ given by \eqref{eq.full-fock} and define
$$\Gamma(H,J,A,\tau)'':= A\vee\{\ell(\xi)+\ell(\xi)^\ast\mid\xi\in \cH,\,J\xi=\xi\}''\subset B(\cF_A(H)) \; ,$$
where $A\subset B(\cF_A(H))$ is given by the left action on $\cF_A(H)$. We also have
$$\Gamma(H,J,A,\tau)'' = A\vee\{\ell(\xi)+\ell(J\xi)^\ast\mid\xi\in\cH\}'' \; .$$
\end{definition}

We denote by $\Omega$ the vacuum vector in $\cF_A(H)$ given by $\Omega=1_A\in L^2(A)$. We define $\tau$ as the vector state on $M=\Gamma(H,J,A,\tau)''$ given by the vacuum vector $\Omega$.
Whenever $n\geq1$ and $\xi_1,\ldots,\xi_n\in\cH$, we define the Wick product as in \cite[Lemma 3.2]{HR10} by
\begin{equation}\label{eq.wick}
W(\xi_1,\ldots,\xi_n) = \sum_{i=0}^n\ell(\xi_1)\cdots\ell(\xi_i)\ell(J\xi_{i+1})^\ast\cdots\ell(J\xi_n)^\ast \; .
\end{equation}
As in \cite[Lemma 3.2]{HR10}, we get that $W(\xi_1,\ldots,\xi_n)\in M$ and
$$W(\xi_1,\ldots,\xi_n)\Omega = \xi_1\otimes_A \cdots\otimes_A \xi_n \; .$$
These elements, with $n\geq1$, span a $\|\cdot\|_2$-dense subspace of $M\ominus A$. Together with $A$, they span a $\|\cdot\|_2$-dense $*$-subalgebra of $M$.

\begin{proposition}[{\cite{Sh97}}]\label{prop1}
The state $\tau(\cdot)=\langle\cdot\,\Omega,\Omega\rangle$ defined above is a faithful trace on $M$.
\end{proposition}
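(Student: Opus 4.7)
The plan is the standard Tomita-type argument, going back to Shlyakhtenko \cite{Sh97} and mirroring Voiculescu's scalar case. I would first construct an antiunitary involution $J_M \colon \cF_A(H) \recht \cF_A(H)$ that will play the role of the modular conjugation associated with $\Omega$: on $L^2(A)$ it is the canonical involution, and on the $n$-th summand it is defined by
$$
J_M(\xi_1 \ot_A \cdots \ot_A \xi_n) = J\xi_n \ot_A \cdots \ot_A J\xi_1 \; .
$$
The main technical point of the proof is checking that this formula passes to the Connes tensor product and defines an isometry. The defining relation $\xi a \ot_A \eta = \xi \ot_A a\eta$ transforms into $J\eta \ot_A a^\ast J\xi = a^\ast J\eta \ot_A J\xi$, which is again a Connes relation thanks to the symmetry axiom $J(a\xi b) = b^\ast (J\xi) a^\ast$; isometry is a bookkeeping computation using the $A$-valued inner products and their behaviour under $J$.

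Once $J_M$ is in place, with $J_M^2 = 1$ and $J_M \Omega = \Omega$, I would verify directly on the generators, using the Wick formula \eqref{eq.wick}, that
$$
J_M \, W(\xi_1,\ldots,\xi_n) a\, \Omega = a^\ast \cdot (J\xi_n \ot_A \cdots \ot_A J\xi_1) = (W(\xi_1,\ldots,\xi_n) a)^\ast \Omega
$$
for all $\xi_i \in \cH$ and $a \in A$, relying on the operator identity $W(\xi_1,\ldots,\xi_n)^\ast = W(J\xi_n,\ldots,J\xi_1)$ read off the Wick formula. A short extension gives $J_M x \Omega = x^\ast \Omega$ on the whole $\|\cdot\|_2$-dense $\ast$-subalgebra $M_0$ generated by $A$ and the Wick products, and in particular $\Omega$ is cyclic for $M$. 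The trace property is then a four-line antiunitary computation: for $x,y \in M_0$,
$$
\tau(xy) = \langle y\Omega, J_M x\Omega\rangle = \overline{\langle J_M y\Omega, x\Omega\rangle} = \langle x\Omega, J_M y\Omega\rangle = \tau(yx) \; ,
$$
and $\tau$ extends by normality to a trace on $M$.

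For faithfulness I would show that $\Omega$ is also \emph{separating}. I introduce the right creation operators $r(\eta)(\xi_1 \ot_A \cdots \ot_A \xi_n) = \xi_1 \ot_A \cdots \ot_A \xi_n \ot_A \eta$ for left $A$-bounded vectors $\eta$, together with the right $A$-action on $\cF_A(H)$; these all commute with every generator of $M$ because in the Connes tensor product, left and right multiplications can be performed in either order. A direct calculation shows that $J_M a J_M$ is right multiplication by $a^\ast$ for $a \in A$, and that $J_M (\ell(\xi) + \ell(J\xi)^\ast) J_M = r(J\xi) + r(\xi)^\ast$. Hence $J_M M J_M \subset M'$. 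Since $\Omega$ is cyclic for $M$, it is cyclic for $J_M M J_M$, hence cyclic for $M'$, and therefore separating for $M$. Thus $\tau(x^\ast x) = \|x\Omega\|^2 = 0$ forces $x = 0$, which is the faithfulness of $\tau$.
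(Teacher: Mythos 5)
Your overall route coincides with the paper's: you build the conjugation $J_M$ on $\cF_A(H)$, prove $J_M x\Omega = x^*\Omega$ via the Wick formula, and aim to get faithfulness from $J_M M J_M \subset M'$ plus cyclicity. (Your derivation of traciality directly from the antiunitarity of $x\Omega \mapsto x^*\Omega$ is a small, correct variation; the paper instead runs the trace computation through the commutation with $\cJ M \cJ$.)

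There is, however, a genuine gap exactly at the commutation step, which is the heart of the argument. You assert that the right creation operators $r(\eta)$, together with the right $A$-action, ``commute with every generator of $M$ because in the Connes tensor product, left and right multiplications can be performed in either order.'' This is true for the right $A$-action but false for the right creation operators: on the vacuum subspace, for $a \in L^2(A)$,
$$\ell(J\xi)^*\, r(\eta)\,(a) = \ell(J\xi)^*(a\eta) = \langle J\xi, a\eta\rangle_A \quad\text{while}\quad r(\eta)\,\ell(J\xi)^*\,(a) = 0 \; ,$$
so $[r(\eta), \ell(\xi)+\ell(J\xi)^*] \neq 0$ in general (take e.g.\ $\eta = J\xi$, $a=1$). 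What your proof needs is that the combinations $r(J\eta)+r(\eta)^*$ commute with the generators $\ell(\xi)+\ell(J\xi)^*$; off the vacuum this is indeed the trivial ``append left/append right'' commutation, but on $L^2(A)$ the two nonzero cross terms must be shown to agree, and this is precisely the identity $\langle \xi, a\eta\rangle_A = {}_A\langle \xi a, \eta\rangle$ for all $a \in A$, whose verification uses the symmetry axiom $J(a\cdot\xi\cdot b)=b^*\cdot J\xi\cdot a^*$. This computation is the main content of the paper's proof that $M$ commutes with $\cJ M \cJ$, and it is missing from your argument; without it the inclusion $J_M M J_M \subset M'$, hence the separating property of $\Omega$ and the faithfulness of $\tau$, is not established. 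Once you insert this vacuum-level cross-term computation, the rest of your proof goes through.
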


\begin{proof}
Define $\cJ\colon \cF_A(H)\to \cF_A(H)$ by $\cJ(a)=a^\ast$ for $a\in A$ and
$$\cJ(\xi_1\otimes_A \cdots \otimes_A \xi_n)=J\xi_n\otimes_A \cdots \otimes_A J\xi_1$$
for $\xi_1,\ldots,\xi_n\in \cH$. Then $\cJ$ is an anti-unitary map satisfying $\cJ^2=1$. One easily checks that $\cJ\ell(\xi)\cJ = r(J\xi)$ for all $\xi\in \cH$ and that $\cJ a\cJ$ is just right multiplication by $a^\ast$ on $\cF_A(H)$. This implies that $\cJ M\cJ$ commutes with $M$. Indeed, for $\xi,\eta\in\cH$ with $J\xi=\xi$ and $J\eta=\eta$, we have $\langle\xi,a\eta\rangle_A = {}_A\langle\xi a,\eta\rangle$ since
\begin{align*}
\langle Jr(\xi a)^\ast r(\eta) J x,y\rangle &= \langle r(\xi a) y^\ast , r(\eta) x^\ast\rangle = \langle y^\ast\xi a, x^\ast\eta \rangle = \langle J(x^\ast\eta), J(y^\ast\xi a)\rangle \\
&= \langle \eta x, a^\ast\xi y\rangle = \langle \ell(\xi)^\ast \ell(a\eta) x,y\rangle \; ,
\end{align*}
for all $x,y\in A$. It follows that

\begin{align*}
(\ell(\xi)^\ast r(\eta) + \ell(\xi)r(\eta)^\ast)(a) = \langle \xi,a\eta\rangle_A = {}_A\langle\xi a,\eta\rangle = (r(\eta)^\ast\ell(\xi) + r(\eta)\ell(\xi)^\ast)(a),\quad \forall a\in A \; .
\end{align*}
Since $\ell(\xi)$ and $r(\eta)^\ast$ clearly commute when restricted to $\cF_A(H)\ominus L^2(A)$, it follows that $\ell(\xi)+\ell(\xi)^\ast$ commutes with $r(\eta)+r(\eta)^\ast$. We conclude that $M$ commutes with $\cJ M\cJ$.

Next, we show that $\cJ(x\Omega) = x^\ast\Omega$ for all $x\in M$. This clearly holds for $x\in A$ so it suffices to prove it for $x$ of the form $x=W(\xi_1,\ldots,\xi_n)$ with $\xi_i\in\cH$. We have
\begin{align*}
\cJ(W(\xi_1,\ldots,\xi_n)\Omega) &= \cJ(\xi_1\otimes_A \cdots\otimes_A \xi_n) = J\xi_n\otimes_A \cdots\otimes_A J\xi_1 \\ &= W(J\xi_n,\ldots,J\xi_1)\Omega = W(\xi_1,\ldots,\xi_n)^\ast\Omega \; .
\end{align*}
We now get that
\begin{align*}
\tau(xy) &= \langle xy\Omega,\Omega\rangle = \langle x\cJ(y^\ast\Omega),\Omega\rangle = \langle x\cJ y^\ast\cJ\Omega,\Omega\rangle = \langle \cJ y^\ast\cJ x\Omega,\Omega\rangle\\
&= \langle x\Omega,\cJ y\cJ\Omega\rangle = \langle x\Omega,y^\ast\Omega\rangle = \langle yx\Omega,\Omega\rangle = \tau(yx) \; ,
\end{align*}
for all $x,y\in M$ and hence $\tau$ is a trace.

It is easy to check that $\Omega\in\cF_A(H)$ is a cyclic vector for both $M$ and $\cJ M\cJ$. Hence $\Omega$ is also separating for $M$ and it follows that $\tau$ is faithful.
\end{proof}

By construction, we have that $L^2(M)\cong\cF_A(H)$ as $A$-bimodules.

In \cite{Sh97}, Shlyakhtenko used the terminology \textit{$A$-valued semicircular system} for the family $\{\ell(\xi)+\ell(\xi)^\ast\mid\xi\in \cH, J\xi=\xi\}$, as an analogue to the free Gaussian functor case, where the operator $\ell(\xi)+\ell(\xi)^\ast$ has a semicircular distribution with respect to $\tau$.

\begin{example}\phantomsection\label{ex2}
\begin{enumlist}
\item When $H=L^2(A)$ is the trivial $A$-bimodule with $J(a) = a^*$, we simply get
$$\Gamma(H,J,A,\tau)''=A \ovt L^\infty[0,1] \; .$$
Indeed, $A$ commutes with $\ell(\hat{1})+\ell(\hat{1})^\ast$ and they together generate $\Gamma(H,J,A,\tau)''$. In particular, we see that $\Gamma(H,J,A,\tau)''$ is not always a factor.

\item When $H=L^2(A)\otimes L^2(A)$ is the coarse $A$-bimodule with $J(a \ot b) = b^* \ot a^*$, we get
$$\Gamma(H,J,A,\tau)''=(A,\tau)\ast L^\infty[0,1] \; .$$
This example shows that the construction of $\Gamma(H,J,A,\tau)''$ may depend on the trace on $A$. Indeed, if $A=\C^2$ we can consider the trace $\tau_\delta$ for any $\delta\in(0,1)$ given by $\tau_\delta(a,b) = \delta a + (1-\delta)b$, $a,b\in\C$. By \cite[Lemma 1.6]{Dy92} we have that $L(\Z)\ast (A,\tau_\delta)=L(\F_{1+2\delta(1-\delta)})$, the interpolated free group factor. It is wide open whether the interpolated free group factors are all isomorphic. So at least, there is no obvious isomorphism between $\Gamma(H,J,A,\tau_{\delta_1})''$ and $\Gamma(H,J,A,\tau_{\delta_2})''$ for $\delta_1\neq\delta_2$. In Example \ref{exam.factor}, we shall actually see that even the factoriality of $\Gamma(H,J,A,\tau)''$ may depend on the choice of the trace $\tau$. For a general factoriality criterion for $\Gamma(H,J,A,\tau)\dpr$, see Theorem \ref{thm.absence-Cartan}.
\end{enumlist}
\end{example}

Note that the construction of $\Gamma(H,J,A,\tau)''$ is functorial in the following sense. If $U\in\cU(H)$ is a unitary operator that is $A$-bimodular and commutes with $J$, then $U$ defines a trace-preserving automorphism of $M=\Gamma(H,J,A,\tau)''$ in the following way. Since $U$ is $A$-bimodular, we can define a unitary $U^n$ on $H^{\otimes_A^n}$ by $U^n(\xi_1\otimes_A\cdots\otimes_A\xi_n) = (U\xi_1\otimes_A\cdots\otimes_A U\xi_n)$. The direct sum of these unitaries (and the identity on $L^2(A)$) then gives an $A$-bimodular unitary operator on $\cF_A(H)$, which we will still denote by $U$. Note that $U\ell(\xi)U^\ast=\ell(U\xi)$ for all $\xi\in \cH$. Since $U$ commutes with $J$, it follows that $UMU^\ast=M$ so that $\Ad U$ defines an automorphism of $M$.

Recall that for Voiculescu's free Gaussian functor, we have that the direct sum of Hilbert spaces translates into the free product of von Neumann algebras, in the sense that $\Gamma(H_1\oplus H_2) = \Gamma(H_1)\ast\Gamma(H_2)$. In the setting of $A$-bimodules in general, we instead get the amalgamated free product over $A$ as stated in the following proposition.

\begin{proposition}[{\cite[Proposition 2.17]{Sh97}}] \label{prop2}
Let $(H_1,J_1)$ and $(H_2,J_2)$ be symmetric $A$-bi\-mod\-ules. Put $H=H_1\oplus H_2$ and $J=J_1\oplus J_2$. Then
$$\Gamma(H,J,A,\tau)''\cong\Gamma(H_1,J_1,A,\tau)''\ast_A\Gamma(H_2,J_2,A,\tau)'' \; ,$$
with respect to the unique trace-preserving conditional expectation onto $A$.
\end{proposition}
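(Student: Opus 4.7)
My plan is to identify the full Fock space $\cF_A(H_1\oplus H_2)$ with the Hilbert module on which the amalgamated free product $M_1*_A M_2$ is represented in standard form, where $M_k := \Gamma(H_k,J_k,A,\tau)''$, and then to check that the generators coming from $H_1$ and $H_2$ act on $\cF_A(H_1\oplus H_2)$ in the way prescribed by the free product structure. The first task is easy: for each $k\in\{1,2\}$, the inclusion $\iota_k\colon H_k\hookrightarrow H_1\oplus H_2$ is $A$-bimodular and intertwines $J_k$ with $J_1\oplus J_2$, so by the same functoriality argument as the one preceding Proposition~\ref{prop2} it extends to an $A$-bimodular isometry $\cF_A(H_k)\hookrightarrow\cF_A(H_1\oplus H_2)$ sending vacuum to vacuum and intertwining the creation operators. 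Combined with Proposition~\ref{prop1}, this yields a trace-preserving inclusion $M_k\subset M$, where $M:=\Gamma(H_1\oplus H_2,J_1\oplus J_2,A,\tau)''$.

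Next I would use distributivity of the Connes tensor product to decompose
\begin{equation*}
(H_1\oplus H_2)^{\otimes_A n}=\bigoplus_{(i_1,\ldots,i_n)\in\{1,2\}^n} H_{i_1}\otimes_A\cdots\otimes_A H_{i_n}.
\end{equation*}
Grouping consecutive equal indices into maximal runs, each summand can be rewritten uniquely as $H_{j_1}^{\otimes_A m_1}\otimes_A\cdots\otimes_A H_{j_k}^{\otimes_A m_k}$ for an alternating sequence $j_1\neq\cdots\neq j_k$ in $\{1,2\}$ and positive integers $m_1,\ldots,m_k$. Summing over all $m_\ell\geq 1$ inside each slot, and setting $\cF_A(H_i)^\circ:=\bigoplus_{m\geq 1} H_i^{\otimes_A m}=L^2(M_i)\ominus L^2(A)$, one obtains a canonical $A$-bimodular unitary
\begin{equation*}
U\colon L^2(A)\oplus\bigoplus_{k\geq 1}\bigoplus_{j_1\neq\cdots\neq j_k}\cF_A(H_{j_1})^\circ\otimes_A\cdots\otimes_A\cF_A(H_{j_k})^\circ\;\longrightarrow\;\cF_A(H_1\oplus H_2)
\end{equation*}
which sends vacuum to vacuum. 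The left-hand side is precisely Voiculescu's standard Hilbert module on which the amalgamated free product $M_1*_A M_2$ is faithfully represented.

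The key remaining step, and main technical obstacle, is to show that $U$ intertwines the embedding $M_k\subset M$ constructed above with the standard action of $M_k$ on this free-product Hilbert module. It is enough to do this on generators of the form $\ell(\xi)+\ell(\xi)^\ast$ with $\xi\in H_k$ left and right $A$-bounded and $J_k\xi=\xi$. On a pure tensor $\eta_1\otimes_A\cdots\otimes_A \eta_n$ with $\eta_1\in H_{i_1}$, the creation operator $\ell(\xi)$ prolongs the existing $H_k$-block if $i_1=k$ and starts a new $H_k$-block if $i_1\neq k$, while the annihilation operator $\ell(\xi)^\ast$ vanishes when $i_1\neq k$ and contracts the first tensor factor via $\langle\xi,\eta_1\rangle_A$ when $i_1=k$. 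Both rules coincide exactly with the recipe for how a generator of $M_k$ acts in the standard representation of $M_1*_A M_2$. Once this matching is checked, $M$ and $M_1*_A M_2$ admit isomorphic faithful standard representations compatible with the embeddings of $A$, $M_1$ and $M_2$, and the vector state $\omega_\Omega$ on $M$ corresponds under $U$ to the unique trace-preserving conditional expectation onto $A$, completing the proof.
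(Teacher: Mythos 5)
Your argument is correct, and it is essentially the proof the paper leaves implicit: the paper gives no proof of Proposition \ref{prop2}, simply citing \cite[Proposition 2.17]{Sh97}, and what you write is the standard argument from that reference — decompose $\cF_A(H_1\oplus H_2)$ into alternating blocks $\cF_A(H_{j_1})^\circ\otimes_A\cdots\otimes_A\cF_A(H_{j_k})^\circ$, identify this with the standard Hilbert module of $M_1\ast_A M_2$ via $L^2(M_i)\cong\cF_A(H_i)$, and check on the generators $\ell(\xi)+\ell(\xi)^*$ (together with $A$, which acts compatibly since $U$ is $A$-bimodular) that the two actions agree, so that the vacuum-preserving unitary carries $M$ onto $M_1\ast_A M_2$ together with the trace-preserving conditional expectations onto $A$.
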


\begin{remark}\label{rem.bogol-crossed}
As we recalled in the beginning of this section, to every orthogonal representation $\pi : G \recht O(K_\R)$ of a countable group $G$ on a real Hilbert space $K_\R$ is associated the free Bogoljubov action $\si_\pi : G \actson \Gamma(K_\R)\dpr$. Write $A = L(G)$ and equip $A$ with its canonical tracial state $\tau$. Denote by $K$ the complexification of $K_\R$ and define the symmetric $A$-bimodule $\bim{A}{H}{A}$ given by
\begin{equation}\label{eq.bimodule-bogol}
\begin{split}
H = \ell^2(G) \ot K \quad & \text{with}\quad u_g \cdot (\delta_h \ot \xi) \cdot u_k = \delta_{ghk} \ot \pi(g) \xi \\ & \text{and}\quad J(\delta_h \ot \xi) = \delta_{h^{-1}} \ot \pi(h^{-1}) \overline{\xi}
\end{split}
\end{equation}
where $(\delta_g)_{g \in G}$ denotes the canonical orthonormal basis of $\ell^2(G)$. It is now straightforward to check that there is a canonical trace preserving isomorphism
$$\Gamma(H,J,A,\tau)\dpr \cong \Gamma(K_\R)\dpr \rtimes^{\si_\pi} G$$
that maps $A$ onto $L(G)$ identically.
\end{remark}

\begin{example}\label{exam.factor}
This final example illustrates that even the factoriality of $\Gamma(H,J,A,\tau)\dpr$ may depend on the choice of $\tau$. 
Take $A = \C^2$, $\al \in \Aut(A)$ the flip automorphism and $H = \C^2$ with $A$-bimodule structure given by $a \cdot \xi \cdot b = \al(a) \xi b$. Define $J : H \recht H : J(a) = \al(a)^*$. The $n$-fold tensor power $H^{\ot_A^n}$ can be identified with $\C^2$ with the bimodule structure given by
$$a \cdot \xi \cdot b = \begin{cases} a \xi b &\;\;\text{if $n$ is even,} \\ \al(a) \xi b &\;\;\text{if $n$ is odd.}\end{cases}$$
We denote by $\{e_n,f_n\}$ the canonical orthonormal basis of $H^{\ot_A^n}$ under this identification. For every $0 < \delta < 1$, denote by $\tau_\delta$ the trace on $A$ given by $\tau_\delta(a,b) = \delta a + (1-\delta)b$. With respect to the canonical trace $\tau = \tau_{1/2}$, the left and right creation operators associated with the identity $1 \in A = H$ then become
$$\ell(e_n) = e_{n+1} \;\; , \;\; \ell(f_n) = f_{n+1} \;\; , \;\; r(e_n) = f_{n+1} \;\; , \;\; r(f_n) = e_{n+1} \;\;,$$
for all $n \geq 0$.

By symmetry, it suffices to consider the case $0 < \delta \leq 1/2$. With respect to the trace $\tau_\delta$, the left and right creation operators $\ell_\delta$ and $r_\delta$ can be realized on the same Hilbert space and are given by $$\ell_\delta = \ell \, \lambda(D^{-1/2}) \quad\text{and}\quad r_\delta = r \, \rho(D^{-1/2}) \;\;,$$
where $D = (2\delta,2(1-\delta))$ is the Radon-Nikodym derivative between $\tau_\delta$ and $\tau_{1/2}$ and where we denote by $\lambda(\,\cdot\,)$ and $\rho(\,\cdot\,)$ the left, resp.\ right, action of $A$. Then,
$$M_\delta := \Gamma(H,J,A,\tau_\delta)\dpr = \lambda(A) \vee \{\ell_\delta + \ell_\delta^*\}\dpr = \lambda(A) \vee \{S_\delta\}\dpr \; \; ,$$
where $S_\delta = \ell \lambda(\Delta^{-1/4}) + \ell^* \lambda(\Delta^{1/4})$ and $\Delta = (\delta/(1-\delta),(1-\delta)/\delta)$. We still denote by $\tau_\delta$ the canonical trace on $M_\delta$.

Note that $S_\delta = S_\delta^*$. Denoting by $e=(1,0)$ and $f = (0,1)$ the minimal projections in $A$, we have that $S_\delta e = f S_\delta$. When $\delta = 1/2$, the operator $S_\delta$ is nonsingular and diffuse. When $0 < \delta < 1/2$, the kernel of $S_\delta$ has dimension $1$ and $S_\delta$ is diffuse on its orthogonal complement. We denote by $z_\delta$ the projection onto the kernel of $S_\delta$. Then $z_\delta$ is a minimal and central projection in $M_\delta$ with $\tau_\delta(z_\delta) = 1-2\delta$. We conclude that there is a trace preserving $*$-isomorphism
\begin{equation}\label{eq.my-iso}
(M_\delta,\tau_\delta) \cong \; \underbrace{M_2(\C) \ot B}_{\delta (\Tr \ot \tau_0)} \; \oplus \;  \underbrace{\C}_{1-2\delta} \;\;
\end{equation}
where $(B_0,\tau_0)$ is a diffuse abelian von Neumann algebra with normal faithful tracial state $\tau_0$ and where we emphasized the choice of trace at the right hand side. Under the isomorphism \eqref{eq.my-iso}, we have that
$$e \mapsto (e_{11} \ot 1) \oplus 0 \;\;,\;\; f \mapsto (e_{22} \ot 1) \oplus 1 \;\;,\;\; S_\delta \mapsto ((e_{12} + e_{21}) \ot b) \oplus 0 \;\;,\;\; z_\delta \mapsto 0 \oplus 1 \;\;$$
where $b \in B$ is a positive nonsingular element generating $B$.

Next, taking $H \oplus H$ and $J \oplus J$, it follows from Proposition \ref{prop2} that
$$\cM_\delta := \Gamma(H \oplus H,J \oplus J,A,\tau_\delta)\dpr = M_\delta \ast_A M_\delta \;\;,$$
where we used at the right hand side the amalgamated free product w.r.t.\ the unique $\tau_\delta$-preserving conditional expectations. We denote with superscripts $^{(1)}$ and $^{(2)}$ the elements of $M_\delta$ viewed in the first, resp.\ second copy of $M_\delta$ in the amalgamated free product. Note that $f^{(1)} = f^{(2)}$ and that, denoting this projection as $f$, we get that $f M_\delta^{(1)} f$ and $f M_\delta^{(2)} f$ are free inside $f \cM_\delta f$. It then follows from \cite{Vo86} that the projection $z := z_\delta^{(1)} \wedge z_\delta^{(2)}$ is nonzero if and only if $\delta < 1/3$. Using the diffuse subalgebras $B^{(1)}$ and $B^{(2)}$, we get that $\cZ(\cM_\delta) = \C z + \C (1-z)$. We conclude that $\Gamma(H \oplus H,J \oplus J,A,\tau_\delta)\dpr$ is a factor if and only if $1/3 \leq \delta \leq 2/3$.
\end{example}

\section{\boldmath Normalizers and (relative) strong solidity}

The main result of this section is the following dichotomy theorem for $A$-valued semicircular systems. In the special case of free Bogoljubov crossed products (see Remark \ref{rem.bogol-crossed}), this result was proven in \cite[Theorem B]{Ho12b}. As explained in the introduction, the $A$-valued semicircular systems fit perfectly into Popa's deformation/rigidity theory. The proof of Theorem \ref{thm1} therefore follows closely \cite{IPP05,HS09,HR10,Io12,Ho12b}, using in the same way Popa's $s$-malleable deformation given by ``doubling and rotating'' the initial $A$-bimodule $\bim{A}{H}{A}$ (see below).

We freely use Popa's intertwining-by-bimodules (see \cite[Section 2]{Po03}) and the notion of relative amenability (see \cite[Section 2.2]{OP07}).

\begin{theorem}\label{thm1}
Let $(A,\tau)$ be a tracial von Neumann algebra and $(H,J)$ a symmetric $A$-bimodule. Put $M = \Gamma(H,J,A,\tau)\dpr$. Let $q\in M$ be a projection and $B\subset qMq$ a von Neumann subalgebra. If $B$ is amenable relative to $A$, then at least one of the following statements holds: $B\prec_M A$ or $\cN_M(B)''$ stays amenable relative to $A$.
\end{theorem}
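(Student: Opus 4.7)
The proof follows Popa's deformation/rigidity strategy, in the spirit of \cite{IPP05,Io12,Ho12b}, using the ``doubling and rotating'' $s$-malleable deformation adapted to Shlyakhtenko's construction. Consider the doubled symmetric $A$-bimodule $(H \oplus H, J \oplus J)$ and put $\tilde{M} = \Gamma(H \oplus H, J \oplus J, A, \tau)\dpr$, which by Proposition \ref{prop2} is the amalgamated free product $M *_A M'$ of two copies of $M$, with $M$ embedded via the first summand. The $A$-bimodular unitary rotations
$$V_t = \begin{pmatrix} \cos(\pi t/2) & -\sin(\pi t/2) \\ \sin(\pi t/2) & \cos(\pi t/2) \end{pmatrix} \in B(H \oplus H)$$
commute with $J \oplus J$, so by the functoriality noted after Definition \ref{def.A-semi} they induce a strongly continuous one-parameter group $(\alpha_t)$ of trace-preserving automorphisms of $\tilde{M}$ fixing $A$ pointwise. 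The $A$-bimodular involution $\mathrm{diag}(1,-1)$ on $H\oplus H$ similarly induces $\tilde{\beta} \in \Aut(\tilde{M})$ with $\tilde{\beta}^2 = \id$, $\tilde{\beta}|_M = \id$ and $\tilde{\beta} \alpha_t \tilde{\beta} = \alpha_{-t}$; a direct computation then yields the transversality identity $\|\alpha_{2t}(x) - x\|_2 = 2 \|\alpha_t(x) - E_M(\alpha_t(x))\|_2$ for all $x \in M$.

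Next, I unpack the bimodular structure relevant for the spectral gap step. By the AFP decomposition $\tilde{M} = M *_A M'$ and a reduced-word analysis, $L^2(\tilde{M}) \ominus L^2(M)$ decomposes as a direct sum of $M$-$M$-subbimodules each contained in $L^2(M) \otimes_A \cK \otimes_A L^2(M)$ for some auxiliary $A$-bimodule $\cK$; in particular it is weakly contained, as an $M$-$M$-bimodule, in the ``coarse over $A$'' bimodule $L^2(\langle M,e_A\rangle)$. Combining this with transversality via the Popa--Ioana spectral gap / transfer lemma (see \cite[Theorem~3.1]{Io12} and the free-Bogoljubov version in \cite[Section~4]{Ho12b}), the relative amenability of $B \subset qMq$ over $A$ produces the dichotomy that either $B \prec_M A$, or $\alpha_t \to \id$ uniformly on $(B)_1$ in $\|\cdot\|_2$ as $t \to 0$.

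Assume the second alternative holds. I extend uniform convergence to $P := \cN_{qMq}(B)\dpr$ by a standard averaging argument: for any $u \in \cN_{qMq}(B)$, the element $w_t = u^* \alpha_t(u) \in \tilde{M}$ normalizes $\alpha_t(B)$ and is uniformly close to $1$ in $\|\cdot\|_2$ by the hypothesis on $B$; transversality together with the involution $\tilde{\beta}$ then upgrade this, exactly as in \cite{Po06,Io12,Ho12b}, to $\sup_{x \in (P)_1} \|\alpha_t(x) - x\|_2 \to 0$ as $t \to 0$. This uniform convergence implies that the standard $P$-$P$-bimodule $L^2(qMq)$ is weakly contained in $L^2(\tilde{M})$ as a $P$-$P$-bimodule, hence, via the bimodular weak containment of the preceding paragraph, in $L^2(\langle M, e_A\rangle)$ as a $P$-$P$-bimodule. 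By the definition of relative amenability (see \cite[Section~2.2]{OP07}), this is exactly the statement that $\cN_M(B)\dpr = P$ is amenable relative to $A$ inside $M$.

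The main obstacle is the spectral gap step in the second paragraph: although Proposition \ref{prop2} formally embeds us in an AFP setting, one must carefully exploit the Fock-space filtration of $L^2(\tilde{M})$ to establish the required $M$-$M$-bimodular weak containment and combine it with the transversality for $\alpha_t$, checking that the free-Bogoljubov-crossed-product arguments of \cite{Ho12b} still function at the level of general symmetric $A$-bimodules. The averaging step extending uniform convergence to the normalizer, and the final weak-containment-to-amenability step, follow well-established templates and should be technically routine.
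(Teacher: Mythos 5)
Your deformation is exactly the one used in the paper ($\cM = \Gamma(H\oplus H, J\oplus J, A,\tau)\dpr = M \ast_A M$, the rotations $\theta_t$ and the involution $\beta$), but the pivotal step in your second paragraph is not correct. The claimed dichotomy ``either $B \prec_M A$ or $\alpha_t \recht \id$ uniformly on $(B)_1$'' does not follow from relative amenability of $B$ over $A$, and is in fact false: uniform convergence on $(B)_1$ already forces $B \prec_M A$, since for small $t$ the quantity $\|E_{\pi_1(M)}(\theta_t(u))\|_2$ is bounded below uniformly over $u \in \cU(B)$, so $\theta_t(B) \prec_\cM \pi_1(M)$, and then the tensor-length/convexity/$\beta$-trick argument (Lemma \ref{lem1} of the paper) gives $B \prec_M A$. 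Thus your two alternatives are nested rather than complementary, and the ``dichotomy'' would assert that every $B$ amenable relative to $A$ embeds into $A$; taking $A = \C$, $M = L(\F_2)$ and $B$ the (diffuse, amenable) generator MASA already contradicts this. The spectral gap/transfer arguments you invoke need a nonamenability hypothesis relative to the legs (on $B' \cap qMq$ or on the normalizer), which is precisely what is unavailable here. Relatedly, the logic of your third paragraph is inverted: uniform convergence of $\alpha_t$ on $(P)_1$ would yield the rigidity conclusion $P \prec_M A$, not relative amenability of $P$; and your weak-containment chain is broken because $L^2(\cM)$ contains $L^2(M)$ as an $M$-$M$-subbimodule, which is not weakly contained in $L^2(\langle M, e_A\rangle)$ unless $M$ itself is amenable relative to $A$, so one cannot pass from the trivial containment $L^2(qMq) \subset L^2(\cM)$ to weak containment in $L^2(\langle M,e_A\rangle)$.

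For comparison, the paper's route is: for fixed $t$, apply Vaes' normalizer theorem for amalgamated free products \cite[Theorem A]{Va13} to $\theta_t(B) \subset \cM = M \ast_A M$, which is amenable relative to $\pi_1(M)$ (this is where the hypothesis on $B$ enters) and is normalized by $\theta_t(P)$ with $P = \cN_{qMq}(B)\dpr$. This yields a trichotomy: $\theta_t(B) \prec_\cM A$, or $\theta_t(P) \prec_\cM \pi_i(M)$, or $\theta_t(P)$ is amenable relative to $A$ inside $\cM$; the third alternative, missing from your dichotomy, is the one that carries the content of the theorem. The first two are converted into $B \prec_M A$ by Lemma \ref{lem1} (Fock-space estimates as in \cite{IPP05,Ho12b}, Popa's convexity argument, the $\beta$-trick). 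If $B \not\prec_M A$, the third alternative holds for all $t$, and Lemma \ref{lem2} transfers it back to $M$: choosing almost-central, almost-tracial vectors as in \cite[Theorem 2.1]{OP07} for $\theta_t(P)$ and letting $t \recht 0$ gives that $\bim{qMq}{L^2(qMq)}{P}$ is weakly contained in $\bim{qMq}{(qL^2(\cM)\ot_A L^2(\cM)q)}{P}$, and the identification $\bim{M}{L^2(\cM)}{A} \cong \bim{M}{(L^2(M)\ot_A \cK)}{A}$ together with \cite[Proposition 2.4]{PV11} then shows that $P$ is amenable relative to $A$ inside $M$. To repair your outline you would have to reprove a statement of the strength of \cite[Theorem A]{Va13} (or the corresponding results of \cite{Io12}); it cannot be replaced by a two-alternative spectral gap statement at the level of $B$.
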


As a consequence of Theorem \ref{thm1}, we get the following strong solidity theorem.

\begin{corollary}\label{cor.rel-strong-solid}
Let $(A,\tau)$ be a tracial von Neumann algebra and $(H,J)$ a symmetric $A$-bimodule. Denote $M = \Gamma(H,J,A,\tau)\dpr$. Assume that $\bim{A}{H}{A}$ is mixing.

If $B \subset M$ is a diffuse von Neumann subalgebra that is amenable relative to $A$, then $\cN_M(B)\dpr$ stays amenable relative to $A$.

So if $A$ is amenable and $\bim{A}{H}{A}$ is mixing, we get that $M$ is strongly solid.
\end{corollary}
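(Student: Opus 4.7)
The plan is to combine the dichotomy from Theorem \ref{thm1} with the standard mixing-normalizer machinery. First observe that if $\bim{A}{H}{A}$ is mixing, then so is each Connes tensor power $\bim{A}{H^{\ot_A^n}}{A}$ (tensor products of mixing bimodules are mixing), and since mixing is stable under countable direct sums the $A$-bimodule
$$\bim{A}{L^2(M)\ominus L^2(A)}{A} \;\cong\; \bigoplus_{n\geq 1} H^{\ot_A^n}$$
is mixing. Hence $A\subset M$ is a mixing inclusion in Popa's sense.

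Now let $B\subset M$ be diffuse and amenable relative to $A$. Applying Theorem \ref{thm1} with $q=1$ gives two alternatives: either $\cN_M(B)\dpr$ is already amenable relative to $A$ -- which is exactly the desired conclusion -- or else $B\prec_M A$. In the second case, I invoke the standard mixing-normalizer principle: for a mixing inclusion $A\subset M$, any diffuse subalgebra of $M$ that intertwines into $A$ has its normalizer intertwining into $A$ as well. Concretely, $B\prec_M A$ produces an intertwining triple $(p,v,\theta)$ with $\theta(pBp)\subset qAq$ diffuse; the mixing lemma then yields $\cN_{qMq}(\theta(pBp))\dpr\subset qAq$, which is trivially amenable relative to $A$. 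Transporting this via $v$ and combining with the $B$-central state on $\langle M, e_A\rangle$ furnished by the relative amenability assumption on $B$, one assembles (as in \cite{IPP05,Ho12b}) a $\cN_M(B)\dpr$-central state on $\langle M, e_A\rangle$ extending the trace on $M$, proving that $\cN_M(B)\dpr$ is amenable relative to $A$.

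The main technical obstacle is this last assembly: in general, intertwining of an algebra $P$ into $A$ is strictly weaker than relative amenability of $P$, and one needs the mixing hypothesis together with the diffuseness of $B$ to control normalizing unitaries and prevent them from ``escaping'' into a non-$A$ direction of $M$. Once this step is in place, the strong solidity assertion is immediate: if moreover $A$ is amenable, any diffuse amenable $B\subset M$ is automatically amenable relative to $A$, the first part gives $\cN_M(B)\dpr$ amenable relative to $A$, and amenability of $A$ promotes this to absolute amenability of $\cN_M(B)\dpr$, so $M$ is strongly solid.
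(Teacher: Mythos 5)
Your use of Theorem \ref{thm1} with $q=1$ is fine in the branch where it returns that $\cN_M(B)\dpr$ is amenable relative to $A$, and the final deduction of strong solidity from the first statement is correct (as is the preliminary observation that mixing of $\bim{A}{H}{A}$ makes $A \subset M$ a mixing inclusion, which the paper also asserts). The genuine gap is in the other branch, $B \prec_M A$. There you only obtain corner information: an intertwining triple gives one projection $p\in B$, one partial isometry $v$ and $\theta(pBp)\subset qAq$, and the mixing principle applied to the image only yields $\cN_{qMq}(\theta(pBp))\dpr\subset qAq$; transporting by $v$ gives at best $\cN_M(B)\dpr \prec_M A$. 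As you yourself concede, intertwining of $P:=\cN_M(B)\dpr$ into $A$ is strictly weaker than relative amenability of $P$ (it says nothing about the direct summands of $P$ not seen by the intertwining projection, and indeed $Bz$ need not embed into $A$ for central projections $z\in\cZ(P)$ even when $B\prec_M A$), and the concluding ``assembly'' of a $P$-central state on $\langle M,e_A\rangle$ from the $B$-central state is precisely the missing argument; nothing in \cite{IPP05,Ho12b} supplies it in this generality. So as written the case $B\prec_M A$ is not handled.

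The paper closes this case by localizing on $\cZ(P)$ instead of applying Theorem \ref{thm1} to $B$ itself: let $z\in\cZ(P)$ be the smallest projection with $Pz\not\prec_M A$. Then $P(1-z)$ \emph{fully} embeds into $A$, and full embedding (unlike mere intertwining) does imply amenability relative to $A$. On the other summand the mixing lemma is used in the contrapositive direction: since $A\subset M$ is mixing, $Bz$ is diffuse, $\cN_{zMz}(Bz)\dpr=Pz$ and $Pz\not\prec_M A$, Ioana's result \cite[Lemma 9.4]{Io12} gives $Bz\not\prec_M A$; applying Theorem \ref{thm1} to $Bz$ (not to $B$) then lands in the good branch and yields that $Pz$ is amenable relative to $A$ directly from the theorem, so no upgrade from intertwining to relative amenability is ever needed. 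To repair your argument you would have to insert this central-projection decomposition and the contrapositive use of the mixing lemma; the direct ``transport via $v$ and assemble a central state'' step does not go through.
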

\begin{proof}
Denote $P:=\cN_M(B)''$. Since $B \vee (B' \cap M) \subset P$, we have $P' \cap M = \cZ(P)$. Denote by $z \in \cZ(P)$ the smallest projection such that $P z \not\prec_M A$. Then, $P(1-z)$ fully embeds into $A$ inside $M$ and, in particular, $P(1-z)$ is amenable relative to $A$. It remains to prove that also $Pz$ is amenable relative to $A$.

Since the bimodule $\bim{A}{H}{A}$ is mixing, the inclusion $A \subset M$ is mixing in the sense of \cite[Proof of Theorem 3.1]{Po03} and \cite[Definition 9.2]{Io12}. Since $\cN_{zMz}(Bz)\dpr = Pz$, since $Bz$ is diffuse and since $Pz \not\prec_M A$, it follows from \cite[Lemma 9.4]{Io12} that $Bz \not\prec_M A$. It then follows from Theorem \ref{thm1} that $Pz$ is amenable relative to $A$.
\end{proof}

To prove Theorem \ref{thm1}, we fix a tracial von Neumann algebra $(A,\tau)$ and a symmetric $A$-bimodule $(H,J)$. Put $M=\Gamma(H,J,A,\tau)''$ as in Definition \ref{def.A-semi}. Recall that $L^2(M)=\cF_A(H)=L^2(A)\oplus\bigoplus_{n=1}^\infty H^{\otimes_A^n}$.

We construct as follows an $s$-malleable deformation of $M$ in the sense of \cite{Po03}. Put
$$\cM = \Gamma(H\oplus H, A, J\oplus J)'' \; .$$
By Proposition \ref{prop2}, we have $\cM = M\ast_A M$. We denote by $\pi_1$ and $\pi_2$ the two canonical embeddings of $M$ into $\cM$. When no embedding is explicitly mentioned, we will always consider $M\subset\cM$ via the embedding $\pi_1$.

Let $U_t\in\cU(H\oplus H)$, $t\in\R$, be the rotation with angle $t$, i.e.,
$$U_t(\xi,\eta)=(\cos(t)\xi-\sin(t)\eta, \sin(t)\xi+\cos(t)\eta)\quad\text{for }\xi,\eta\in H \; .$$
Since the construction of $\Gamma(H,J,A,\tau)''$ is functorial, this gives rise to an automorphism $\theta_t:=\Ad U_t\in\Aut(\cM)$. Note that $\theta_{\pi/2}\circ\pi_1=\pi_2$.

Define $\beta\in\cU(H)$ by $\beta(\xi,\eta)=(\xi,-\eta)$ for $\xi,\eta\in H$. Again by functoriality, we have that $\beta$ defines an automorphism of $\cM$. Now, $\beta$ satisfies $\beta(x)=x$ for all $x\in\pi_1(M)$, $\beta^2=\id$ and $\beta\circ\theta_t=\theta_{-t}\circ\beta$ for all $t$. Hence $(\cM,(\theta_t)_{t\in\R})$ is an $s$-malleable deformation of $M$.

The following two lemmas are the key ingredients in the proof of Theorem \ref{thm1}.

\begin{lemma}\label{lem1}
Let $q\in M$ be a projection and $P\subset qMq$ a von Neumann subalgebra. If $\theta_t(P)\prec_\cM\pi_i(M)$ for some $i\in\{1,2\}$ and some $t\in(0,\frac{\pi}{2})$, then $P\prec_M A$.
\end{lemma}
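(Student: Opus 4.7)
The plan is to apply Popa's $s$-malleable deformation machinery, as adapted to amalgamated free products in \cite{IPP05,Io12,Ho12b}. The proof has three ingredients.

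\emph{Symmetry reduction to $i=1$.} Since $\pi_2(M)=\theta_{\pi/2}(M)$, applying $\theta_{-\pi/2}\in\Aut(\cM)$ transforms $\theta_t(P)\prec_\cM\pi_2(M)$ into $\theta_{t-\pi/2}(P)\prec_\cM M$. Since $\beta|_M=\id$ fixes $P\subset M$, preserves $M$, and satisfies $\beta\theta_s=\theta_{-s}\beta$, applying it next yields $\theta_{\pi/2-t}(P)\prec_\cM M$ with $\pi/2-t\in(0,\pi/2)$. So it suffices to treat $i=1$.

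\emph{Popa's transversality inequality.} I would first establish
$$\|\theta_{2t}(x)-x\|_2\leq 2\|\theta_t(x)-E_M(\theta_t(x))\|_2,\qquad x\in M,\ t\in\R.$$
Setting $y=\theta_t(x)$, the identity $\theta_t\beta(y)=\beta\theta_{-t}(y)=\beta(x)=x$ gives $\theta_{2t}(x)-x=\theta_t(y)-\theta_t\beta(y)$. Writing $y=E_M(y)+(y-E_M(y))$ and using $\beta|_M=\id$ to cancel the $E_M(y)$-part, one obtains $\theta_{2t}(x)-x=\theta_t\bigl((y-E_M(y))-\beta(y-E_M(y))\bigr)$, and the triangle inequality finishes.

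\emph{Angle doubling and amalgamated-free-product conclusion.} Assume $\theta_t(P)\prec_\cM M$ with $t\in(0,\pi/2)$. By Popa's intertwining criterion there is a nonzero partial isometry $v\in\cM$ implementing the embedding of a corner of $\theta_t(P)$ into $M$. Applying $\beta$ produces a companion intertwiner $\beta(v)$, now implementing $\theta_{-t}(P)\prec_\cM M$. Combining $v$ and $\beta(v)$ in the standard doubling construction (cf.\ \cite[Theorem 4.3]{IPP05}), the transversality inequality guarantees that the candidate element $\theta_t(v\beta(v)^*)$ has a nontrivial $M$-valued ``intertwining projection'', and iterating this doubling one pushes the angle all the way up to $\pi/2$, giving $\pi_2(P)=\theta_{\pi/2}(P)\prec_\cM\pi_1(M)$. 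Finally, in the amalgamated free product $\cM=\pi_1(M)*_A\pi_2(M)$, any subalgebra $Q\subset\pi_2(M)$ satisfying $Q\prec_\cM\pi_1(M)$ must already satisfy $Q\prec_{\pi_2(M)}A$, by the standard intertwining principle for amalgamated free products (essentially: the $\pi_1(M)$-$\pi_2(M)$-bimodule $L^2(\cM)\ominus L^2(A)$ is coarse relative to $A$). Applied to $Q=\pi_2(P)$, this yields $P\prec_M A$.

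The main obstacle is the angle-doubling step: transversality alone is insufficient, and one has to exploit the full $s$-malleability (through the involution $\beta$, not just the deformation $(\theta_t)$) to guarantee that the intertwiners produced at each stage are genuinely nonzero. This is the technical heart of Popa's $s$-malleable deformation method and the reason for singling out $s$-malleability as the crucial hypothesis.
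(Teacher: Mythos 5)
Your overall architecture (reduce to $i=1$, produce an intertwiner between $\theta_{t}|_P$ and the identity, double the angle with $\beta$ up to $\pi/2$, then use the amalgamated free product structure to get $P\prec_M A$) is the right skeleton, and your last step ($\pi_1(P)\prec_\cM\pi_2(M)$ being incompatible with $P\nprec_M A$) matches the paper's use of \cite[Claim 5.3]{Ho07}. But two essential technical points are missing, and the ingredient you put in their place (transversality) does not do the work.

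First, the passage from the hypothesis $\theta_t(P)\prec_\cM\pi_i(M)$ to an intertwiner usable for doubling is not the partial isometry given by Popa's criterion: that $v$ satisfies $\theta_t(a)v=v\varphi(a)$ for some $*$-homomorphism $\varphi$ of a corner of $\theta_t(P)$ into an amplification of $M$, and gives no control over $\tau(\theta_{t_0}(a)a^*)$ for $a\in\cU(P)$. What the paper actually extracts from the hypothesis is a Fock-space statement: writing $p_k$ for the projection of $L^2(M)$ onto $H^{\otimes_A^k}$, the unitaries of $P$ have uniformly ``bounded tensor length'', $\|\sum_{k\leq\kappa}p_k(a)\|_2^2\geq 2\delta$ for all $a\in\cU(P)$ (Lemmas \ref{lem4} and \ref{lem3}, proved by explicit computations with creation operators and the fact that $\theta_t$ contracts high tensor powers towards $\pi_i(M)$ by factors $\cos(t)^k$, $\sin(t)^k$). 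Only this gives $\tau(\theta_{t_0}(a)a^*)\geq\delta$ uniformly for a dyadic $t_0=\pi/2^n$, and then the minimal-norm element of the closed convex hull of $\{\theta_{t_0}(a)a^*\}$ produces a nonzero $v$ with the exact relation $\theta_{t_0}(a)v=va$. Your proposal skips this step entirely, and it is the technical heart of the lemma.

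Second, the nonvanishing in the doubling step is not a consequence of the transversality inequality; transversality compares $\|\theta_{2t}(x)-x\|_2$ with the distance of $\theta_t(x)$ to $M$ and says nothing about whether $v\beta(v^*)$ (or $\theta_{t_0}(v\beta(v^*))$) is nonzero. In the paper, one argues by contradiction under the standing assumption $P\nprec_M A$: then \cite[Theorem 1.2.1]{IPP05} gives $P'\cap q\cM q\subset qMq$, the relation $\theta_{t_0}(a)v=va$ forces $v^*v\in P'\cap q\cM q\subset qMq$, i.e.\ $v^*v$ is $\beta$-fixed, and hence $w_1^*w_1=\theta_{t_0}(\beta(vv^*))\neq0$; this is what allows the iteration to reach $\pi/2$. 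Your proposal never introduces the assumption $P\nprec_M A$, never controls the relative commutant, and therefore has no argument that the doubled intertwiners are nonzero -- which is exactly the point you yourself flag as ``the technical heart'' but then leave unproved. As written, the proof has a genuine gap at both of these places.
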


\begin{lemma}\label{lem2}
Let $q\in M$ be a projection and $P\subset qMq$ a von Neumann subalgebra. If $\theta_t(P)$ is amenable relative to $A$ inside $\cM$ for all $t\in(0,\frac{\pi}{2})$, then $P$ is amenable relative to $A$ inside $M$.
\end{lemma}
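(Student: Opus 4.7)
The plan is to follow the standard transfer of relative amenability through Popa's $s$-malleable deformation, in the spirit of \cite{IPP05,Io12,HR10,Ho12b}, adapting the argument to the $A$-valued semicircular setting.

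Step 1. The rotation $U_t \in \cU(H\oplus H)$ is $A$-bimodular (it only mixes the two copies of $H$), so the induced automorphism $\theta_t \in \Aut(\cM)$ fixes $A$ pointwise. Consequently $\theta_t$ extends canonically to a trace-preserving automorphism of the basic construction $\langle \cM, e_A\rangle$ that fixes $e_A$. Using this extension, the hypothesis yields, for each $t \in (0,\pi/2)$, a $P$-central state $\varphi_t$ on $q\langle \cM, e_A\rangle q$ extending $\tau|_{q\cM q}$, obtained by composing the given $\theta_t(P)$-central state on $\theta_t(q)\langle \cM, e_A\rangle \theta_t(q)$ with (the extension of) $\theta_t$.

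Step 2. By Proposition \ref{prop2}, $\cM = M \ast_A M$, with unique $\tau$-preserving conditional expectation $E_M\colon \cM \to M$. Write $e_M$ for the orthogonal projection of $L^2(\cM)$ onto $L^2(M)$. Since $L^2(M)$ is both a left $M$-submodule and a right $A$-submodule of $L^2(\cM)$, $e_M$ lies in $M' \cap \langle \cM, e_A\rangle$ and commutes with $e_A$. A routine computation using the $M$-bimodularity of $E_M$ identifies $e_M \langle \cM, e_A \rangle e_M \cong \langle M, e_A \rangle$ as von Neumann algebras acting on $L^2(M)$.

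Step 3 (key step). Take a weak$^\ast$ cluster point $\varphi$ of $(\varphi_t)_t$ as $t \to 0^+$. It is a $P$-central positive functional on $q\langle \cM, e_A \rangle q$ with $\varphi|_{q\cM q} = \tau$. The core task is to show $\varphi\bigl(q(1-e_M)q\bigr) = 0$, so that $\varphi$ factors through $q\, e_M\langle\cM,e_A\rangle e_M\, q \cong q\langle M, e_A\rangle q$ and witnesses amenability of $P$ relative to $A$ inside $M$. This is where the $s$-malleability enters: the flip $\beta$ fixes $M$ pointwise and satisfies $\beta \circ \theta_t = \theta_{-t}\circ \beta$, yielding Popa's transversality estimate
\[
\|\theta_{2t}(x) - E_M\theta_{2t}(x)\|_2 \leq \sqrt{2}\,\|\theta_t(x) - x\|_2 \qquad (x \in M),
\]
which, together with the fact that $\theta_t|_A = \id$, forces the family $(\varphi_t)_t$ to concentrate asymptotically on the corner $e_M\langle\cM,e_A\rangle e_M$ as $t \to 0$.

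The main obstacle is Step 3: the delicate weak$^\ast$ limit argument that confines the resulting state to the corner corresponding to $\langle M, e_A\rangle$, preventing any mass from escaping into $L^2(\cM) \ominus L^2(M)$. This is precisely where the full range $t \in (0,\pi/2)$ and the $s$-malleability are used; the other steps are standard bookkeeping with basic constructions and the amalgamated free product structure provided by Proposition \ref{prop2}.
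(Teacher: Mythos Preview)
Your Step~3 contains a genuine gap. The transversality estimate
\[
\|\theta_{2t}(x) - E_M\theta_{2t}(x)\|_2 \leq \sqrt{2}\,\|\theta_t(x) - x\|_2
\]
controls how far $\theta_t$ pushes \emph{elements} of $M$ away from $M$; it says nothing about where an arbitrary $\theta_t(P)$-central state $\psi_t$ on $\langle\cM,e_A\rangle$ places its mass. Concretely, $\varphi_t(q(1-e_M)q)=\psi_t\bigl(\theta_t(q)(1-e_{\theta_t(M)})\theta_t(q)\bigr)$, and you have no control over this quantity: the state $\psi_t$ is handed to you by relative amenability and could perfectly well give positive mass to $1-e_{\theta_t(M)}$. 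The $s$-malleability via $\beta$ is used in the paper for Lemma~\ref{lem1}, not for Lemma~\ref{lem2}; you are invoking the wrong tool.

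The paper bypasses the concentration issue entirely. It works with the \emph{vector} characterization of relative amenability from \cite[Theorem~2.1]{OP07}: for each tuple $(X,Y,\delta,t)$ one chooses a vector $\xi_i\in\theta_t(q)L^2(\cM)\otimes_A L^2(\cM)\theta_t(q)$ that is approximately $\theta_t(P)$-central and implements $\tau$ on a prescribed finite set. Letting $t\to 0$ along the directed set (using only the pointwise convergence $\theta_t\to\id$ on $M$, not transversality), one obtains the weak containment $\bim{qMq}{L^2(qMq)}{P}\prec \bim{qMq}{(qL^2(\cM)\otimes_A L^2(\cM)q)}{P}$. The amalgamated free product structure then gives $\bim{M}{L^2(\cM)}{A}\cong \bim{M}{(L^2(M)\otimes_A\cK)}{A}$ for an explicit $A$-bimodule $\cK$, so the right-hand bimodule is of the form $qL^2(M)\otimes_A(\text{something})$, and \cite[Proposition~2.4]{PV11} converts this into relative amenability of $P$ inside $M$. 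The crucial ingredient you are missing is precisely this last step: the passage from amenability relative to $A$ inside $\cM$ to amenability relative to $A$ inside $M$ requires the bimodule decomposition of $L^2(\cM)$ and \cite[Proposition~2.4]{PV11}, not a concentration-of-states argument.
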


Before proving Lemma \ref{lem1} and Lemma \ref{lem2}, we first show how Theorem \ref{thm1} follows from these two lemmas and we deduce a relative strong solidity theorem for $A$-valued semicircular systems.

\begin{proof}[Proof of Theorem \ref{thm1}]
Put $P=\cN_{qMq}(B)''$. We apply \cite[Theorem A]{Va13} to the subalgebra $\theta_t(B)\subset M\ast_A M$ for a fixed $t\in(0,\frac{\pi}{2})$. Note that $\theta_t(B)$ is normalized by $\theta_t(P)$. So, we get that one of the following holds:
\begin{enumlist}
\item $\theta_t(B)\prec_{\cM} A$.
\item $\theta_t(P)\prec_{\cM} \pi_i(M)$ for some $i\in\{1,2\}$.
\item $\theta_t(P)$ is amenable relative to $A$ inside $\cM$.
\end{enumlist}
If 1 or 2 holds, it follows by Lemma \ref{lem1} that $B\prec_M A$. So, if we assume that $B\nprec_M A$, we get that $\theta_t(P)$ is amenable relative to $A$ inside $\cM$ for all $t\in(0,\frac{\pi}{2})$. It then follows from Lemma \ref{lem2} that $P=\cN_{qMq}(B)''$ is amenable relative to $A$ inside $M$.
\end{proof}

\subsection*{Proof of Lemma \ref{lem1}}

We now turn to the proof of Lemma \ref{lem1}. We first give a sketch of the proof. For each $k\in\N$, we let $p_k\in B(L^2M)$ denote the projection onto $H^{\otimes_A^k}$. Given a von Neumann subalgebra $P\subset qMq$, we first show that if $\theta_t(P)\prec_\cM \pi_i(M)$ for some $i\in\{1,2\}$ and some $t\in(0,\frac{\pi}{2})$, then $P$ has ``bounded tensor length", in the sense that there exists $k\in\N$ and $\delta>0$ such that $\|\sum_{i=0}^k p_i(a)\|_2\geq\delta$ for all $a\in\cU(P)$ (see Lemma \ref{lem3}). Next, we reason exactly as in the proof of \cite[Theorem 4.1]{Po03}. Since $\theta_t$ converges uniformly to $\id$ on the unit ball of $p_i(M)$ for any fixed $i\in\N$, we get a $t\in(0,\frac{\pi}{2})$ and a nonzero partial isometry $v\in\cM$ such that $\theta_t(a)v=va$ for all $a\in\cU(P)$. Using the automorphism $\beta$, we can even obtain $t=\pi/2$, i.e., $\pi_2(a)v=v\pi_1(a)$ for all $a\in\cU(P)$. Using results of \cite{IPP05} on amalgamated free product von Neumann algebras, this implies that $P\prec_M A$.

For simplicity, we put $M_i=\pi_i(M)\subset\cM$ for $i\in\{1,2\}$. Note that
$$ L^2(M_1) = L^2(A)\oplus\bigoplus_{k=1}^\infty (H\oplus 0)^{\otimes_A^k},\quad L^2(M_2) = L^2(A)\oplus\bigoplus_{k=1}^\infty (0\oplus H)^{\otimes_A^k} \; ,$$
as subspaces of $L^2(\cM)=\cF_A(H\oplus H)$. Denote by $e_{M_i}\in B(L^2(\cM))$ the projection onto $L^2(M_i)$.

\begin{lemma}\label{lem4}
If $\mu_n\in L^2(M_1)$ is a bounded sequence such that $\lim_{n\to\infty}\|p_k(\mu_n)\|=0$ for all $k\geq 0$, then for all $i=1,2$, $0<t<\frac{\pi}{2}$, integers $a,b,c,d\geq0$ and vectors $\xi_i,\eta_i,\gamma_i,\rho_i\in \cH\oplus\cH$, we have
$$ \lim_{n\to\infty} \|e_{M_i}(\ell(\xi_1)\cdots\ell(\xi_a)\ell(\eta_b)^\ast\cdots\ell(\eta_1)^\ast r(\gamma_c)\cdots r(\gamma_1)r(\rho_1)^\ast\cdots r(\rho_d)^\ast U_t\mu_n)\| = 0 \; .$$
\end{lemma}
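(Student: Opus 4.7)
The plan is to reduce the lemma to a single decay estimate on each pure tensor degree, and then sum in $k$ using orthogonality of distinct degrees.

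First, I would move $U_t$ past $T$. By the functoriality of the construction $\cM = \Gamma(H\oplus H,J\oplus J,A,\tau)\dpr$, the unitary $U_t$ on $\cF_A(H\oplus H)$ intertwines creation operators: $U_t\ell(\xi)U_t^* = \ell(U_t\xi)$ and $U_t r(\xi)U_t^* = r(U_t\xi)$, and similarly for adjoints. Hence $TU_t = U_t T_{-t}$, where $T_{-t}$ is obtained from $T$ by replacing each inserted vector $\xi_j,\eta_j,\gamma_j,\rho_j$ by its $U_{-t}$-image. Because $U_{-t}$ is an $A$-bimodular isometry on $H\oplus H$, the left/right bounds of the inserted vectors are preserved, so the operator norm of $T_{-t}$ and the $A$-valued inner products it produces are bounded independently of $t$.

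Second, I would establish the key pointwise estimate: for $\mu\in L^2(M_1)$ of pure tensor degree $k$,
$$\|e_{M_i}(U_t T_{-t}\mu)\|\;\le\; C_T\,c_t^{\max(k-b-d,\,0)}\,\|\mu\|,\qquad c_t:=\max(\cos t,\sin t)<1,$$
with $C_T$ depending only on the bounds of the inserted vectors in $T$. The point is that $T_{-t}\mu$ is a tensor of degree $k+\Delta$ (with $\Delta = a-b+c-d$) whose $k-b-d$ \emph{middle} factors come from $\mu$ --- hence, using $A$-bimodularity of the absorbed $A$-valued inner products, they remain in the first copy $H\oplus 0$ --- while the $a+c$ \emph{outer} factors (coming from the creation vectors) lie in $H\oplus H$. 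Applying $U_t$ factorwise returns the outer factors to the original $\xi_j,\gamma_j$ and rotates each middle factor from $H\oplus 0$ into $H\oplus H$. The projection $e_{M_i}$ restricts to $P_i^{\otimes_A m}$ on degree $m$ (with $P_i$ the $A$-bimodular projection of $H\oplus H$ onto copy $i$), so it acts factorwise: each middle factor contributes a norm factor of $\cos t$ (for $i=1$) or $\sin t$ (for $i=2$), giving $c_t^{k-b-d}$ in total, while the outer factors contribute a bounded constant depending only on $T$.

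Third, I would assemble everything. Since $TU_t$ maps pure degree $k$ (for $k\geq b+d$) into pure degree $k+\Delta$, the images $TU_t p_k(\mu_n)$ for distinct large $k$ are mutually orthogonal in $\cF_A(H\oplus H)$; and since $e_{M_i}$ preserves degree, so are $e_{M_i}(TU_t p_k(\mu_n))$. Splitting off the finite boundary contribution from $k<b+d$ (which vanishes as $n\to\infty$ since $\|p_k(\mu_n)\|\to 0$ for each fixed $k$), I obtain
$$\|e_{M_i}(TU_t\mu_n)\|^2 \;\le\; o(1)\;+\;C_T^2\sum_{k\ge b+d} c_t^{2(k-b-d)}\,\|p_k(\mu_n)\|^2.$$
The last sum tends to $0$ by dominated convergence: each summand is dominated by $c_t^{2(k-b-d)}\sup_n\|\mu_n\|^2$, which is summable in $k$, and tends to $0$ pointwise in $k$.

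The main obstacle will be the pointwise estimate of step two, which requires careful bookkeeping of the $A$-valued inner products produced by the annihilation operators and of their interaction with the Connes tensor product. The crux is that $A$-bimodularity of both $U_t$ and $P_i$ guarantees that these inner products can be absorbed into the outer constant $C_T$ without disturbing the $c_t^{k-b-d}$ decay contributed by the middle positions.
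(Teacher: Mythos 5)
Your proposal is correct and follows essentially the paper's own route: the per-degree decay estimate you isolate (each middle tensor leg of a degree-$k$ component picking up a factor $\cos t$ or $\sin t$ under $e_{M_i}U_t$, the outer legs being absorbed into a bounded word of creation/annihilation operators with $q_i$-projected vectors) is exactly what the paper encodes in the operator $Z_i = \bigoplus_{e\ge b+d}\delta_i^{\,e-b-d}\bigl(U_t^{\otimes_A^b}\otimes_A 1^{\otimes_A^{(e-b-d)}}\otimes_A U_t^{\otimes_A^d}\bigr)$ together with the identity $e_{M_i}(T U_t p_{\geq b+d}(\mu)) = \ell(q_i\xi_1)\cdots\ell(q_i\xi_a)\,\ell(\eta_b)^*\cdots\ell(\eta_1)^*\,r(q_i\gamma_c)\cdots r(q_i\gamma_1)\,r(\rho_1)^*\cdots r(\rho_d)^*\,Z_i(\mu)$, checked on elementary tensors, which is precisely the bookkeeping you defer. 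The remaining difference is cosmetic (you sum over degrees via orthogonality and dominated convergence, the paper concludes from $\|Z_i(\mu_n)\|\to 0$ and boundedness of the remaining word); just make sure to carry out your step two as an operator identity of this form, extended by linearity and continuity from elementary tensors, rather than as factorwise norm estimates on individual elementary tensors, since such norm inequalities do not by themselves pass to linear combinations.
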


\begin{proof}
Fix $t\in(0,\frac{\pi}{2})$ and define $\delta_1=\cos t$ and $\delta_2=\sin t$. Define the operator $Z_i\in B(L^2\cM)$ for $i=1,2$ by
$$ Z_i = \bigoplus_{e\geq b+d}\delta_i^{e-b-d}(U_t^{\otimes_A^b} \; \otimes_A \; 1^{\otimes_A^{(e-b-d)}} \; \otimes_A \; U_t^{\otimes^d_A}) \; .$$
Denote $p_{\geq\kappa} = \sum_{i=\kappa}^\infty p_i$ and $p_{<\kappa}=\sum_{i=0}^{k-1}p_i$. When $\kappa \geq b+d$, we have $\|Z_i p_{\geq \kappa}\| = \delta_i^{\kappa - b - d}$. Since $\lim_n \|p_{< \kappa}(\mu_n)\|=0$ for every $\kappa$, we get that $\lim_n \|Z_i (\mu_n)\| = 0$. So, it suffices to prove that
\begin{align*}
&e_{M_i}(\ell(\xi_1)\cdots\ell(\xi_a)\ell(\eta_b)^\ast\cdots\ell(\eta_1)^\ast r(\gamma_c)\cdots r(\gamma_1)r(\rho_1)^\ast\cdots r(\rho_d)^\ast U_tp_{\geq b+d}(\mu)) \\
&\qquad= \ell(q_i \xi_1)\cdots\ell(q_i \xi_a)\ell(\eta_b)^\ast\cdots\ell(\eta_1)^\ast r(q_i \gamma_c)\cdots r(q_i \gamma_1)r(\rho_1)^\ast\cdots r(\rho_d)^\ast Z_i(\mu)
\end{align*}
for all $\mu\in L^2(M_1)$, where $q_1$, resp. $q_2$, denotes the orthogonal projection of $H \oplus H$ onto $H \oplus 0$, resp.\ $0 \oplus H$. It is sufficient to check this formula for $\mu=\mu_1\otimes_A \cdots \otimes_A \mu_e$ with $\mu_i\in \cH\oplus 0$ and $e\geq b+d$, where it follows by a direct computation.
\end{proof}

\begin{lemma}\label{lem3}
If $a_n\in M$ is a bounded sequence with $\lim_n\|p_k(a_n)\|_2=0$ for all $k\geq 0$, then
$$\lim_{n\to\infty}\|E_{M_i}(x\theta_t(a_n)y)\|_2=0 \; ,$$
for all $i\in\{1,2\}$, $0<t<\frac{\pi}{2}$ and $x,y\in\cM$.
\end{lemma}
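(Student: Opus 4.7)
The goal is to reduce the claim to Lemma \ref{lem4}. Using $\|E_{M_i}(z)\|_2 = \|e_{M_i}(z\Omega)\|$, the vacuum invariance $U_t\Omega = \Omega$, and the fact that right multiplication by $y\in\cM$ on $L^2(\cM)$ is implemented by $\cJ y^*\cJ\in\cM'$ (where $\cJ$ is the modular conjugation from the proof of Proposition \ref{prop1}), we rewrite
$$x\theta_t(a_n)y\,\Omega = x(\cJ y^*\cJ)\,U_t\mu_n, \qquad \mu_n := a_n\Omega \in L^2(M_1).$$
By hypothesis $(\mu_n)$ is bounded in $L^2$ and $\|p_k(\mu_n)\|\to 0$ for every $k$, so $\mu_n$ satisfies the hypotheses of Lemma \ref{lem4}.

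Next, I pass to the norm-dense $*$-subalgebra $\cA\subset\cM$ spanned by $A$ together with the Wick products $W(\xi_1,\ldots,\xi_m)$, $\xi_j\in\cH\oplus\cH$. Setting $C=\sup_n\|a_n\|_\infty$, the inequality
$$\|e_{M_i}(x\theta_t(a_n)y\Omega)-e_{M_i}(x'\theta_t(a_n)y'\Omega)\|\leq C\|x\|_\infty\|y-y'\|_2 + C\|y'\|_\infty\|x-x'\|_2$$
(for the second term, one uses $(x-x')(\theta_t(a_n)y')\Omega = \cJ(\theta_t(a_n)y')^*\cJ(x-x')\Omega$) together with Kaplansky density reduces the problem to $x,y\in\cA$, and by linearity to $x$ and $y$ of the form $a'W(\xi_1,\ldots,\xi_p)a''$ and $b'W(\eta_1,\ldots,\eta_q)b''$ with $a',a'',b',b''\in A$.

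For such $x$ and $y$, the definition \eqref{eq.wick} together with the bimodularity identities $a\ell(\zeta)=\ell(a\zeta)$, $\ell(\zeta)a=\ell(\zeta a)$, $\ell(\zeta)^*a=\ell(a^*\zeta)^*$ (and their adjoints, and their analogues for $r$) allow one to absorb the $A$-coefficients into the vectors and expand $x$ as a finite sum of monomials $\ell(\xi_1)\cdots\ell(\xi_a)\ell(\eta_b)^*\cdots\ell(\eta_1)^*$. Using $\cJ\ell(\zeta)\cJ=r(J\zeta)$, the analogous expansion for $\cJ y^*\cJ$ yields a finite sum of monomials $r(\gamma_c)\cdots r(\gamma_1)r(\rho_1)^*\cdots r(\rho_d)^*$. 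Any leftover $A$-coefficient that does not meet a creation/annihilation operator (the case $p=0$ or $q=0$) commutes past the $r$'s and $\ell$'s, respectively, and past $U_t$ (which is $A$-bimodular), so it may be absorbed into $\mu_n$ while preserving its hypotheses. Thus $x(\cJ y^*\cJ)U_t\mu_n$ is a finite sum of terms of exactly the form appearing in Lemma \ref{lem4}, and applying that lemma term by term yields the conclusion.

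The only delicate step is the algebraic bookkeeping in the last paragraph: checking that every $A$-coefficient in $x$, $y$, and in the individual summands of the Wick products can be pushed into the vectors so as to produce the precise normal form required by Lemma \ref{lem4}. The rest is a routine Kaplansky approximation combined with the functional-analytic content already contained in Lemma \ref{lem4}.
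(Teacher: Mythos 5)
Your proof is correct and takes essentially the same route as the paper: rewrite $E_{M_i}(x\theta_t(a_n)y)$ as $e_{M_i}(x\,\cJ y^*\cJ\,U_t(a_n\Omega))$, reduce by $\|\cdot\|_2$-density to the case where $x,y$ are Wick products, expand these into creation/annihilation monomials and apply Lemma \ref{lem4} term by term. The additional bookkeeping you supply (the Kaplansky-type approximation estimate and the absorption of $A$-coefficients into the vectors or into $\mu_n$) is exactly what the paper's one-line reduction leaves implicit.
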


\begin{proof}
It suffices to take $x=W(\xi_1,\ldots,\xi_k)$ and $y=W(\eta_1,\ldots,\eta_m)$ with $\xi_i,\eta_i\in\cH\oplus\cH$ (as defined in Section \ref{Shlyakhtenko}), since these elements span a $\|\cdot\|_2$-dense subspace of $\cM\ominus A$. Then,
\begin{align*}
& E_{M_i}(x\theta_t(a_n)y) = e_{M_i}(xJy^\ast JU_t(a_n\Omega)) \\
& = \sum_{s=0}^k\sum_{r=0}^m e_{M_i}(\ell(\xi_1)\cdots\ell(\xi_s)\ell(J\xi_{s+1})^\ast\cdots\ell(J\xi_k)^\ast r(\eta_m)\cdots r(\eta_{r+1})r(J\eta_{r})^\ast\cdots r(J\eta_1)^\ast U_t(a_n\Omega)) \; ,
\end{align*}
and the result now follows from Lemma \ref{lem4}
\end{proof}

We are now ready to finish the proof of Lemma \ref{lem1}.

\begin{proof}[Proof of Lemma \ref{lem1}]
Assume that $\theta_t(P)\prec M_i$ for some $i\in\{1,2\}$ and $t\in(0,\frac{\pi}{2})$. By Lemma \ref{lem3}, we get a $\delta>0$ and $\kappa>0$ such that $\|\sum_{i=0 }^\kappa p_i(a)\|^2_2\geq2\delta$ for all $a\in\cU(P)$. Note that $\langle U_t(p_i(a)),p_j(a)\rangle = 0$ if $i\neq j$ and that $\langle U_t(p_i(a)), p_i(a)\rangle = \cos(t)^i\|p_i(a)\|_2^2$. Choose $t_0\in(0,\frac{\pi}{2})$ such that $\cos(t_0)^i\geq 1/2$ for all $i=0,\ldots,\kappa$. Note that we may choose $t_0$ of the form $t_0=\pi/2^n$. For all $a\in\cU(P)$, we then have
\begin{align*}
\tau(\theta_{t_0}(a)a^\ast) &= \langle U_{t_0}(a),a\rangle = \sum_{i,j=0}^\infty\langle U_{t_0}(p_i(a)),p_j(a)\rangle= \sum_{i=0}^\infty\cos(t_0)^i\|p_i(a)\|_2^2\\
&\geq \sum_{i=0}^\kappa\cos(t_0)^i\|p_i(a)\|_2^2\geq\frac{1}{2} 2\delta= \delta\; .
\end{align*}

Let $v$ be the unique element of minimal $\|\cdot\|_2$-norm in the $\|\cdot\|_2$-closed convex hull of $\{\theta_{t_0}(a)a^\ast\mid a\in\cU(P)\}$. Then $v\in\cM$ and $\theta_{t_0}(a)v=va$ for all $a\in\cU(P)$. Moreover, $v\neq0$ since $\tau(v)\geq\delta$.

Put $w_1=\theta_{t_0}(v\beta(v^\ast))$. Then $w_1$ satisfies $w_1a = \theta_{2t_0}(a)w_1$ for all $a\in\cU(P)$. However, we do not know yet that $w_1$ is nonzero. Assuming that $P\nprec_M A$, we have from Proposition \ref{prop2} and \cite[Theorem 1.2.1]{IPP05} that $P'\cap q\cM q\subset qMq$, hence $v^\ast v\in qMq$. Thus
$$ w_1^\ast w_1 = \theta_{t_0}(\beta(v)v^\ast v\beta(v^\ast)) = \theta_{t_0}(\beta(vv^\ast))\neq 0 \; . $$
By iterating this process, we obtain $w=w_{n-1}\neq0$ such that $wa=\theta_{\pi/2}(a)w$, i.e., $w\pi_1(a)=\pi_2(a)w$ for all $a\in P$. This means that $P \prec_\cM M_2$. As in \cite[Claim 5.3]{Ho07}), this is incompatible with our assumption $P \not\prec_M A$. So it follows that $P \prec_M A$ and the lemma is proved.
\end{proof}

\subsection*{Proof of Lemma \ref{lem2}}

\begin{proof}
Let $P\subset qMq$ and assume that $\theta_t(P)$ is amenable relative to $A$ in $\cM$ for all $t\in(0,\frac{\pi}{2})$. As in the proof of \cite[Theorem 5.1]{Io12} (and \cite[Theorem 3.4]{Va13}), we let $I$ be the set of all quadruples $i=(X,Y,\delta,t)$ where $X\subset\cM$ and $Y\subset\cU(P)$ are finite subsets, $\delta\in(0,1)$ and $t\in(0,\frac{\pi}{2})$. Then $I$ is a directed set when equipped with the ordering $(X,Y,\delta,t)\leq(X',Y',\delta',t')$ if and only if $X\subset X'$, $Y\subset Y'$, $\delta'\leq\delta$ and $t'\leq t$.

By \cite[Theorem 2.1]{OP07}, we can for each $i=(X,Y,\delta,t)\in I$ choose a vector $\xi_i\in \theta_t(q)L^2(\cM)\otimes_A L^2(\cM)\theta_t(q)$ such that $\|\xi_i\|_2\leq1$ and
\begin{align*}
&|\langle x\xi_i,\xi_i\rangle-\tau(x\theta_t(q))|\leq\delta \quad \text{for every }x\in X\text{ or }x=(\theta_t(y)-y)^\ast(\theta_t(y)-y)\text{ with }y\in Y \; ,\\
&\|\theta_t(y)\xi_{i}-\xi_{i}\theta_t(y)\|_2\leq\delta \quad \text{for every }y\in Y \; .
\end{align*}


We now prove that $\bim{qMq}{L^2(qMq)}{P}$ is weakly contained in $\bim{qMq}{(qL^2(\cM)\otimes_A L^2(\cM)q)}{P}$. For this, it suffices to show that
\begin{equation}\label{eq.suff-weak-contain}
\begin{split}
&\lim_i\langle x\xi_i,\xi_i\rangle = \tau(x) \quad \text{for every }x\in q\cM q \; ,\\
&\lim_i\|y\xi_i-\xi_iy\|_2=0 \quad \text{for every }y\in P \; .
\end{split}
\end{equation}
Let $y\in\cU(P)$ and $\eps>0$ be given. Choose $t>0$ small enough so that $\|\theta_t(y)-y\|_2^2\leq\eps/6$. We have
\begin{align*}
\|y\xi_i-\xi_i y\|_2 &\leq \|(y-\theta_t(y))\xi_i\|_2 + \|\theta_t(y)\xi_i-\xi_i\theta_t(y)\|_2 + \|\xi_i(\theta_t(y)-y)\|_2
\end{align*}
for any $i\in I$. Moreover,
$$ \|(y-\theta_t(y))\xi_i\|_2^2=\langle(\theta_t(y)-y)^\ast(\theta_t(y)-y)\xi_i,\xi_i\rangle \leq \|(\theta_t(y)-y)\theta_t(q)\|_2^2+\eps/6\leq \eps/3 \; , $$
for $i\geq(\{0\},\{y\},\eps/6,t)$ in $I$. Similarly, we get that $\|\xi_i(\theta_t(y)-y)\|_2\leq\eps/3$. Thus, we conclude that $\|y\xi_i-\xi_iy\|_2\leq\eps$ for $i\geq(\{0\},\{y\},\eps/6,t)$ and so the second assertion of \eqref{eq.suff-weak-contain} holds true. The first assertion is proved similarly, using that $\|\theta_t(q)-q\|_2\to0$ as $t\to0$.

By Proposition \ref{prop2}, we have $\cM=M_1\ast_A M_2$. Under our identification $M = M_1$, we then get that $\bim{M}{L^2(\cM)}{A} \cong \bim{M}{(L^2(M) \ot_A \cK)}{A}$, where $\bim{A}{\cK}{A}$ is the $A$-bimodule defined as the direct sum of $L^2(A)$ and all alternating tensor products $L^2(M_2 \ominus A) \ot_A L^2(M_1 \ominus A) \ot_A \cdots$ starting with $L^2(M_2 \ominus A)$. We conclude that $\bim{qMq}{L^2(qMq)}{P}$ is weakly contained in $\bim{qMq}{(q L^2(M) \ot_A (\cK \ot_A L^2(\cM)q))}{P}$. It then follows from \cite[Proposition 2.4]{PV11} that $P$ is amenable relative to $A$ inside $M$. This finishes the proof of Lemma \ref{lem2}.
\end{proof}

\section{\boldmath Maximal amenability}

%

Fix a tracial von Neumann algebra $(A,\tau)$ and a symmetric Hilbert $A$-bimodule $\bim{A}{H}{A}$ with symmetry $J : H \recht H$. Denote by $M = \Gamma(H,J,A,\tau)\dpr$ the associated von Neumann algebra with faithful normal tracial state $\tau$. We prove the following maximal amenability property by combining Popa's asymptotic orthogonality \cite{Po83} with the method of \cite{BH16}. In the special case of free Bogoljubov crossed products (see Remark \ref{rem.bogol-crossed}), part 3 of Theorem \ref{thm.max-amen} was proved in \cite[Theorem D]{Ho12b}.

\begin{theorem}\label{thm.max-amen}
Assume that $\bim{A}{H}{A}$ is weakly mixing. Then the following properties hold.
\begin{enumlist}
\item $\cZ(M) = \{a \in \cZ(A) \mid a \xi = \xi a \;\;\text{for all}\;\; \xi \in H \}$.
\item If $B \subset M$ is a von Neumann subalgebra that is amenable relative to $A$ inside $M$ and if the bimodule $\bim{B \cap A}{H}{A}$ is left weakly mixing, then $B \subset A$.
\item A von Neumann subalgebra of $M$ that properly contains $A$ is not amenable relative to $A$ inside $M$. If the $A$-bimodule $\bim{A}{H}{A}$ is faithful\footnote{A $P$-$Q$-bimodule $\bim{P}{H}{Q}$ is called faithful if the $*$-homomorphisms $P \recht B(H)$ and $Q\op \recht B(H)$ are faithful.}, then $M$ has no amenable direct summand. If $A$ is amenable, then $A \subset M$ is a maximal amenable subalgebra.
\end{enumlist}
\end{theorem}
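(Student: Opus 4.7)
The plan is to prove the three items in order: Part 1 via the Fock decomposition of $M$, Part 2 via the $s$-malleable deformation constructed in Section 4, and Part 3 as a formal consequence.

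For Part 1, I would analyse any $x \in \cZ(M)$ through its action on the vacuum vector, writing $x \Omega = \sum_{n \geq 0} \eta_n$ with $\eta_n \in H^{\otimes_A^n}$. Commutativity of $x$ with $A$ forces each $\eta_n$ to be $A$-central in its Fock level, while commutativity of $x$ with both $\ell(\xi) + \ell(J\xi)^* \in M$ and the mirror operators $r(\xi) + r(J\xi)^* \in \cJ M \cJ \subset M'$ yields, by comparing the level-$m$ components of $x\xi$, the recursion
\[
\xi \otimes_A \eta_{m-1} - \eta_{m-1} \otimes_A \xi \;=\; r(J\xi)^* \eta_{m+1} - \ell(J\xi)^* \eta_{m+1}, \qquad \xi \in \cH, \; m \geq 1.
\]
Working from the largest level downward via $L^2$-truncation (since $\sum_n \|\eta_n\|^2 < \infty$) and invoking the absence of nonzero intertwiners guaranteed by weak mixing of $\bim{A}{H}{A}$ and Proposition \ref{prop5}, one sees that no nonzero $A$-central $\eta_n$ with $n \geq 1$ satisfies this relation. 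Hence $x = \eta_0 \in A$; commutativity with $A$ forces $\eta_0 \in \cZ(A)$, and comparing the action of $x$ with left-multiplication by $\eta_0$ on $H$ yields $\eta_0 \xi = \xi \eta_0$ for all $\xi \in H$.

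Part 2 is the technical heart. I would invoke the $s$-malleable deformation $(\cM, \theta_t, \beta)$ from Section 4, with $\cM = M *_A M = M_1 *_A M_2$, and adapt the dichotomy strategy from the proof of Theorem \ref{thm1} (based on \cite[Theorem A]{Va13}) combined with the maximal amenability method of \cite{BH16}. Applying this dichotomy to $\theta_t(B) \subset \cM$, for each $t \in (0, \pi/2)$ either $\theta_t(B) \prec_\cM M_i$ for some $i$, in which case Lemma \ref{lem1} gives $B \prec_M A$; or else Popa's transversality argument, applied to the asymptotically $B$-central vectors furnished by relative amenability, yields a nonzero partial isometry $v \in \cM$ with $\theta_{t_0}(b) v = v b$ for all $b \in B$ and some $t_0 \in (0, \pi/2)$. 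Iterating via the involution $\beta$ then upgrades this to $\pi_2(b) v = v \pi_1(b)$ for all $b \in B$. In the first case, the weak mixing of $\bim{B \cap A}{H}{A}$, together with Proposition \ref{prop5}, forbids any nonzero $(B \cap A)$-$A$-sub-bimodule of $L^2(M) \ominus L^2(A)$ of finite right-$A$-dimension; combined with Popa's intertwining criterion and the amalgamated free product structure (Proposition \ref{prop2}), this forces $B \subset A$. In the second case, \cite[Theorem 1.2.1]{IPP05} together with the weak mixing of $\bim{B \cap A}{H}{A}$ restricts the support of $v$ to $A \subset \cM$, which again yields $B \subset A$.

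Part 3 follows formally. For (3a), if $A \subsetneq B \subset M$ is amenable relative to $A$, then $B \cap A = A$ so $\bim{B \cap A}{H}{A} = \bim{A}{H}{A}$ is weakly mixing by hypothesis, and Part 2 gives $B \subset A$, a contradiction. For (3c), any amenable $B \supset A$ is amenable relative to $A$, so by (3a) $B = A$ and $A$ is maximal amenable. For (3b), suppose $0 \neq z \in \cZ(M)$ with $Mz$ amenable; Part 1 gives $z \in \cZ(A)$ with $z\xi = \xi z$ for all $\xi \in H$, so by functoriality of the Shlyakhtenko construction one has $Mz \cong \Gamma(zH, J|_{zH}, Az, \tau|_{Az})''$ and the restricted symmetric bimodule $\bim{Az}{zH}{Az}$ is weakly mixing (restrict the weakly mixing net of unitaries of $A$ to $Az$). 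Applying (3a) inside $Mz$ to $Az \subsetneq Mz$ gives $Mz = Az$, i.e., $zH = 0$; faithfulness of $\bim{A}{H}{A}$ then forces $z = 0$. The main obstacle is Part 2: weak mixing is only assumed for $\bim{B \cap A}{H}{A}$ and not for $\bim{A}{H}{A}$, so classical weak-mixing arguments at the level of $A$ are unavailable. The delicate point is combining the \cite{BH16} deformation-rigidity dichotomy, Popa's asymptotic orthogonality from \cite{Po83} (to rule out approximate intertwiners between $B$ and large subalgebras), and the relative weak mixing over $B \cap A$ (to finally pin down $B$ inside $A$), while carefully tracking how Popa's intertwining relations decompose across individual Fock levels and how the rotations $\theta_t$ interact with the finer subalgebra $B \cap A$.
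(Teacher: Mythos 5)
Your Parts 1 and 3 are essentially fine: Part 1 can even be shortened (once each Fock component $\eta_n$ of $x\Omega$ is $A$-central, weak mixing and Proposition \ref{prop5} kill all levels $n \geq 1$ outright, so the recursion is unnecessary), and Part 3 is the same formal deduction as in the paper. The genuine gap is in Part 2, which you yourself identify as the technical heart but for which you propose a mechanism that cannot deliver the conclusion. The deformation/rigidity dichotomy of Theorem \ref{thm1} (via \cite{Va13}, Lemma \ref{lem1}, \cite{IPP05}) produces, in every branch you describe, an \emph{intertwining} statement $B \prec_M A$ (or $B \prec_\cM M_2$, which as in Lemma \ref{lem1} again reduces to $B \prec_M A$). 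Intertwining is strictly weaker than containment: it gives a corner embedding via a partial isometry, and no step in your sketch upgrades this to $B \subset A$. Moreover, the intermediate step ``Popa's transversality applied to the asymptotically $B$-central vectors furnished by relative amenability yields $v \neq 0$ with $\theta_{t_0}(b)v = vb$'' is not valid: convergence of the deformation to a nonzero intertwiner requires a uniform lower bound $\tau(\theta_{t_0}(b)b^*) \geq \delta$ over $b \in \cU(B)$, which in Lemma \ref{lem1} comes from the assumed intertwining $\theta_t(B) \prec_\cM \pi_i(M)$ via Lemma \ref{lem3}, not from relative amenability. Finally, the hypothesis that $\bim{B \cap A}{H}{A}$ is left weakly mixing never actually enters your argument in a way that pins $B$ inside $A$; your closing paragraph concedes this, which means the decisive step is missing rather than merely compressed.

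The paper's proof of Part 2 uses no deformation at all. Relative amenability is converted (via \cite{OP07}) into a $B$-central state $\om$ on $\langle M, e_A \rangle$ with $\om|_M = \tau$, and the argument is an asymptotic orthogonality/averaging argument in the spirit of \cite{Po83,BH16}: the left weak mixing of $\bim{B \cap A}{H}{A}$ supplies unitaries $v_1,\ldots,v_\kappa \in \cU(B \cap A)$ making the translates $v_i \eta$ of a suitably normalized vector $\eta \in \cH$ almost orthogonal, so the projections $P_i = \ell(v_i \eta q_2)\ell(v_i \eta q_2)^*$ are almost orthogonal; since $\om$ is $B$-central it gives each $P_i$ the same mass, forcing $\om(\ell(\xi p)\ell(\xi p)^*)$ to be arbitrarily small (Claim~I) and then zero (Claim~II) on projections $p \in A$ close to $1$. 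Feeding this into the Wick operators $W(\xi_1,\ldots,\xi_k)$ shows $E_B$ vanishes on $M \ominus A$, i.e.\ $E_B = E_B \circ E_A$, which is the exact containment $B \subset A$. If you want to salvage your plan, this central-state argument is the ingredient you must supply; the $s$-malleable deformation cannot replace it.
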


\begin{proof}
As above, identify
$$L^2(M) = L^2(A) \oplus \bigoplus_{n \geq 1} \bigl(\underbrace{H \ot_A \cdots \ot_A H}_{\text{$n$-fold}}\bigr)$$
and denote by $\cH \subset H$ the subspace of vectors that are both left and right bounded.

1.\ Since $\bim{A}{H}{A}$ is weakly mixing, it follows from Proposition \ref{prop5} that the $n$-fold tensor products $H \ot_A \cdots \ot_A H$ (with $n \geq 1$) have no $A$-central vectors. Therefore, $A' \cap M = \cZ(A)$. Looking at the commutator of $a \in \cZ(A)$ and $\ell(\xi) + \ell(J\xi)^*$, the conclusion follows.

2.\ Since $B$ is amenable relative to $A$ inside $M$, we can fix a $B$-central state $\om \in \langle M,e_A \rangle^*$ such that $\om|_M = \tau$.

{\bf Claim I.} For every $\xi \in \cH$ and every $\eps > 0$, there exists a projection $p \in A$ such that $\tau(1-p)<\eps$ and such that
$$\om(\ell(\xi p) \ell(\xi p)^*) < \eps \; .$$
To prove this claim, fix $\xi \in \cH$ and $\eps > 0$. Define $a = \sqrt{\langle \xi,\xi\rangle_A}$ and denote by $q \in A$ the support projection of $a$. Take a projection $q_1 \in qAq$ that commutes with $a$, such that $\tau(q-q_1) < \eps/2$ and such that $a q_1$ is invertible in $q_1 A q_1$. Denote by $b \in q_1 A q_1$ this inverse and define $\eta = \xi b$. By construction, $\ell(\eta)^* \ell(\eta) = q_1$ and $\xi q_1 = \eta a$.

Pick a positive integer $N$ such that $2^{-N} < \eps / (2 \|a\|^2)$. Put $\kappa = 2^N$. Then pick $\delta > 0$ such that $\delta < \eps / (\kappa 2 \|a\|^2)$. We start by constructing unitary operators $v_1,\ldots,v_\kappa \in \cU(A \cap B)$ and a projection $q_2 \in q_1 A q_1$ such that $\tau(q_1 - q_2) < \eps/2$ and such that the vectors $\eta_i = v_i \eta$ satisfy
\begin{equation}\label{eq.goal1}
\| q_2 \langle \eta_i , \eta_j \rangle_A \, q_2 \| < \delta \quad\text{whenever}\;\; i \neq j
\end{equation}
(and where we indeed use the operator norm at the left hand side of \eqref{eq.goal1}).

We put $e_0 = q_1$ and $v_1 = 1$. Denoting by $(a_i)$ the net of unitaries in $B \cap A$ witnessing the left weak mixing of $\bim{B \cap A}{H}{A}$, we get that $\lim_i \| \langle \eta , a_i \eta \rangle_A \|_2 = 0$. So we find a net of projections $r_i \in q_1 A q_1$ such that $\tau(q_1 - r_i) \recht 0$ and
$$\| r_i \, \langle \eta , a_i \eta \rangle_A \, r_i \| < \delta \quad\text{for every $i$.}$$
Take $i$ large enough such that $\tau(q_1 - r_i) < \eps/4$ and define $e_1 := r_i$ and $v_2 := a_i$. We have now constructed $v_1,v_2$. Inductively, we double the length of the sequence, until we arrive at $v_1,\ldots,v_\kappa$. After $k$ steps, we have constructed the projections $e_1 \geq \cdots \geq e_k$ and unitaries $v_1,\ldots,v_{2^k}$ in $\cU(B \cap A)$ such that $\tau(e_{j-1} - e_j) < 2^{-j-1} \eps$ and such that the vectors $\eta_i = v_i \eta$ satisfy
$$\|e_k \, \langle \eta_i , \eta_j \rangle_A \, e_k \| < \delta \quad\text{whenever}\;\; i \neq j \; .$$
As in the first step, we can pick a unitary $a \in \cU(B \cap A)$ and a projection $e_{k+1} \in e_k A e_k$ such that $\tau(e_k-e_{k+1}) < 2^{-k-2} \eps$ and such that
$$\|e_{k+1} \, \langle \eta_i , a \eta_j \rangle_A \, e_{k+1} \| < \delta$$
for all $i,j \in \{1,\ldots,2^k\}$. It now suffices to put $v_{2^k+i} = a v_i$ for all $i = 1,\ldots,2^k$. We have doubled our sequence. We continue for $N$ steps and put $q_2 = e_N$. So, \eqref{eq.goal1} is proved.

Put $\mu_i = \eta_i q_2 = v_i \eta q_2$. Define the projections $P_i = \ell(\mu_i) \ell(\mu_i)^*$ and note that $P_i = v_i P_1 v_i^*$. By construction, $\|P_i P_j\| < \delta$ whenever $i \neq j$. Writing $P = \sum_{i=1}^\kappa P_i$ it follows that $\|P^2 - P \| < \kappa^2 \delta$. Since $P$ is a positive operator, we conclude that $\|P\| < 1 + \kappa^2 \delta$. Since $\om$ is $B$-central and $v_i \in B$ for all $i$, we get that
$$\kappa \, \om(P_1) = \sum_{i=1}^\kappa \om(P_i) = \om(P) \leq \|P\| < 1 + \kappa^2 \delta \; .$$
Therefore, $\om(P_1) < \kappa^{-1} + \kappa \delta < \|a\|^{-2} \eps$.

Since $q_1$ and $a$ commute, the right support of $(q_1-q_2)a$ is a projection of the form $q_1 - p_0$ where $p_0 \in q_1 A q_1$ is a projection with $\tau(q_1-p_0) \leq \tau(q_1-q_2) < \eps/2$. By construction, $q_1 a p_0 = q_2 a p_0$. Since  $p_0 \leq q_1$ and $\eta = \eta q_1$, it follows that
$$\xi p_0 = \xi q_1 p_0 = \eta a p_0 = \eta q_1 a p_0 = \eta q_2 a p_0 \; .$$
Define the projection $p \in A$ given by $p = (1-q) + p_0$. Since $\xi(1-q) = 0$, we still have $\xi p = \eta q_2 a p_0$. Because $1-p = (q-q_1) + (q_1-p_0)$, we get that $\tau(1-p) < \eps$. Finally,
$$\om(\ell(\xi p) \ell(\xi p)^*) = \om(\ell(\eta q_2) \, a p_0 a^* \, \ell(\eta q_2)^*) \leq \|a\|^2 \, \om(\ell(\eta q_2) \ell(\eta q_2)^*) = \|a\|^2 \, \om(P_1) < \eps \; .$$
So, we have proven Claim~I.

{\bf Claim~II.} For every $\xi \in \cH$ and every $\eps > 0$, there exists a projection $p \in A$ such that $\tau(1-p)<\eps$ and such that $\om(\ell(\xi p) \ell(\xi p)^*) = 0$.

For every integer $k \geq 1$, Claim~I gives a projection $p_k \in A$ with $\tau(1-p_k) < 2^{-k}\eps$ and $\om(\ell(\xi p_k) \ell(\xi p_k)^*) < 1/k$. Defining $p = \bigwedge_k p_k$, we get that $\tau(1-p) < \eps$ and, for every $k \geq 1$,
$$\om(\ell(\xi p) \ell(\xi p)^*) = \om(\ell(\xi) p \ell(\xi)^*) \leq \om(\ell(\xi) p_k \ell(\xi)^*) = \om(\ell(\xi p_k) \ell(\xi p_k)^*) < 1/k \; .$$
So, $\om(\ell(\xi p) \ell(\xi p)^*) = 0$ and claim~II is proved.

We can now conclude the proof of 2. Denote by $E_A : M \recht A$ and $E_B : M \recht B$ the unique trace preserving conditional expectations. It is sufficient to prove that $E_B \circ E_A = E_B$. So we have to prove that $E_B(x) = 0$ for all $x \in M \ominus A$. Using the Wick products defined in \eqref{eq.wick}, it suffices to prove that $E_B(W(\xi_1,\ldots,\xi_k))=0$ for all $k \geq 1$ and all $\xi_1,\ldots,\xi_k \in \cH$.

Since $\om$ is $B$-central and $\om|_M = \tau$, there is a unique conditional expectation $\Phi : \langle M,e_A \rangle \recht B$ such that $\Phi|_M = E_B$ and $\om = \tau \circ \Phi$.

We first consider $k \geq 2$ and $\xi_1,\ldots,\xi_k \in \cH$. By Claim~II, we can take sequences of projections $p_n,q_n \in A$ such that $p_n \recht 1$ and $q_n \recht 1$ strongly and
$$\Phi(\ell(\xi_1 p_n)\ell(\xi_1 p_n)^*) = 0 = \Phi(\ell((J\xi_k)q_n) \ell((J \xi_k)q_n)^*)$$
for all $n$. Then also $\Phi(\ell(\xi_1 p_n) T) = 0 = \Phi(T \ell((J \xi_k)q_n)^*)$ for all $n$ and all $T \in \langle M,e_A \rangle$. We conclude that
$$E_B(W(\xi_1 p_n,\xi_2,\ldots,\xi_{k-1}, q_n \xi_k)) = \Phi(W(\xi_1 p_n,\xi_2,\ldots,\xi_{k-1}, q_n \xi_k)) = 0$$
for all $n$. Since $E_B$ is normal, it follows that $E_B(W(\xi_1,\ldots,\xi_k))=0$.

We next consider the case $k=1$. So it remains to prove that $E_B(\ell(\xi) + \ell(J\xi)^*) = 0$ for all $\xi \in \cH$. For this, it is sufficient to prove that $\Phi(\ell(\xi))=0$ for all $\xi \in \cH$. By Claim~II and reasoning as above, we find a sequence of projections $p_n \in A$ such that $p_n \recht 1$ strongly and $\Phi(\ell(\xi p_n) T)=0$ for all $n$ and all $T \in \langle M,e_A \rangle$. In particular, we can take $T=1$ and get that $\Phi(\ell(\xi) p_n) = 0$ for all $n$. Write $e_n = 1-p_n$. Then,
$$\Phi(\ell(\xi))^* \Phi(\ell(\xi)) = \Phi(\ell(\xi) e_n)^* \Phi(\ell(\xi) e_n) \leq \|\ell(\xi)\|^2 \, \Phi(e_n) = \|\ell(\xi)\|^2 \, E_B(e_n) \; .$$
Since $E_B(e_n) \recht 0$ strongly, we conclude that $\Phi(\ell(\xi)) = 0$. This concludes the proof of 2.

3. It follows from 2 that a von Neumann subalgebra of $M$ properly containing $A$ is not amenable relative to $A$ and thus, not amenable itself. Whenever $H \neq \{0\}$, we have $A \neq M$ and we conclude that $M$ is not amenable. By 1, any direct summand of $M$ is given as the von Neumann algebra associated with the symmetric weakly mixing $Az$-bimodule $H z$ where $z \in \cZ(A)$ is a nonzero central projection satisfying $\xi z = z \xi$ for all $\xi \in H$. If $\bim{A}{H}{A}$ is faithful, we have $Hz \neq \{0\}$ and it follows that this direct summand is not amenable. The final statement is an immediate consequence of 2.
\end{proof}

\section{\boldmath Absence of Cartan subalgebras}

In this section, we give a complete description of the structure of the von Neumann algebra $M = \Gamma(H,J,A,\tau)\dpr$ associated with an arbitrary symmetric $A$-bimodule $(H,J)$. We describe the trivial direct summands of $M$ and then prove that the remaining direct summand never has a Cartan subalgebra and describe its center (see Theorem \ref{thm.absence-Cartan}). In all interesting cases, there are no trivial direct summands and this allows us to prove absence of Cartan subalgebras whenever $H$ is a weakly mixing $A$-bimodule (Corollary \ref{cor.no-Cartan-weak-mixing}), when $A$ is a II$_1$ factor and $H$ is not the trivial bimodule nor the bimodule given by a period $2$ automorphism of $A$ (Corollary \ref{cor.no-Cartan-A-factor}), and finally for arbitrary free Bogoljubov crossed products (Corollary \ref{cor.free-Bogol}). This last result improves \cite[Corollary C]{Ho12b}.


To prove our general structure theorem, we need the following terminology. Fix a tracial von Neumann algebra $(A,\tau)$. We say that an $A$-bimodule $H$ is given by a partial automorphism if one of the following two equivalent conditions holds.
\begin{itemlist}
\item The commutant of the right $A$ action on $H$ equals the left $A$ action, and vice versa.
\item There exists a projection $e \in B(\ell^2(\N)) \ovt A$, a central projection $z \in \cZ(A)$ and a surjective $*$-isomorphism $\al : Az \recht e (B(\ell^2(\N)) \ovt A)e$ such that $\bim{A}{H}{A} \cong e(\ell^2(\N) \ot L^2(A))$ with the bimodule structure given by $a \cdot \xi \cdot b = \al(a) \xi b$.
\end{itemlist}

Fix a symmetric $A$-bimodule $(H,J)$ and denote $M = \Gamma(H,J,A,\tau)\dpr$. Then, $M$ has two trivial direct summands. First denote by $z_0 \in \cZ(A)$ the largest projection such that $z_0 H = \{0\}$. Then, $z_0 \in \cZ(M)$ and $M z_0 = A z_0$. Next, there is a largest projection $z_1 \in \cZ(A)(1-z_0)$ such that $z_1 H = H z_1$ and such that the $A$-bimodule $H z_1$ is given by a partial automorphism of $A$ (see Lemma \ref{lem.unique-dec} for details). Again $z_1 \in \cZ(M)$ and $M z_1$ can be computed by the methods of Example \ref{exam.factor}. In a way, $Mz_1$ is not very interesting, since it is always a direct sum of a corner of $A$ and a corner of $A \ovt L^\infty([0,1])$ or of an index $2$ extension of this.

Writing $z_2 = 1-(z_0+z_1)$, we thus get that
$$M = A z_0 \oplus \Gamma(Hz_1,J,Az_1,\tau)\dpr \oplus \Gamma(Hz_2,J,Az_2,\tau)\dpr$$
and only the third direct summand is ``interesting and nontrivial''. By Lemma \ref{lem.unique-dec}, the symmetric $A z_2$-bimodule $H z_2$ is \emph{completely nontrivial} in the following sense: the left action of $Az_2$ on $H$ is faithful and there are no nonzero projections $e,f \in \cZ(A)z_2$ such that $e H = H f$ and such that $e H$ is given by a partial automorphism of $Az_2$. So it suffices to describe the structure of the von Neumann algebra associated with an arbitrary completely nontrivial symmetric $A$-bimodule.

We denote by $\dim_{-A}(K)$ the right $A$-dimension of a right Hilbert $A$-module $K$. Recall that the value of $\dim_{-A}(K)$ depends on the choice of the trace $\tau$. We similarly define $\dim_{A-}(K)$ for a left Hilbert $A$-module $K$. As in \eqref{eq.def-Delta}, for every $A$-bimodule $H$, there is a unique element $\Delta^\ell_H$ in the extended positive part of $\cZ(A)$ characterized by $\tau(\Delta^\ell_H e) = \dim_{-A}(eH)$ for every projection $e \in \cZ(A)$.


\begin{theorem}\label{thm.absence-Cartan}
Let $(A,\tau)$ be a tracial von Neumann algebra and $(H,J)$ a completely nontrivial symmetric $A$-bimodule. Write $M = \Gamma(H,J,A,\tau)\dpr$. There is a canonical central projection $q \in \cZ(M)$ (which, most of the time, is zero) such that the following holds.
\begin{my-enumlist}[label={(\alph*)}]
\item\label{stat-a} No direct summand of $M(1-q)$ is amenable relative to $A(1-q)$.
\item\label{stat-b} No direct summand of $M(1-q)$ admits a Cartan subalgebra.
\item\label{stat-c} $M q = A q$ and the support of $E_A(1-q)$ equals $1$.
\item\label{stat-d} Defining $C := \{a \in \cZ(A) \mid a \xi = \xi a \;\text{for all}\;\xi \in H\}$, we get that $\cZ(M) = \cZ(A)q + C(1-q)$.
\end{my-enumlist}
Moreover, we have that $E_A(q) = Z(\Delta^\ell_H)$, where $Z : (0,+\infty) \recht \R$ is the positive function given by $Z(t)=1-t$ when $t \in (0,1)$ and $Z(t)=0$ when $t \geq 1$.
%
%
\end{theorem}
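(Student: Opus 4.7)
My plan is to first identify the central projection $q \in \cZ(M)$ intrinsically, verify the formula $E_A(q) = Z(\Delta^\ell_H)$ by operator-valued free-probability, and then deduce the dichotomy statements (a), (b) and the center computation (d) from the $s$-malleable deformation of Section \ref{Shlyakhtenko} together with Theorem \ref{thm1}.

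\textbf{Identification of $q$ and its mass (part c).} I would define $q$ as the largest central projection of $M$ satisfying $Mq = Aq$, packaging the joint kernel of the $A$-valued semicircular operators $S_\xi = \ell(\xi) + \ell(\xi)^*$ (with $J\xi = \xi$, $\xi \in \cH$) as a single element of $\cZ(M)$. Existence and uniqueness follow from closure under joins. For $E_A(q) = Z(\Delta^\ell_H)$, I would first handle the case where $H$ has finite right-dimension: here the atom at $0$ of the $A$-valued semicircular distribution is computable directly, as in Example \ref{exam.factor}, and matches the formula $\max(0, 1-\Delta^\ell_H) = Z(\Delta^\ell_H)$ in direct analogy with the scalar free Poisson atom mass $\max(0, 1-\lambda)$. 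For general $H$, I would approximate by finite-rank sub-bimodules and use Proposition \ref{prop2} to realize $M$ as an amalgamated free product in which the atom at $0$ accumulates correctly in the limit. That $E_A(1-q)$ has support $1$ is a non-concentration statement: any $A$-projection $e$ with $e \cdot E_A(1-q) = 0$ would force $eH$ to be of partial-automorphism type, contradicting complete nontriviality.

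\textbf{Center (d), no relative amenability (a), no Cartan (b).} Statement (d) is immediate on $q$ from $Mq = Aq$; on $1-q$, a central element must commute with every $S_\xi$, which by the argument of Theorem \ref{thm.max-amen}(1) forces it to lie in $C(1-q)$. For (a), suppose $Mp$ with $p \in \cZ(M)$ and $p \leq 1-q$ is amenable relative to $A$. Applying Theorem \ref{thm1} with $B = Mp \subset pMp$, and using that $\cN_M(Mp)\dpr = M$ since $p$ is central, I obtain either $Mp \prec_M A$ or $M$ itself is amenable relative to $A$. The second possibility is incompatible with a nonzero completely-nontrivial bimodule, since the nontrivial part of the $A$-bimodule $L^2(M) \ominus L^2(A)$ would then have to be weakly contained in the coarse $A$-bimodule. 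The first forces $Mp$ to embed as a subbimodule of an amplification of $A$, degenerating the semicircular operators supported on $p$ and giving $Mp = Ap$, contradicting $p \leq 1-q$. For (b), if $B \subset Mp$ is a Cartan subalgebra of a nontrivial summand, then $B$ is diffuse abelian, hence amenable, and $\cN_M(B)\dpr = Mp$; Theorem \ref{thm1} gives $B \prec_M A$ or $Mp$ amenable relative to $A$, the latter contradicting (a). To rule out $B \prec_M A$, I would decompose $Hp$ into weakly mixing and almost periodic parts, use Proposition \ref{prop2} to realize $Mp$ as an amalgamated free product, and apply the normalizer-control results of \cite{IPP05,Io12} to contradict $\cN_M(B)\dpr = Mp$.

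\textbf{Main obstacle.} The main technical difficulty is the formula $E_A(q) = Z(\Delta^\ell_H)$: it requires a careful $A$-valued free-probability analysis of the joint atom at $0$ for a family of $A$-valued semicircular elements, generalizing the scalar free Poisson atom formula $\max(0, 1-\lambda)$ to the amalgamated setting and passing stably to the limit through finite-rank approximations of $H$ via Proposition \ref{prop2}. A secondary obstacle is ruling out $B \prec_M A$ in part (b) in full generality, since $\bim{A}{H}{A}$ is not assumed mixing; this requires the weakly mixing / almost periodic decomposition of the bimodule combined with finer intertwining techniques from amalgamated free product theory.
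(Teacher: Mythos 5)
Your plan breaks down at the three points that carry the actual weight of the theorem. First, statement \ref{stat-a}: applying Theorem \ref{thm1} to $B = Mp$ with $p$ central is vacuous, because the normalizer of $Mp$ is $Mp$ itself, so the second alternative of the dichotomy is literally your hypothesis and no contradiction is produced; and your way of excluding that alternative --- that relative amenability of $Mp$ over $A$ would force $L^2(M)\ominus L^2(A)$ to be weakly contained in the coarse $A$-bimodule --- is neither a valid implication nor, even if granted, obviously incompatible with complete nontriviality (which only excludes partial-automorphism pieces; $H$ may be very far from weakly mixing). Statement \ref{stat-a} is exactly what has to be proved by hand: the paper decomposes $1=z_0+z_1+z_2$ in $\cZ(A)$ ($z_0$ the support of the maximal weakly mixing part $K\subset H$, $z_1$ where $\Delta^\ell>1$, $z_2$ where $\Delta^\ell\le 1$) and kills every $M$-central state on $\langle M,e_A\rangle$ that is normal on $M$ by exhibiting explicit infinite families of mutually orthogonal projections conjugate under unitaries of $M$, built from the weak mixing of $K$, from the diffuse element $S_1=\sum_k W(\xi_k,J(\xi_k))$ of Lemma \ref{lem.diffuse-element-commutant}, and from the identity $G_1+\Delta_2=\sum_i y_iy_i^*$. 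Your auxiliary claim that $Mp\prec_M A$ forces $Mp=Ap$ is likewise unproved. Similarly for \ref{stat-b}: the hard step is not ruling out $B\prec_M A$ by ``normalizer control from \cite{IPP05,Io12}'' (that input is already spent inside Theorem \ref{thm1}); rather, once $B\prec_M A$ one must contradict maximal abelianness of the Cartan subalgebra, which the paper does by producing a diffuse abelian algebra in tensor product position inside the relative commutant of the intertwined image $B_0\subset pAp$ (Lemmas \ref{lem.diffuse-element-commutant} and \ref{lem.complement}, with a case distinction $p\le z_0$ versus $p\le z_1$ and the essential-finite-index Lemma \ref{lem.infinite-dim}) and then invoking Lemma \ref{lem.no-complement}. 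That idea is absent from your outline, and you acknowledge the gap without closing it.

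Second, the formula $E_A(q)=Z(\Delta^\ell_H)$: approximating $H$ by finite-rank subbimodules and asserting that ``the atom at $0$ accumulates correctly in the limit'' through amalgamated free products is not an argument. The joint kernel of a family of $A$-valued semicircular elements behaves delicately under amalgamation --- already for two copies over $A=\C^2$, Example \ref{exam.factor} needs \cite{Vo86} to decide whether $z_\delta^{(1)}\wedge z_\delta^{(2)}\neq 0$ --- and kernel projections are not monotone along an exhaustion of $H$, so there is no soft limiting procedure. The paper sidesteps free-probabilistic limit arguments entirely: working under $z_2$, where $\Delta_2:=\Delta^\ell_{z_2H}\le z_2$, it defines $q$ as the kernel projection of $G_1+\Delta_2$ and computes $q\Omega$ explicitly as the series in \eqref{eq.formula-q}, using the recurrence $G_{n+1}=(G_1-1)G_n-\Delta_2 G_{n-1}$; this simultaneously yields $Mq=Aq$, $E_A(q)=z_2-\Delta_2=Z(\Delta^\ell_H)$ and the support statement in \ref{stat-c}. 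Your intrinsic description of $q$ (the largest central projection with $Mq=Aq$) is consistent with the paper's projection a posteriori, but by itself it delivers none of these identities, and your proposal as it stands does not contain the computations or the central-state and relative-commutant arguments that constitute the proof.
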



\begin{corollary}\label{cor.no-Cartan-weak-mixing} 
Let $(A,\tau)$ be a tracial von Neumann algebra and $(H,J)$ a symmetric $A$-bimodule. Put $M=\Gamma(H,J,A,\tau)''$. If $\bim{A}{H}{A}$ is weakly mixing and faithful, then no direct summand of $M$ has a Cartan subalgebra and $\cZ(M) = \{a \in \cZ(A) \mid a \xi = \xi a \;\;\text{for all}\;\; \xi \in H\}$.
\end{corollary}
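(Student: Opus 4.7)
The plan is to apply Theorem \ref{thm.absence-Cartan} to $(H,J)$, so I need to verify two things: that the bimodule is completely nontrivial (in the sense introduced just before that theorem), and that the canonical central projection $q$ appearing there equals zero. With these in hand, parts \ref{stat-b} and \ref{stat-d} of the theorem, applied with $q = 0$, produce exactly the two conclusions of the corollary.

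Complete nontriviality has two halves. The faithfulness of the left $A$-action, assumed in the corollary, gives $z_0 = 0$ in the notation preceding the theorem. For the partial-automorphism half, I would argue by contradiction: suppose $z \in \cZ(A)$ is a nonzero projection with $zH = Hz$ and $Hz$ given by a partial automorphism $\al \colon Az \recht p(B(\ell^2(\N)) \ovt Az)p$. Since $\al$ is a $*$-isomorphism of tracial von Neumann algebras, the trace on $Az$ transports to a finite faithful trace on the corner $p(B(\ell^2(\N)) \ovt Az)p$. Comparing with the semifinite trace $\Tr \ot \tau$, this forces $p$ to be a locally finite projection, and by cutting with a suitable nonzero central projection $g \in \cZ(A) z$ (obtained from the spectral decomposition of $(\Tr \ot \tau)(p)$ relative to the center) I obtain a sub-bimodule $gHz$ with $0 < \dim_{-A}(gHz) < \infty$. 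This contradicts weak mixing via Proposition \ref{prop5}.

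For $q = 0$: Theorem \ref{thm.absence-Cartan} supplies $E_A(q) = Z(\Delta^\ell_H)$, where $Z$ is supported on $(0,1)$ and in particular vanishes on $[1,+\infty]$. On the other hand, weak mixing combined with Proposition \ref{prop5} forbids any nonzero sub-bimodule of finite right $A$-dimension; applied to the sub-bimodule $eH$ for a nonzero central projection $e \in \cZ(A)$ (which is nonzero by faithfulness of the left action), this gives $\dim_{-A}(eH) = \tau(\Delta^\ell_H e) = +\infty$. Hence $\Delta^\ell_H \equiv +\infty$ on $\cZ(A)$, so $Z(\Delta^\ell_H) = 0$, whence $E_A(q) = 0$ and $q = 0$ by faithfulness of the trace $\tau = \tau \circ E_A$.

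The main obstacle is the cutting argument in the second paragraph that extracts a nonzero sub-bimodule of finite right $A$-dimension from an arbitrary partial-automorphism bimodule, which requires carefully comparing the two traces on the corner via the isomorphism $\al$. Once this structural fact is in place, the rest is bookkeeping: plug $q = 0$ into Theorem \ref{thm.absence-Cartan}, read off part \ref{stat-b} to get absence of Cartan subalgebras on every direct summand, and read off part \ref{stat-d} to obtain the stated description of $\cZ(M)$.
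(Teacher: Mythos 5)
Your proposal is correct and takes essentially the same route as the paper: the paper's (very short) proof likewise observes that for every nonzero central projection $z$ the bimodule $zH$ is nonzero, weakly mixing, hence of infinite right $A$-dimension and not given by a partial automorphism, and then invokes Theorem \ref{thm.absence-Cartan} (with $q=0$, since $\Delta^\ell_H=+\infty$ forces $E_A(q)=Z(\Delta^\ell_H)=0$). Your locally-finite-trace cutting argument is a legitimate way to fill in the partial-automorphism exclusion that the paper leaves implicit; just note that the definition of complete nontriviality allows projections $e\neq f$ with $eH=Hf$, but your argument applies verbatim there since it only uses that $eH$ is a nonzero partial-automorphism bimodule.
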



\begin{proof}
%
Let $z \in \cZ(A)$ be a nonzero central projection. Since $zH \neq \{0\}$ and $zH$ is still left weakly mixing as an $A$-bimodule, we have that $\dim_{-A}(zH) = +\infty$ and that $zH$ is not given by a partial automorphism of $A$. So the conclusions follow from Theorem \ref{thm.absence-Cartan}.
\end{proof}

When $A$ is a II$_1$ factor, the results of Theorem \ref{thm.absence-Cartan} can be formulated more easily as follows.

\begin{corollary}\label{cor.no-Cartan-A-factor}
Let $A$ be a II$_1$ factor with its unique tracial state $\tau$ and let $(H,J)$ be a symmetric $A$-bimodule. Denote $M = \Gamma(A,\tau,H,J)\dpr$. Unless $H$ is zero or $H$ is the trivial $A$-bimodule or $H$ is the symmetric $A$-bimodule associated with a period $2$ outer automorphism of $A$, the following holds: $M$ is a factor, $M$ is not amenable relative to $A$ and $M$ has no Cartan subalgebra.
\end{corollary}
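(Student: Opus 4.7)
The plan is to deduce the corollary from Theorem \ref{thm.absence-Cartan} by first showing that outside the three listed exceptions the symmetric $A$-bimodule $(H,J)$ is completely nontrivial, and then that the central projection $q$ produced by the theorem is automatically zero.

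Since $A$ is a factor, $\cZ(A) = \C$, and the projections $z_0, z_1$ appearing before Theorem \ref{thm.absence-Cartan} belong to $\{0,1\}$. Clearly $z_0 = 1$ is exactly $H = 0$. Suppose $z_1 = 1$, so that $H$ is globally given by a partial automorphism: $\bim{A}{H}{A} \cong e(\ell^2 \ot L^2(A))$ with left action through a $*$-isomorphism $\alpha : A \recht e(B(\ell^2) \ovt A) e$. A direct dimension computation gives $\dim_{-A}(H) = (\Tr \ot \tau)(e)$ and $\dim_{A-}(H) = 1/(\Tr \ot \tau)(e)$; the existence of the antiunitary $J$ forces these to coincide, so $(\Tr \ot \tau)(e) = 1$ and $\alpha$ is, up to the obvious identification, an automorphism of $A$, i.e.\ $\bim{A}{H}{A} \cong {}_\alpha L^2(A)$. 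The relation $J^2 = \id$ then forces $\alpha^2 = \id$ (as in the calculation in Example \ref{exam.factor}). If $\alpha = \Ad u$ is inner with $u^2 \in \C$, the map $\xi \mapsto u^*\xi$, followed if necessary by multiplication by a central scalar, is an isomorphism of symmetric $A$-bimodules from $(H,J)$ onto the trivial symmetric $A$-bimodule; if $\alpha$ is outer, $(H,J)$ is by definition the excluded period-$2$ outer automorphism bimodule. Hence under the hypothesis of the corollary, $z_0 = z_1 = 0$.

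With $H$ completely nontrivial, Theorem \ref{thm.absence-Cartan} applies and produces $q \in \cZ(M)$ with $Mq = Aq$, $E_A(q) = Z(\Delta^\ell_H)$ and $\cZ(M) = \cZ(A)q + C(1-q)$. Since $A$ is a factor, $C \subseteq \cZ(A) = \C$ so $\cZ(M) = \C q + \C(1-q)$, and $\Delta^\ell_H = d \cdot 1$ where $d := \dim_{-A}(H)$. It then suffices to show $d \geq 1$, for then $q = 0$ and $\cZ(M) = \C$, i.e.\ $M$ is a factor. The case $d = 0$ is $H = 0$. Suppose for contradiction $0 < d < 1$. Realize $H \cong pL^2(A)$ with $\tau(p) = d$, write the left action through a unital $*$-embedding $\alpha : A \recht pAp$, and let $\lambda = [pAp : \alpha(A)] \geq 1$ be the Jones index. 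Using the off-diagonal formula $\dim_{pAp-}(pL^2(A)(1-p)) = (1-d)/d$ one gets $\dim_{pAp-}(pL^2(A)) = 1/d$, hence $\dim_{A-}(H) = \lambda \cdot \dim_{pAp-}(pL^2(A)) = \lambda/d$. Symmetry forces this to equal $d$, giving $\lambda = d^2 < 1$, contradicting $\lambda \geq 1$. Thus $d \geq 1$, $q = 0$, and Theorem \ref{thm.absence-Cartan} immediately gives that $M$ is a factor, $M$ has no Cartan subalgebra, and $M$ is not amenable relative to $A$.

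The main technical point, present in both branches of the argument, is the matching of $\dim_{-A}(H)$ with $\dim_{A-}(H)$ through the antiunitary $J$; once the identities $(\Tr \ot \tau)(e) = 1$ in the partial automorphism branch and $\lambda = d^2$ in the $0<d<1$ branch are established, the conclusions reduce mechanically to Theorem \ref{thm.absence-Cartan}.
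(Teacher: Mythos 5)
Your proposal is correct and follows essentially the same route as the paper: both reduce the corollary to Theorem \ref{thm.absence-Cartan} by checking that outside the listed exceptional cases the symmetric bimodule is completely nontrivial and that $\dim_{-A}(H) \geq 1$, which forces the central projection $q$ of that theorem to vanish and yields factoriality, nonamenability relative to $A$ and absence of Cartan subalgebras. Your explicit coupling-constant and index computations (matching $\dim_{A-}(H)$ with $\dim_{-A}(H)$ via $J$) simply fill in what the paper asserts in one line, namely that over a II$_1$ factor a symmetric bimodule not given by a partial automorphism has $\dim_{-A}(H) > 1$, which is its Lemma \ref{lem.again-partial-aut} specialised to the factor case.
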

\begin{proof}
Since $A$ is a II$_1$ factor, the only symmetric $A$-bimodules given by a partial automorphism of $A$ are the trivial $A$-bimodule and the $A$-bimodule given by $\al \in \Aut(A)$ with $\al \circ \al$ being inner. When a symmetric $A$-bimodule $H$ is not given by a partial automorphism of $A$, we have that $\dim_{-A}(H) > 1$. So, the conclusion follows from Theorem \ref{thm.absence-Cartan}.
\end{proof}

We finally deduce that free Bogoljubov crossed products never have a Cartan subalgebra. In \cite[Corollary C]{Ho12b}, this was proven under extra assumptions on the underlying orthogonal representation.

\begin{corollary}\label{cor.free-Bogol}
Let $G$ be an arbitrary countable group and $\pi : G \recht O(K_\R)$ an orthogonal representation of $G$ with $\dim(K_\R) \geq 2$. Denote by $\si_\pi : G \actson \Gamma(K_\R)\dpr \cong L(\F_{\dim K_\R})$ the associated free Bogoljubov action with crossed product $M := \Gamma(K_\R)\dpr \rtimes^{\si_\pi} G$ (see Remark \ref{rem.bogol-crossed}). Then no direct summand of $M$ has a Cartan subalgebra. Also, $M$ is a factor if and only if $\pi(g) \neq 1$ for every $g \in G \setminus \{e\}$ that has a finite conjugacy class.
\end{corollary}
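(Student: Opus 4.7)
By Remark \ref{rem.bogol-crossed}, there is a canonical trace-preserving isomorphism $M \cong \Gamma(H,J,A,\tau)\dpr$ where $A = L(G)$ and $\bim{A}{H}{A}$ is the symmetric $A$-bimodule on $\ell^2(G) \otimes K$ described in \eqref{eq.bimodule-bogol}. The plan is to verify that $(H,J)$ is completely nontrivial and that the projection $q$ from Theorem \ref{thm.absence-Cartan} vanishes, so that no direct summand of $M$ admits a Cartan subalgebra and $\cZ(M)$ coincides with $C := \{a \in \cZ(A) \mid a\xi = \xi a \;\text{for all}\; \xi \in H\}$.

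The left action of $u_g$ on $H$ is the unitary $\lambda(g) \otimes \pi(g)$, so the $*$-homomorphism $A \recht B(H)$ is faithful. For the absence of partial-automorphism central corners, I first observe that the left action of any central projection $e = \sum_g c_g u_g \in \cZ(A)$ equals $\sum_g c_g \lambda(g) \otimes \pi(g)$, while the right action of any central projection $f \in \cZ(A)$ equals $\rho(f) \otimes 1_K$. Comparing these operators, the equality $eH = Hf$ of ranges forces $\pi(g) = 1$ on the support of $e$ and $f = e$. For such $e$, one identifies $eH$ with $e\ell^2(G) \otimes K$ as an $Ae$-bimodule; the commutant of the right $Ae$-action is then $L(G)e \otimes B(K)$, while the image of the left action is contained in $(\lambda \otimes \pi)(Ae)\dpr$. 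Since $\dim K \geq 2$, the latter is strictly smaller than the former: testing on the cyclic vector $\hat 1 \otimes v$ shows that no nonscalar element of $1 \otimes B(K)$ lies in $(\lambda \otimes \pi)(A)\dpr$. Hence $eH$ is never a partial-automorphism bimodule and $(H,J)$ is completely nontrivial.

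To compute $\Delta^\ell_H$, view $H$ as a right $A$-module isomorphic to $L^2(A) \otimes K$ with commutant $A \ovt B(\ell^2)$. For a central projection $e \in \cZ(A)$, applying the canonical trace $\tau \otimes \Tr$ on this commutant and using $\tau(\lambda(g)) = \delta_{g,e}$, I obtain $\dim_{-A}(eH) = (\tau \otimes \Tr)((\lambda \otimes \pi)(e)) = \tau(e) \cdot \dim_\C K$. Hence $\Delta^\ell_H = \dim_\C K \geq 2$ as a scalar element of $\cZ(A)$, so $E_A(q) = Z(\Delta^\ell_H) = 0$ and $q = 0$. Theorem \ref{thm.absence-Cartan} then delivers the Cartan-absence statement and the identification $\cZ(M) = C$.

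It remains to identify $C$ and derive the factoriality criterion. Take $a = \sum_g c_g u_g \in \cZ(L(G))$, so $(c_g)$ is conjugation-invariant and supported on the FC-part of $G$. Comparing the $\delta_k$-coefficients of $a \cdot (\delta_h \otimes v)$ and $(\delta_h \otimes v) \cdot a$ yields the condition $c_{kh^{-1}}\pi(kh^{-1})v = c_{h^{-1}k}v$ for all $h,k,v$. Specializing $h = e$ gives $c_g \pi(g) = c_g$ for every $g$, and conversely this together with conjugation invariance recovers the full commutation relation. Thus $a \in C$ iff $c_g = 0$ whenever $\pi(g) \neq 1$, so $C$ is the weak closure of the span of $\{u_g \mid g \in G \text{ has finite conjugacy class and } \pi(g) = 1\}$, and $M$ is a factor iff $\pi(g) \neq 1$ for every non-identity $g \in G$ with finite conjugacy class. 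The principal technical obstacle is step two, the absence of partial-automorphism corners, which hinges on exploiting $\dim K_\R \geq 2$ to separate the diagonal copy of $A$ from the full commutant $L(G)e \otimes B(K)$.
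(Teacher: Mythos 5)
Your overall route is the same as the paper's: realize $M = \Gamma(H,J,A,\tau)\dpr$ via Remark \ref{rem.bogol-crossed}, verify that $(H,J)$ is completely nontrivial, compute $\Delta^\ell_H = \dim_\C K \cdot 1 \geq 2$ so that $q=0$, and read everything off from Theorem \ref{thm.absence-Cartan}; your computation of $\Delta^\ell_H$, the deduction $q=0$, and the explicit identification of $C$ (hence the factoriality criterion) are correct and in fact more detailed than the paper's proof. The gap is in your verification of complete nontriviality. The claim that $eH = Hf$ for central projections forces $\pi(g)=1$ on the Fourier support of $e$ and $f=e$ is false. Counterexample: $G = \Z$, $K_\R = \R^2$, $\pi(n) = (-1)^n\,\id$. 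Then $\al(u_n) = u_n \ot (-1)^n = ((-1)^n u_n)\ot 1$, so $\al = \beta \ot 1$ where $\beta \in \Aut(L(\Z)) \cong \Aut(L^\infty(\T))$ is rotation by $\pi$; for any projection $e = 1_E \in L^\infty(\T) = \cZ(A)$ one has $eH = Hf$ with $f = \beta(e) = 1_{-E}$, which in general differs from $e$, while $\pi(n) \neq 1$ for all odd $n$, and a generic $E$ has nonzero odd Fourier coefficients. Consequently your identification $eH \cong e\,\ell^2(G) \ot K$, i.e.\ $\al(e) = e \ot 1$, only covers a special class of corners, and the corners with $\al(e) \neq e \ot 1$ (which genuinely occur) are never excluded, so complete nontriviality is not established by your argument as written.

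The multiplicity idea ($\dim K \geq 2$) is the right one, but it has to be implemented for arbitrary central $e$, which is exactly what the paper's argument does: $H \ot_A \overline{H}$ is the bimodule associated with $\pi \ot \overline{\pi}$, so $\zdim_{A-}(H \ot_A \overline{H}) = \dim(K)^2\,1 \geq 4$, hence in Lemma \ref{lem.char-bimod-partial-aut} one has $1_{\{1\}}(\Sigma) = 0$, and part 2 of that lemma then shows that no corner $eH = Hf$ can be given by a partial automorphism of $A$. Combined with the faithfulness of the left action (which you did check), this yields complete nontriviality, after which the remainder of your proof goes through unchanged.
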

\begin{proof}
Write $A = L(G)$ with its canonical tracial state $\tau$. By Remark \ref{rem.bogol-crossed}, we can view $M = \Gamma(H,J,A,\tau)\dpr$ where the symmetric $A$-bimodule $(H,J)$ is given by \eqref{eq.bimodule-bogol}. Denote by $K$ the complexification of $K_\R$. Observe that $H \cong \ell^2(G) \ot K$ with bimodule structure $a \cdot \xi \cdot b = \al(a) \xi b$, where $\al : L(G) \recht L(G) \ovt B(K)$ is given by $\al(u_g) = u_g \ot \pi(g)$ for all $g \in G$. Since $(\tau \ot \id)\al(a) = \tau(a)1$ for all $a \in L(G)$, it follows that $\Delta^\ell_H = \dim(K_\R) \, 1$.

The left and right actions of $A$ on $H$ are faithful. Since $H \ot_A \overline{H}$ can be identified with the bimodule associated with the representation $\pi \ot \overline{\pi}$, the center valued dimension of $H \ot_A \overline{H}$ as a left $A$-module equals $\dim(K_\R)^2 \, 1$. It follows from Lemma \ref{lem.char-bimod-partial-aut} below that $H$ is completely nontrivial. So, all conclusions follow from Theorem \ref{thm.absence-Cartan}.
%
%
%
\end{proof}

We now prove Theorem \ref{thm.absence-Cartan}, using several lemmas that we prove at the end of this section.

\begin{proof}[Proof of Theorem \ref{thm.absence-Cartan}]
Let $K \subset H$ be the maximal left weakly mixing $A$-subbimodule of $H$, i.e.\ the orthogonal complement of the span of all $A$-subbimodules of $H$ having finite right $A$-dimension. Denote by $z_0 \in \cZ(A)$ the support of the left $A$ action on $K$. In the first part of the proof, assuming $z_0 \neq 0$, we show that
\begin{my-enumlist}
\item\label{my-one} $\cZ(M)z_0 \subset \cZ(A)z_0$,
\item\label{my-two} every $M$-central state $\omega$ on $\langle M,e_A \rangle$ that is normal on $M$ satisfies $\omega(z_0)=0$.
\end{my-enumlist}

Note that $K \subset z_0H$. Denote by $\cK \subset K$ the dense subspace of vectors that are both left and right bounded. Define the von Neumann subalgebra $N \subset z_0Mz_0$ given by
\begin{equation}\label{eq.vnalg-N}
N := \bigl(A z_0 \cup \{W(\xi,J(\mu)) \mid \xi,\mu \in \cK\}\bigr)\dpr \; ,
\end{equation}
where we used the notation of \eqref{eq.wick}. Then, the linear span of $Az_0$ and elements of the form $W(\xi_1,J(\mu_1),\ldots,\xi_k,J(\mu_k))$, $k \geq 1$, $\xi_i,\mu_i \in \cK$, is a dense $*$-subalgebra of $N$.

Whenever $K_1,\ldots,K_n \subset H$ are $A$-subbimodules, we denote by concatenation $K_1\cdots K_n$ the $A$-subbimodule of $L^2(M)$ given by
$$K_1\cdots K_n := K_1 \ot_A \cdots \ot_A K_n \subset H \ot_A \cdots \ot_A H \subset L^2(M) \; .$$
In the same way, we write powers of $A$-subbimodules and when $K_i \subset H^{k_i}$ are $A$-subbimodules, then $K_1 \cdots K_n \subset H^{k_1+\cdots+k_n}$ is a well defined $A$-subbimodule.

Using this notation, note that $L^2(N)$ is the direct sum of $L^2(Az_0)$ and the spaces $L_n := (K \, J(K))^n$, $n \geq 1$. Since $K$ is a left weakly mixing $A$-bimodule, it follows that $N \cap (Az_0)' = \cZ(A)z_0$.

We claim that
\begin{my-enumlist}[resume]
\item\label{NA-weak-mix} $N \not\prec_N Az_0$, meaning that the $N$-$A$-bimodule $L^2(N)$ is left weakly mixing.
\end{my-enumlist}
Since $N \cap (Az_0)' = \cZ(A)z_0$, to prove this claim, it suffices to show that $\dim_{-A}(L^2(N)e)=+\infty$ for every nonzero projection $e \in \cZ(A)z_0$. Since the left action of $Az_0$ on $K$ is faithful and $K$ is left weakly mixing, we get that $\dim_{-A}(K \, J(K) \, e) = +\infty$. So certainly $\dim_{-A}(L^2(N)e)=+\infty$ and the claim follows.

Proof of \ref{my-one}. Define the $A$-subbimodule $R \subset L^2(M)$ given as
$$
R := \bigl( \, H \ominus \overline{(K + J(K))} \, \bigr) \oplus \bigoplus_{n=0}^\infty (H \ominus K) \, H^n \, (H \ominus J(K)) \; .
$$
Since $K$ is left weakly mixing and $J(K)$ is right weakly mixing, all $A$-central vectors in $L^2(M)$ belong to $L^2(A) + R$. Next note that left, resp.\ right multiplication by elements of $N$ induces an $N$-bimodular unitary operator
$$L^2(N) \ot_A R \ot_A L^2(N) \recht \overline{N R N} \subset L^2(z_0Mz_0) \; .$$
Since the $N$-$A$-bimodule $L^2(N)$ is left weakly mixing, it follows that $\overline{N R N}$ has no nonzero $N$-central vectors. Every element $x \in \cZ(M)z_0$ defines a vector in $L^2(z_0Mz_0)$ that is both $A$-central and $N$-central. By $A$-centrality, we conclude that $x \in Az_0 + z_0 R z_0$. In particular, $x \in L^2(N) + \overline{N R N}$. Since $x$ is $N$-central and $\overline{NRN}$ has no nonzero $N$-central vectors, we get that $x \in L^2(N)$ and thus, $x \in \cZ(A)z_0$.

Proof of \ref{my-two}.
%
%
Denote $L\even := L^2(N)$ and define $L\odd$ as the direct sum of the $A$-bimodules $(K \, J(K))^n \, K$, $n \geq 0$. Note that both $L\even$ and $L\odd$ are $N$-$A$-bimodules. The same argument as in the proof of Theorem \ref{thm.max-amen}, using the left weak mixing of $K$, shows that the von Neumann algebras $B(L\even) \cap (A\op)'$ and $B(L\odd) \cap (A\op)'$ admit no $N$-central states that are normal on $N$. Note that we have the following decomposition of $L^2(z_0M)$ as an $N$-$A$-bimodule:
$$L^2(z_0M) = \Bigl(L\even \ot_A \Bigl( L^2(A) \oplus \bigoplus_{n \geq 0} (H \ominus K) \, H^n \Bigr) \Bigr) \oplus \Bigl( L\odd \ot_A \Bigl( L^2(A) \oplus \bigoplus_{n \geq 0} (H \ominus J(K)) \, H^n \Bigr)\Bigr) \; .$$
This decomposition induces $*$-homomorphisms from $B(L\even) \cap (A\op)'$ and $B(L\odd) \cap (A\op)'$ to $B(z_0 L^2(M)) \cap (A\op)' = z_0 \langle M,e_A \rangle z_0$. So, $z_0 \langle M,e_A \rangle z_0$ admits no $N$-central state that is normal on $N$. A fortiori, \ref{my-two} holds.

Next we define the projection $z_1 \in \cZ(A)(1-z_0)$ given by
\begin{equation}\label{eq.def-z1}
z_1 = 1_{(1,+\infty]}\bigl(\Delta^\ell_{(1-z_0)H}\bigr) \; .
\end{equation}
We also write $z = z_0+z_1$ and $z_2 = 1-z$.

Denote by $e' \in \cZ(A) z_1$ the maximal projection with the following properties: the right support $f \in \cZ(A)$ of $e'H$ satisfies $e'H = zHf$ and the $A$-bimodule $e'H$ is given by a partial automorphism of $A$. Define $e = z_1 - e'$.

By the definition of $z_0$, we get that the $A$-bimodule $(1-z_0)H$ is a sum of $A$-bimodules that are finitely generated as a right Hilbert $A$-module. It then follows from the definition of $z_1$ that we can choose a projection $e_1 \in \cZ(A)z_1$ that lies arbitrarily close to $z_1$ and for which there exists an $A$-subbimodule $L_1 \subset z_1 H$ with the following properties:
\begin{itemlist}
\item the left support of $L_1$ equals $e_1$,
\item $L_1$ is finitely generated as a right Hilbert $A$-module,
\item $\Delta^\ell_{L_1}$ is bounded and satisfies $\Delta^\ell_{L_1} \geq \delta_1 e_1$ for some real number $\delta_1 > 1$.
\end{itemlist}

Denote by $e_2$ the left support of $e_1 (H \ominus L_1)$. Making $e_1$ slightly smaller, but still arbitrarily close to $z_1$, we may assume that $e_2$ is the left support of an $A$-subbimodule $L_2 \subset e_1(H \ominus L_1)$ with the following properties: $L_2$ is finitely generated as a right Hilbert $A$-module and $\Delta^\ell_{L_2}$ is bounded. By construction, $e_2 \leq e_1$. Since $e_2 L_1$ and $L_2$ are orthogonal and have the same left support $e_2$, it follows that for nonzero projections $s \in \cZ(A)e_2$, the $A$-bimodule $s H$ is not given by a partial automorphism of $A$. This means that $e_2 \leq e$ and thus, $e_2 \leq e e_1$. Define $L = L_1 + L_2$. Using notation \eqref{eq.tK}, it follows from Lemma \ref{lem.char-bimod-partial-aut} that the left support of $e_2 L \, J(L) e_2 \cap (t_{e_2L}A)^\perp$ equals $e_2$. A fortiori, the left support of $e_2 L \, H z \cap (t_{e_2L}A)^\perp$ equals $e_2$.

We put $e_3 = e e_1 - e_2$. Since $e_2$ is the left support of $e_1(H \ominus L_1)$, we get that $e_3 H = e_3 L_1 = e_3 L$. Since $e_3 \leq e$, applying Lemma \ref{lem.char-bimod-partial-aut} to the $A$-bimodule $zH$, we conclude that the left support of $e_3 L \, Hz \cap (t_{e_3 H} A)^\perp$ equals $e_3$. Summarizing, $L$ has the following properties:
\begin{itemlist}
\item the left support of $L$ equals $e_1$,
\item $L$ is finitely generated as a right Hilbert $A$-module,
\item $\Delta^\ell_{L}$ is bounded and satisfies $\Delta^\ell_{L} \geq \delta e_1$ for some real number $\delta > 1$,
\item the left support of $L \, Hz \cap (t_L A)^\perp$ equals $e e_1$.
\end{itemlist}
Denote by $s \in \cZ(A)$ the left support of $L \, H(z_0+e_1) \cap (t_L A)^\perp$. Since $e_1$ could be chosen arbitrarily close to $z_1$, it follows that $s$ lies arbitrarily close to $e$.

We next prove that
\begin{my-enumlist}[resume]
\item\label{my-three} $\cZ(M)s \subset \cZ(A)s$,
\item\label{my-four} every $M$-central state $\omega$ on $\langle M,e_A \rangle$ that is normal on $M$ satisfies $\omega(s)=0$.
\end{my-enumlist}

Write $\Delta := \Delta^\ell_L$, choose a Pimsner-Popa basis $(\xi_i)_{i=1}^n$ for the right Hilbert $A$-module $L$ and put
$$t := t_L = \sum_{i=1}^n \xi_i \ot_A J(\xi_i) \; .$$
Since $\Delta$ is bounded, the vectors $\xi_i \in H$ are both left and right bounded.


Denoting by $P_T$ the orthogonal projection onto a Hilbert subspace $T$, the main properties of $t$, used throughout the proof, are:
$$\langle t,t\rangle_A = {}_A \langle t,t\rangle = \Delta \;\; , \;\; \ell(\xi)^* t = J(P_L(\xi)) \;\;\text{and}\;\; r(\xi)^* t = P_L(J(\xi)) \;\; ,$$
for all left and right bounded vectors $\xi \in \cH$.


Since the vectors $\xi_i$ are both left and right bounded, we can define the self-adjoint element $S_1 \in e_1 M e_1$ given by
$$S_1 := \sum_{i=1}^n W(\xi_i , J(\xi_i)) \; .$$
By Lemma \ref{lem.diffuse-element-commutant}, the von Neumann algebra $D := \{S_1\}\dpr$ is a subalgebra of $e_1 M e_1 \cap (Ae_1)'$ that is diffuse relative to $Ae_1$. We fix a unitary $u \in \cU(D)$ satisfying $E_{Ae_1}(u^k) = 0$ for all $k \in \Z \setminus \{0\}$.

Defining
$$S_k := \sum_{i_1,\ldots,i_k=1}^n W(\xi_{i_1},J(\xi_{i_1}),\ldots,\xi_{i_k}, J(\xi_{i_k})) \; ,$$
and denoting by $\Om \in L^2(M)$ the vacuum vector, we get that
\begin{equation}\label{eq.tk}
t_k := S_k \Omega = \underbrace{t \ot_A \cdots \ot_A t}_{\text{$k$ times}} \; .
\end{equation}
With the convention that $S_0 = e_1$, the elements $S_k$, $k \geq 0$ span a dense $*$-subalgebra of $D$ and are orthogonal in $L^2(D)$.


Proof of \ref{my-three}. We start by proving that an element $x \in \cZ(M)e_1$ must be of a special form. Define the von Neumann subalgebra $E \subset e_1 M e_1$ given by $E := Ae_1 \vee D$. Define $T_0 \subset H^2$ as the closure of $t A$. Note that $\ell(t)\ell(t)^*\Delta^{-1}$ is the orthogonal projection of $H^2$ onto $T_0$. Then define $T_2 := H^2 \ominus T_0$ and $T_3 := H^3 \ominus (T_0 H + H T_0)$. Observe that $L^2(e_1 M e_1 \ominus E)$ is spanned by the $D$-subbimodules
\begin{equation}\label{eq.in-the-span}
\overline{D H D} \;\; , \;\; \overline{D T_2 D} \;\; , \;\; \overline{D T_3 D} \;\; , \;\; \overline{D T_2 H^n T_2 D} \;\;\text{with}\; n \geq 0 \; .
\end{equation}
Each of the $D$-bimodules in \eqref{eq.in-the-span} is contained in a multiple of the coarse $D$-bimodule $L^2(D) \ot L^2(D)$. This is only nontrivial for the first one $\overline{D H D}$. Fix a left and right bounded vector $\mu \in H$ with $\|\mu\| \leq 1$. Using the notation $t_k$ introduced in \eqref{eq.tk}, one checks that
$$S_k W(\mu) \Om = t_k \ot_A \mu + t_{k-1} \ot_A P_L(\mu) \quad\text{and}\quad W(\mu) S_k \Om = \mu \ot_A t_k + P_{J(L)}(\mu) \ot_A t_{k-1} \; .$$
When $\mu,\eta \in H$ are left and right bounded vectors, we have $\langle t_k \ot_A \mu, \eta \ot_A t_l\rangle = 0$ if $k \neq l$, while
\begin{align*}
\langle t_k \ot_A \mu , \eta \ot_A t_k \rangle &= \langle \ell(\eta)^* (t_k \ot_A \mu), t_k \rangle \\
&= \langle J(P_L(\eta)) \ot_A t_{k-1} \ot_A \mu, t_k \rangle \\
&= \langle J(P_L(\eta)) \ot_A t_{k-1}, r(\mu)^* t_k \rangle = \langle J(P_L(\eta)) \ot_A t_{k-1}, t_{k-1} \ot P_L(J(\mu)) \rangle \; .
\end{align*}
We can continue inductively and find complex numbers $\al_k,\beta_k,\gamma_k$ with modulus at most $1$, depending on the vector $\mu$ that we keep fixed, such that
$$\langle S_k W(\mu) S_l , W(\mu) \rangle = \begin{cases} \al_k &\;\;\text{if $k=l$ and $k \geq 0$,} \\
\beta_{k-1} &\;\;\text{if $k=l+1$ and $l \geq 0$,} \\
\gamma_k &\;\;\text{if $k=l-1$ and $l \geq 1$.} \end{cases}$$
We next claim that
$$\xi := \sum_{k=0}^\infty \Bigl( \al_k (\Delta^{-k} S_k \ot \Delta^{-k} S_k)  + \be_k (\Delta^{-k-1}S_{k+1} \ot \Delta^{-k} S_k) + \gamma_k(\Delta^{-k} S_k \ot \Delta^{-k-1}S_{k+1})\Bigr)$$
is a well defined element in $L^2(E) \ot L^2(E)$. This follows because $E_A(S_k^2) = \langle t_k,t_k\rangle_A = \Delta^k$ and thus
$$\|\Delta^{-k} S_k\|_2^2 = \tau(\Delta^{-2k} S_k^2) = \tau(\Delta^{-k}) \leq \delta^{-k} \; ,$$
where $\delta > 1$. By construction,
$$\langle S_k W(\mu) S_l , W(\mu) \rangle = \tau(e_1)^{-2} \; (\tau \ot \tau)((S_k \ot S_l) \xi) \; .$$
So, the $D$-bimodule $\overline{D\mu D}$ is contained in the coarse $D$-bimodule $L^2(E) \ot L^2(E)$.

We have thus proved that all $D$-bimodules in \eqref{eq.in-the-span} are contained in a multiple of the coarse $D$-bimodule. Since $D$ is diffuse, it follows that $e_1 M e_1 \cap D' \subset E$. In particular, $\cZ(M) e_1 \subset E$.

We are now ready to prove \ref{my-three}. Fix $x \in \cZ(M)$. We have to prove that $x s \in A$. Because of \ref{my-one} and the previous paragraphs, we can uniquely decompose $x (z_0+e_1)$ as the $\|\,\cdot \,\|_2$-convergent sum
\begin{equation}\label{eq.dec-x}
x (z_0 + e_1) = a_0 + \sum_{k=1}^\infty S_k a_k
\end{equation}
with $a_0 \in A(z_0 + e_1)$ and $a_k \in A e_1$ for all $k \geq 1$. Note that $a_0 = E_A(x)(z_0+e_1)$ and $a_k = \Delta^{-k} E_A(S_k x)$ for all $k \geq 1$.

Let now $\eta \in L \, H(z_0 + e_1) \cap (t A)^\perp$ be an arbitrary left and right bounded vector. Note that
\begin{equation}\label{eq.my-eta}
\eta = \sum_{i=1}^n \xi_i \ot_A J(\eta_i)
\end{equation}
where the vectors $\eta_i \in (z_0 + e_1)H$ are both left and right bounded. Define
$$W(\eta) := \sum_{i=1}^n W(\xi_i,J(\eta_i))$$
and note that $W(\eta) \in s M (z_0 + e_1) \subset e_1 M (z_0 + e_1)$.

%

Using that $W(\eta)$ commutes with $x$ and using the decomposition of $x(z_0 + e_1)$ in \eqref{eq.dec-x}, we find that
\begin{align*}
W(\eta) x \Omega &= W(\eta) (z_0 + e_1) x \Omega = W(\eta) a_0 \Omega + \sum_{k=1}^\infty W(\eta) S_k a_k \Omega \\
&= \eta (a_0 + a_1) + \sum_{k=1}^\infty \eta \ot_A t_k(a_k + a_{k+1}) \;\; ,\\
x W(\eta) \Omega &= x e_1 W(\eta) \Omega = a_0 e_1 W(\eta) \Omega + \sum_{k=1}^\infty a_k S_k W(\eta) \Omega \\
&= (a_0 + a_1) \eta + \sum_{k=1}^\infty (a_k+a_{k+1}) t_k \ot_A \eta \;\; .
\end{align*}
In this last expression for $x W(\eta) \Omega$, all terms except $(a_0 + a_1)\eta$ are orthogonal to $W(\eta) x \Omega$. We conclude that $(a_k+a_{k+1}) t_k \ot_A \eta = 0$ for all $k \geq 1$ and for all choices of $\eta$. Since the left support of $L \, H(z_0 + e_1) \cap (t A)^\perp$ equals $s$, it follows that $(a_k + a_{k+1})s = 0$ for all $k \geq 1$. This means that $a_k s = (-1)^{k-1} a_1 s$ for all $k \geq 1$.

Since,
$$+\infty > \|x\|_2^2 \geq \sum_{k=1}^\infty \|S_k a_k s\|_2^2 = \sum_{k=1}^\infty \tau(s a_1^* \Delta^k a_1 s) \geq \sum_{k=1}^\infty \delta^k \|a_1 s\|_2^2 \; ,$$
it follows that $a_1 s = 0$. So, $a_k s = 0$ for all $k \geq 1$. From \eqref{eq.dec-x}, it follows that $x s \in A$, so that \ref{my-three} is proved.

Proof of \ref{my-four}. Fix an $M$-central state $\om$ on $\langle M,e_A \rangle$ that is normal on $M$. We have to prove that $\om(s) = 0$. Recall that we defined $T_0 \subset H^2$ as the closure of $t A$. Consider the following orthogonal decomposition of $e_1 L^2(M)$ as an $A$-bimodule:
\begin{align*}
& e_1 L^2(M) = V_0 \oplus V_1 \oplus V_2 \quad\text{where} \quad V_0 := \bigoplus_{n=0}^\infty T_0 H^n \;\; ,\\
& V_1 := L^2(A e_1) \oplus \bigoplus_{n=0}^\infty (e_1 H \ominus L)H^n \;\; , \quad V_2 := L \oplus \bigoplus_{n=0}^\infty (LH \ominus T_0)H^n \;\; .
\end{align*}
Denote by $Q_i \in e_1 \langle M,e_A \rangle e_1$ the projections onto $V_i$, for $i=0,1,2$. So, $e_1 = Q_0+Q_1+Q_2$. Also note that the projections $Q_i$ commute with $A$. We prove below that $\om(s Q_0) = \om(Q_1)=\om(Q_2) = 0$. Once these statements are proved, \ref{my-four} follows.

%

To prove that $\om(Q_1) = 0$, note that for all $\mu \in V_1$ and all $k \geq 1$, we have that $S_k \mu = t_k \ot_A \mu$ and thus, $S_k \mu$ is orthogonal to $V_1$. So, for all $\mu,\mu' \in V_1$ and $d \in D$, we get that
$$\langle d \mu, \mu' \rangle = \tau(e_1)^{-1} \tau(d) \, \langle \mu , \mu' \rangle \; .$$
Above we introduced the unitary element $u \in \cU(D)$ satisfying $\tau(u^k)=0$ for all $k \in \Z \setminus \{0\}$. It follows that the subspaces $u^k V_1$ are all orthogonal. So, the projections $u^k Q_1 u^{-k}$ are all orthogonal. By $M$-centrality, $\om$ takes the same value on each of these projections. So, $\om(Q_1) = 0$.

To prove that $\om(Q_2) = 0$, we argue similarly. For all $\mu \in V_2$ and all $k \geq 2$, we have that $S_k \mu = t_k \ot_A \mu + t_{k-1} \ot_A \mu$ and thus, $S_k \mu$ is orthogonal to $V_2$. On the other hand, $S_1 \mu = t \ot_A \mu + \mu$ and here, only $t \ot_A \mu$ is orthogonal to $V_2$. It follows that for all $\mu,\mu' \in V_2$ and $d \in D$,
$$\langle d \mu,\mu' \rangle = \gamma(d) \, \langle \mu,\mu' \rangle \; ,$$
where $\gamma : D \recht \C$ is the normal state given by $\gamma(e_1) = \gamma(S_1) = 1$ and $\gamma(S_k) = 0$ for all $k \geq 2$. Note that $\gamma$ can be defined as well as the vector state on $D$ implemented by any choice of unit vector in $V_2$. Since $D$ is diffuse, we can choose a unitary $v \in \cU(D)$ such that $\gamma(v^k) = 0$ for all $k \in \Z \setminus \{0\}$. It follows that the subspaces $v^k V_2$ are all orthogonal. As in the previous paragraph, we get that $\om(Q_2) = 0$.
%
%
%

It remains to prove that $\om(s Q_0) = 0$. Fix $\eta \in L \, H(z_0 + e_1) \ominus T_0$ as in \eqref{eq.my-eta} and define
$$\eta' = \sum_{i=1}^n \eta_i \ot_A J(\xi_i) \; .$$
Note that $\eta' \in (z_0 + e_1) H \, J(L) \ominus T_0$. From \ref{my-two}, we already know that $\om(z_0)= 0$. Since $e_1 \eta' \in V_1 + V_2$, we also know that $\om(\ell(e_1 \eta')\ell(e_1 \eta')^*) = 0$. Both together imply that $\om(\ell(\eta') \ell(\eta')^*) = 0$.

For all $n \geq 0$ and $\mu \in H^n$, we have that
$$W(\eta) (\eta' \ot_A t \ot_A \mu) = \eta \ot_A \eta' \ot_A t \ot_A \mu + \sum_{i=1}^n \ell(\xi_i) \ell(\eta_i)^* (\eta' \ot_A t \ot_A \mu) + \langle \eta',\eta' \rangle_A \, (t \ot_A \mu) \; .$$
Since
$$\ell(t)^* \sum_{i=1}^n \ell(\xi_i) \ell(\eta_i)^* \eta' = \sum_{i=1}^n \ell(J(\xi_i))^* \ell(\eta_i)^* \eta' = \ell(\eta')^* \eta' = \langle \eta',\eta'\rangle_A$$
and since the projection $Q_0$ is given by $Q_0 = \Delta^{-1} \ell(t)  \ell(t)^*$, we get that
$$Q_0 W(\eta) (\eta' \ot_A t \ot_A \mu) = \langle \eta',\eta' \rangle_A \, \Delta^{-1} \, (t \ot_A t \ot_A \mu) + \langle \eta',\eta' \rangle_A \, (t \ot_A \mu)$$
for all $n \geq 0$ and all $\mu \in H^n$. This means that
$$Q_0 W(\eta) \ell(\eta' \ot_A t) = \langle \eta',\eta' \rangle_A \, \bigl( \Delta^{-1} \, \ell(t \ot_A t) + \ell(t)\bigr) = \ell(t) \, \langle \eta',\eta' \rangle_A \, (1 + \Delta^{-1} \ell(t)) \; .$$
Because
$$\|\Delta^{-1} \ell(t)\|^2 = \|\Delta^{-2} \ell(t)^* \ell(t)\| = \|\Delta^{-1}\| \leq \delta^{-1} < 1 \; ,$$
the operator $R := 1 + \Delta^{-1} \ell(t)$ is invertible. Also note that there exists a $\kappa > 0$ such that
$$\ell(\eta' \ot_A t) \ell(\eta' \ot_A t)^*  \leq \kappa \, \ell(\eta') \ell(\eta')^* \; .$$
So, we find $\eps > 0$ and $\kappa > 0$ such that
\begin{equation}\label{eq.imp-ineq}
\begin{split}
\eps \; \ell(t) \, (\langle \eta',\eta' \rangle_A )^2 \, \ell(t)^* & \leq \ell(t) \, \langle \eta',\eta' \rangle_A \, RR^* \, \langle \eta',\eta' \rangle_A \, \ell(t)^* \\
&= Q_0 W(\eta) \ell(\eta' \ot_A t) \ell(\eta' \ot_A t)^* W(\eta)^* Q_0 \\
& \leq \kappa Q_0 W(\eta) \ell(\eta') \ell(\eta')^* W(\eta)^* Q_0 \; .
\end{split}
\end{equation}
We already proved that $\om(\ell(\eta') \ell(\eta')^*) = 0$. Since $\om$ is $M$-central, also
$$\om(W(\eta) \ell(\eta') \ell(\eta')^* W(\eta)^*)=0 \; .$$
Because $e_1 = Q_0 + Q_1 + Q_2$ and $\om(Q_1) = \om(Q_2) = 0$, the Cauchy-Schwarz inequality implies that $\om(Y) = \om(Q_0 Y) = \om(Y Q_0)$ for all $Y \in e_1 \langle M,e_A \rangle e_1$. Therefore,
$$\om\bigl( Q_0 W(\eta) \ell(\eta') \ell(\eta')^* W(\eta)^* Q_0 \bigr) = \om(W(\eta) \ell(\eta') \ell(\eta')^* W(\eta)^*) = 0 \; .$$
It then follows from \eqref{eq.imp-ineq} that
$$\om\bigl( (\langle \eta',\eta' \rangle_A )^2 \, \Delta \, Q_0\bigr) = 0$$
for all bounded vectors $\eta' \in (z_0 + e_1) H \, J(L) \ominus T_0$. By the Cauchy-Schwarz inequality and the normality of $\om$ restricted to $M$, we get that $\om(a_i Q_0) \recht \om(a Q_0)$ whenever $a_i \in A$ is a bounded sequence such that $\|a_i-a\|_2 \recht 0$. Since the right support of the $A$-bimodule $(z_0 + e_1) H \, J(L) \ominus T_0$ equals $s$, it follows that $\om(s Q_0) = 0$. Since we already proved that $\om(Q_1) = \om(Q_2) = 0$, it follows that \ref{my-four} holds.

Since $s$ lies arbitrarily close to $e$, it follows from \ref{my-one}-\ref{my-two} and \ref{my-three}-\ref{my-four} that

\begin{my-enumlist}[resume]
\item\label{my-five} $\cZ(M)(z_0+e) \subset \cZ(A)(z_0+e)$,
\item\label{my-six} every $M$-central state $\omega$ on $\langle M,e_A \rangle$ that is normal on $M$ satisfies $\omega(z_0+e)=0$.
\end{my-enumlist}

Recall that $z=z_0+z_1$ and $z_2 = 1-(z_0+z_1)$. Note that $\Delta^\ell_{z_2 H} \leq z_2$. We claim that $z_2 H z_2 = \{0\}$. Denote by $e_0 \in \cZ(A)z_2$ the left support of $z_2 H z_2$. Note that by symmetry, $e_0$ also is the right support of $z_2 H z_2$. By Lemma \ref{lem.again-partial-aut}, we get that $\Delta^\ell_{e_0 H e_0} = e_0$ and that $e_0 H e_0$ is given by a partial automorphism of $A$. Since
$$\Delta^\ell_{e_0H} = \Delta^\ell_{e_0He_0} + \Delta^\ell_{e_0H(1-e_0)} = e_0 + \Delta^\ell_{e_0H(1-e_0)}$$
and since $\Delta^\ell_{e_0H} \leq e_0$, we get that $e_0H(1-e_0) = \{0\}$. We conclude that $e_0H = He_0 = e_0He_0$ and that this $A$-bimodule is given by a partial automorphism of $A$. Since $H$ is assumed to be completely nontrivial, we get that $e_0=0$ and the claim is proved.

Recall that $e \in \cZ(A)z_1$ was defined as $e = z_1 - e'$ where $e' \in \cZ(A)z_1$ has the following properties: denoting by $f \in \cZ(A)$ the right support of $e'H$, we have that $e'H = z H f$ and that the $A$-bimodule $e'H$ is given by a partial automorphism of $A$. We claim that $f \leq z$. To prove this claim, denote $f_1 := f z_2$. If $f_1 \neq 0$, we find a nonzero projection $e\dpr \in \cZ(A)e'$ such that $e\dpr H = z H f_1$ and such that this $A$-bimodule is given by a partial automorphism of $A$. Above, we have proved that $z_2 H z_2 = \{0\}$. A fortiori, $z_2 H f_1 = \{0\}$, meaning that $H f_1 = z H f_1$. But then, $e\dpr H = H f_1$, contradicting the complete nontriviality of $H$. So, we have proved that $f \leq z$.

We next claim that $f \leq z_0+e$. To prove this claim, assume that $f' := f e'$ is nonzero. Then, $f'H = f e'H = f z H f \subset Hz$ because $f \leq z$. Applying the symmetry $J$, it follows that $H f' = z H f'$ and thus $e\dpr H = H f'$ for some nonzero projection $e\dpr \in \cZ(A)e'$, again contradicting the complete nontriviality of $H$. So, we have proved that $f \leq z_0+e$.

Since $e'H$ is given by a partial automorphism of $A$, we can take projections $e\dpr \in \cZ(A)e'$ arbitrarily close to $e'$ such that $e\dpr H$ is finitely generated as a right Hilbert $A$-module and $\Delta^\ell_{e\dpr H}$ is bounded. Denote by $f' \in \cZ(A)f$ the right support of $e\dpr H$ and denote by $\al : \cZ(A)e\dpr \recht \cZ(A)f'$ the corresponding surjective $*$-isomorphism satisfying $a \xi = \xi \al(a)$ for all $a \in \cZ(A)e\dpr$. Let $(\gamma_i)_{i=1}^n$ be a Pimsner-Popa basis of the right $A$-module $e\dpr H$ and define
$$R_i = \ell(\gamma_i) +\ell(J(\gamma_i))^* \quad\text{and}\quad R = \sum_{i=1}^n R_i R_i^* = \Delta^\ell_{e\dpr H} + \sum_{i=1}^n W(\gamma_i,J(\gamma_i)) \; .$$
Note that $R_i \in e\dpr M f'$ and $R \in e\dpr M e\dpr$. Since $\Delta^\ell_{e\dpr H} = e\dpr \Delta^\ell_H \geq e \dpr$, it follows from Lemma \ref{lem.diffuse-element-commutant} that the support projection of $R$ equals $e\dpr$.

Let $x \in \cZ(M)$ and using \ref{my-five}, take $a \in \cZ(A)(z_0+e)$ such that $(z_0+e)x = a$. Since $f'\leq z_0+e$, we have $f' x = a f'$ and thus
$$x R = \sum_{i=1}^n R_i x R_i^* = \sum_{i=1}^n R_i \, af' \, R_i^* = \al^{-1}(af') R \; .$$
Since the support projection of $R$ equals $e\dpr$, we have proved that $\cZ(M) e\dpr \subset \cZ(A) e\dpr$. Since $e\dpr$ lies arbitrarily close to $e'$, together with \ref{my-five}, it follows that

\begin{my-enumlist}[resume]
\item\label{my-seven} $\cZ(M) z \subset \cZ(A)z$.
\end{my-enumlist}

A similar reasoning using \ref{my-six} then implies that

\begin{my-enumlist}[resume]
\item\label{my-eight} every $M$-central state $\omega$ on $\langle M,e_A \rangle$ that is normal on $M$ satisfies $\omega(z)=0$.
\end{my-enumlist}

To prove the first two statements of the theorem, it remains to see what happens under the projection $z_2$.

Denote $\Delta_2 := \Delta^\ell_{z_2 H}$. By the definition of $z_2$, we have that $\Delta_2 \leq z_2$. Let $(\mu_i)_{i \in I}$ be a (possibly infinite) Pimsner-Popa basis for the right $A$-module $z_2 H$. Since $\Delta_2$ is bounded, we may choose the vectors $\mu_i$ to be left and right bounded. For the same reason,
$$s := \sum_{i \in I} \mu_i \ot_A J(\mu_i)$$
is a well defined bounded $A$-central vector in $z_2 H \, H z_2$ and the infinite sums
$$G_n = \sum_{i_1,\ldots,i_n} W(\mu_{i_1},J(\mu_{i_1}),\ldots,\mu_{i_n},J(\mu_{i_n}))$$
are well defined bounded operators in $z_2 M z_2 \cap (Az_2)'$ satisfying
$$G_n \Omega = s_n := \underbrace{s \ot_A \cdots \ot_A s}_{\text{$n$ times}} \; .$$
By convention, we put $G_0 = z_2$. From the definition of $G_n$, we obtain the recurrence relation
\begin{equation}\label{eq.recurrence-Gn}
G_1 G_n = G_{n+1} + G_n + \Delta_2 \, G_{n-1}
\end{equation}
for all $n \geq 1$, and thus, $G_{n+1} = (G_1-1)G_n - \Delta_2 \, G_{n-1}$ for all $n \geq 1$.

Denote by $q \in z_2 M z_2$ the projection onto the kernel of $G_1 + \Delta_2$. Although the sum defining $G_1$ is infinite, the computations in the proof of Lemma \ref{lem.diffuse-element-commutant} remain valid and it follows that the kernel of $(G_1 + \Delta_2) \, 1_{\{1\}}(\Delta_2)$ is reduced to zero. So, $q \leq 1_{(0,1)}(\Delta_2)$.

With the convention that $s_0 = z_2 \Omega$, we claim that
\begin{equation}\label{eq.formula-q}
q \Omega = \sum_{k=0}^\infty (-1)^k (z_2 - \Delta_2) s_k = \sum_{k=0}^\infty (-1)^k  s_k (z_2 - \Delta_2) \; .
\end{equation}
Because
\begin{align*}
\sum_{k=0}^\infty \| (z_2-\Delta_2) s_k\|_2^2 &= \sum_{k=0}^\infty \tau\bigl( \langle s_k,s_k\rangle_A \, (z_2-\Delta_2)^2 \bigr) \\
&= \sum_{k=0}^\infty \tau\bigl( \Delta_2^k (z_2 - \Delta_2)^2 \bigr) = \tau(z_2 -\Delta_2) < \infty \; ,
\end{align*}
the right hand side of \eqref{eq.formula-q} is a well defined element $p \in L^2(z_2 M z_2)$ satisfying, with $\|\,\cdot\,\|_2$-convergence,
$$p = \sum_{k=0}^\infty (-1)^k (z_2 - \Delta_2) \, G_k \; .$$
Note that $p=p^*$. Using the recurrence relation \eqref{eq.recurrence-Gn}, it follows that $(G_1 + \Delta_2) p = 0$ and thus $p = qp$. Taking the adjoint, also $p = pq$.

On the other hand, because $(G_1+\Delta_2)q = 0$, we have $G_1 q = - \Delta_2 q$. Using the recurrence relation \eqref{eq.recurrence-Gn}, it follows that $G_k q = (-1)^k \Delta_2^k q$ for all $k \geq 0$. It then follows that
$$p q = \sum_{k=0}^\infty (z_2- \Delta_2) \Delta_2^k \, q = 1_{(0,1)}(\Delta_2) \, q = q \; .$$
We already proved that $p q = p$, so that $p = q$ and \eqref{eq.formula-q} is proved.

From \eqref{eq.formula-q}, we get for all $\xi \in \cH$ that
$$(\ell(\xi) + \ell(J(\xi))^*) \, q \, \Omega = (\ell(\xi z_2) + \ell(J(\xi z_2))^*) \, q \, \Omega = 0 \; .$$
So, for all $x \in M$, we have that $x q = E_A(x) q$. Taking the adjoint, also $q x = q E_A(x)$ for all $x \in M$. Since $q$ commutes with $A$, it follows that $q \in \cZ(M)$ and $M q = A q$. From \eqref{eq.formula-q}, we also get that $E_A(q) = z_2 - \Delta_2$ and thus $E_A(q) = Z(\Delta^\ell_H)$ where $Z : (0,+\infty) \recht \R$ is defined as in the formulation of the theorem. So, $E_A(1-q) = z + \Delta_2$ and this operator has support equal to $1$. Statement \ref{stat-c} of the theorem is now proven.

We next prove that
\begin{my-enumlist}[resume]
\item\label{my-nine} $\cZ(M) (z_2-q) \subset \cZ(A)(z_2 - q)$.
\end{my-enumlist}

Take $x \in \cZ(M)$ and write
$$xz_2 \Om = \sum_{n =0}^\infty \zeta_n \quad\text{with}\;\; \zeta_n \in z_2 H^n \; .$$
Using \ref{my-seven}, take $a \in \cZ(A) z$ such that $x z = a$. Also write $a_0 = E_A(xz_2)$ and note that $\zeta_0 = a_0 \Om$.

Since $z_2 \cH z_2 = 0$, we have $z_2 \cH = z_2 \cH z$ and we get, for every $\xi \in \cH$, that
\begin{align*}
\sum_{n=0}^\infty \bigl( \ell(\xi)^* + \ell(J(\xi)) \bigr) \, \zeta_n & = \bigl( \ell(\xi)^* + \ell(J(\xi)) \bigr) \, x z_2 \Om \\
&= x \, \bigl( \ell(\xi)^* + \ell(J(\xi)) \bigr) \, z_2 \Om = x \, J(z_2 \xi) = xz \, J(z_2 \xi) = a \, J(z_2 \xi) \; .
\end{align*}
Comparing the components in $H^n$ for all $n \geq 0$, we find that
$$\ell(\xi)^* \zeta_1 = 0 \quad , \quad  \ell(\xi)^* \zeta_2 = a \, J(\xi) - J(\xi) \, a_0 \quad , \quad \ell(\xi)^* \zeta_{n+1} = - J(\xi) \ot_A \zeta_{n-1}$$
for all $\xi \in z_2 \cH$ and all $n \geq 2$. Since $\zeta_n \in z_2 H^n$ for all $n$, it first follows that $\zeta_1 = 0$ and then inductively, that $\zeta_n = 0$ for all odd $n$.

Next, we get that $\zeta_2 = s_a - s a_0$, where
$$s_a := \sum_{i \in I} \mu_i \ot_A a J(\mu_i)$$
is a well defined $A$-central vector in $z_2 H^2 z_2$.

Before continuing the proof, we give another expression for $s_a$. For all $\mu,\mu' \in z_2 \cH = z_2 \cH z$, we have that $W(J(\mu),\mu') \in z Mz$. Since $x z = a$ and $x \in \cZ(M)$, it follows that $a$ commutes with $W(J(\mu),\mu')$. This means that
$$a \, J(\mu) \ot_A \mu' = J(\mu) \ot_A \mu' \, a \quad\text{for all}\;\; \mu,\mu' \in z_2 \cH \; .$$
It follows that $a \, J(\mu) \ot_A s = J(\mu) \ot_A s_a$ for all $\mu \in z_2 \cH$. Defining the normal completely positive map $\vphi : Az \recht Az_2$ given by
$$\vphi(b) = \sum_{i \in I} \langle J(\mu_i) , b \, J(\mu_i)\rangle_A \quad\text{for all}\;\; b \in Az \; ,$$
we get that $\vphi(a) \, s = \Delta_2 \, s_a$. Since $\vphi(z) = \Delta_2$, there is a unique normal completely positive map $\psi : Az \recht Az_2$ such that $\psi(b) \Delta_2 = \vphi(b)$ for all $b \in Az$. We conclude that $s_a = \psi(a) \, s = s \, \psi(a)$.

Writing $a_1 = \psi(a) - a_0$, we get that $\zeta_2 = s \, a_1$. We then conclude that $\zeta_{2n} = (-1)^{n+1}\, s_n \, a_1$ for all $n \geq 1$. Define the spectral projection $r = 1_{\{1\}}(\Delta_2)$. Since
$$\langle \zeta_{2n},\zeta_{2n}\rangle_A = a_1^* \, \langle s_n,s_n\rangle_A \, a_1 = a_1^* \, \Delta_2^n \, a_1 \; ,$$
we get that $\|\zeta_{2n} r\| = \|a_1 r\|_2$ for all $n$. Since $\sum_n \|\zeta_{2n} r\|^2 < \infty$, we conclude that $a_1 r = 0$ and thus $x r \in A$.

Using \eqref{eq.formula-q}, it follows that $x (z_2 - \Delta_2) = q a_1 + a_2$ for some element $a_2 \in A$. Since $xr \in A$, it follows that $x(z_2-q) \in A(z_2-q)$. Since the support of $E_A(z_2-q)$ equals $z_2$, it follows that \ref{my-nine} holds.

Using \ref{my-seven} and \ref{my-nine}, to conclude the proof of statement \ref{stat-d}, it suffices to prove that for any $a \in \cZ(A)$, we have $a(1-q) \in \cZ(M)$ if and only if $a \in C$, where $C$ is defined in the formulation of the theorem. This follows immediately by expressing the commutation with $\ell(\xi)+\ell(J(\xi))^*$ for all $\xi \in \cH$ and using that $\bigl(\ell(\xi)+\ell(J(\xi))^*\bigr) \, q = 0$, as shown above.

Let $\om$ be an $M$-central state on $\langle M,e_A \rangle$ that is normal on $M$. To conclude the proof of statement \ref{stat-a}, we have to show that $\om(1-q) = 0$. By \ref{my-eight}, we already know that $\om(z) = 0$. With $\mu_i \in z_2 \cH = z_2 \cH z$ as above, define $y_i := \ell(\mu_i) + \ell(J(\mu_i))^*$. Note that $y_i \in z_2 M z$ and that $G_1 + \Delta_2 = \sum_i y_i y_i^*$. By $M$-centrality and normality of $\om$ on $M$, and because $y_i^* y_i \in z M z$, we get that $\om(G_1 + \Delta_2) = 0$. So, $\om(z_2 - q)=0$. Since we already know that $\om(z) = 0$, we conclude that $\om(1-q) = 0$.


It remains to prove statement \ref{stat-b}. Assume that $s \in \cZ(M)(1-q)$ is a nonzero projection and that $B \subset Ms$ is a Cartan subalgebra. Since $\cN_{Ms}(B)\dpr = Ms$, a combination of statement \ref{stat-a} and Theorem \ref{thm1} implies that $B \prec_{M} A(1-q)$. The $A$-subbimodule $z_2 H = z_2 H z$ of $L^2(M)$ has finite right $A$-dimension equal to $\tau(\Delta_2)$ and realizes a full intertwining of $A(z_2-q)$ into $Az$. It then follows that $B \prec_M A z$.

By \cite[Theorem 2.1]{Po03}, we can take projections $q_1 \in B$, $p \in A z$, a faithful normal unital $*$-homomorphism $\theta : Bq_1 \recht pAp$ and a nonzero partial isometry $v \in q_1 M p$ such that $b v = v \theta(b)$ for all $b \in Bq_1$. Since $B \subset Ms$ is maximal abelian, we may assume that $v v^* = q_1$. By \cite[Lemma 1.5]{Io11}, we may assume that $B_0 := \theta(Bq_1)$ is a maximal abelian subalgebra of $pAp$. Write $q_2 = v^* v$ and note that $q_2 \in B_0' \cap pMp$. We may assume that the support projection of $E_A(q_2)$ equals $p$.

Since $z = z_0 + z_1$, at least one of the projections $p z_0$, $p z_1$ is nonzero. Since we can cut down everything with the projections $z_0$ and $z_1$, we may assume that either $p \leq z_0$ or $p \leq z_1$.

{\bf Proof in the case where $p \leq z_0$.} Recall that we denoted by $K \subset H$ the largest $A$-subbimodule that is left weakly mixing and that $z_0$ is the left support of $K$. First assume that the $B_0$-$A$-bimodule $p K$ is left weakly mixing. Define the orthogonal decomposition of the $pAp$-bimodule $pL^2(M) p$ given by
$$p L^2(M) p = U_1 \oplus U_2 \quad\text{with}\quad U_1 = \bigoplus_{n=0}^\infty p K H^n p \quad\text{and}\quad U_2 = L^2(pAp) \oplus \bigoplus_{n=0}^\infty p(H \ominus K) H^n p \; .$$
We claim that $v^* \cN_{q_1Mq_1}(Bq_1) v \subset U_2$. To prove this claim, take $u \in \cN_{q_1Mq_1}(Bq_1)$ and write $u^* b u = \al(b)$ for all $b \in Bq_1$. Put $x = v^* u v$ and denote by $y$ the orthogonal projection of $x$ onto $U_1$. Since $U_1$ is a $pAp$-subbimodule of $pL^2(M)p$, we get that $y$ is a right $pAp$-bounded vector in $U_1$ and that $\theta(b)y = y \theta(\al(b))$ for all $b \in Bq$. Since the $B_0$-$A$-bimodule $p K$ is left weakly mixing, also $U_1$ is left weakly mixing as a $B_0$-$pAp$-bimodule. So, we can take a sequence of unitaries $b_n \in \cU(Bq_1)$ such that $\lim_n \| \langle \theta(b_n) y,y \rangle_{pAp} \|_2 = 0$. But,
$$\langle \theta(b_n) y,y \rangle_{pAp} = \langle y \theta(\al(b_n)),y \rangle_{pAp} = \theta(\al(b_n)^*) \, \langle y , y \rangle_{pAp} \; .$$
Since $\theta(\al(b_n))$ is a unitary in $B_0$, we have $\|\theta(\al(b_n)^*) \, \langle y , y \rangle_{pAp}\|_2 = \|\langle y,y \rangle_{pAp}\|_2$ for all $n$. We conclude that $y = 0$ and thus $v^* u v \in U_2$. Since the linear span of $\cN_{q_1Mq_1}(Bq_1)$ is $\|\cdot\|_2$-dense in $q_1Mq_1$, we get that $q_2 M q_2 \subset U_2$.

Again consider the von Neumann subalgebra $N \subset z_0 M z_0$ introduced in \eqref{eq.vnalg-N}. Since
$$P_{pL^2(N)p}(U_2) \subset L^2(pAp) \; ,$$
we get that $E_{pNp}(q_2 M q_2) \subset p A p$. Denote by $N_0 \subset pNp$ the von Neumann algebra generated by the subspace $E_{pNp}(q_2 M q_2)$. So, $N_0 \subset pAp$. In particular, $E_N(q_2) \in A$, so that $E_N(q_2) = E_A(q_2)$ and thus, $E_N(q_2)$ has support $p$. By \cite[Lemma 1.6]{Io11}, the inclusion $N_0 \subset pNp$ is essentially of finite index in the sense of Definition \ref{def.ess-finite-index}. A fortiori, $pAp \subset pNp$ is essentially of finite index. This contradicts the left weak mixing of the $N$-$A$-bimodule $L^2(N)$ that we obtained in \ref{NA-weak-mix}.

Next assume that the $B_0$-$A$-bimodule $p K$ is not left weakly mixing and take a nonzero $B_0$-$A$-subbimodule $K_1 \subset p K$ that is finitely generated as a right Hilbert $A$-module. Denote by $z_0' \in \cZ(B_0)$ the support projection of the left action of $B_0$ on $K_1$. Since $K_1 \neq \{0\}$, also $z_0' \neq 0$. Since the support of $E_A(q_2)$ equals $p$, we get that $E_A(q_2 z_0') = E_A(q_2) z_0' \neq 0$. So, $q_2 z_0' \neq 0$ and we can cut down everything by $z_0'$ and assume that the left $B_0$ action on $K_1$ is faithful.

Put $P = \cN_{pAp}(B_0)\dpr$. Whenever $u \in \cN_{q_1Mq_1}(Bq_1)$ with $u b u^* = \al(b)$ for all $b \in Bq_1$, we have $E_A(v^* u v) \theta(b) = \theta(\al(b)) E_A(v^* u v)$ for all $b \in Bq_1$. Since $B_0 \subset pAp$ is maximal abelian, it follows that $E_A(v^* u v) \in P$. So $E_A(q_2 M q_2) \subset P$. From \cite[Lemma 1.6]{Io11}, we conclude that the inclusion $P \subset pAp$ is essentially of finite index in the sense of Definition \ref{def.ess-finite-index}. So, all conditions of Lemma \ref{lem.complement} are satisfied and we can choose a diffuse abelian von Neumann subalgebra $D \subset B_0' \cap pMp$ that is in tensor product position w.r.t.\ $B_0$. Since $Bq_1 \subset q_1Mq_1$ is maximal abelian, also $B_0 q_2 \subset q_2 M q_2$ is maximal abelian. So, $q_2 (B_0' \cap pMp) q_2 = B_0 q_2$, contradicting Lemma \ref{lem.no-complement} below.

{\bf Proof in the case where $p \leq z_1$.} As proven above, we can find projections $e_1 \in \cZ(A)z_1$ that lie arbitrarily close to $z_1$ and for which there exists an $A$-subbimodule $L \subset z_1 H$ with the following properties: the left support of $L$ equals $e_1$, $L$ is finitely generated as a right Hilbert $A$-module, $\Delta^\ell_L$ is bounded and $\Delta^\ell_L \geq e_1$. Taking $e_1$ close enough to $z_1$ and cutting down with $e_1$, we may assume that $p \leq e_1$. By Lemma \ref{lem.diffuse-element-commutant}, we can choose a diffuse abelian von Neumann subalgebra $D \subset (Ae_1)' \cap e_1 M e_1$ that is in tensor product position w.r.t.\ $Ae_1$. Then $Dp \subset B_0' \cap pMp$ and $Dp$ is in tensor product position w.r.t.\ $B_0$. Since $Dp$ is diffuse abelian and $q_2 \in B_0' \cap pMp$ is a projection satisfying $q_2(B_0' \cap pMp)q_2 = B_0 q_2$, this again contradicts Lemma \ref{lem.no-complement}.
\end{proof}

In the proof of Theorem \ref{thm.absence-Cartan}, we needed several technical lemmas that we prove now.

Let $(A,\tau)$ be a tracial von Neumann algebra and denote by $\widehat{\cZ(A)}$ the \emph{extended positive part} of $\cZ(A)$, i.e.\ when we identify $\cZ(A) = L^\infty(X,\mu)$, then $\widehat{\cZ(A)}$ consists of all measurable functions $f : X \recht [0,+\infty]$ up to identification of functions that are equal almost everywhere.

Whenever $(B,\tau)$ and $(A,\tau)$ are tracial von Neumann algebras and $H$ is a $B$-$A$-bimodule, we denote by $\Delta^\ell_H \in \widehat{\cZ(B)}$ the unique element in the extended positive part of $\cZ(B)$ characterized by
\begin{equation}\label{eq.def-Delta}
\tau(\Delta^\ell_H e) = \dim_{-A}(e H) \quad\text{for all projections}\;\; e \in \cZ(B) \; .
\end{equation}
Writing $H \cong p(\ell^2(\N) \ot L^2(A))$ with the bimodule action given by $b \cdot \xi \cdot a = \al(b) \xi a$ where $\al : B \recht p (B(\ell^2(\N)) \ovt A) p$ is a normal $*$-homomorphism, we get that $\tau(\Delta^\ell_H \, \cdot \,) = (\Tr \ot \tau)\al(\, \cdot \,)$ and this also allows to construct $\Delta^\ell_H$.

Recall that a \emph{finitely generated} right Hilbert $A$-module $K$ admits a \emph{Pimsner-Popa basis}, i.e.\ right bounded elements $\xi_1,\ldots,\xi_n$ such that
\begin{equation}\label{eq.PP-basis}
\xi = \sum_{i=1}^n \xi_i \, \langle \xi_i,\xi\rangle_A
\end{equation}
for all right bounded elements $\xi \in K$. We denote by $t_K \in K \ot_A \overline{K}$ the associated vector given by
\begin{equation}\label{eq.tK}
t_K := \sum_{i=1}^n \xi_i \ot_A \overline{\xi_i} \; .
\end{equation}
When $K$ is an $A$-bimodule, then $t_K$ is an $A$-central vector and $\langle t_K , t_K \rangle_A = \Delta^\ell_K$.

Recall from the beginning of this section the notion of an $A$-bimodule given by a partial automorphism of $A$. Given an $A$-bimodule $L$, denote by $\zdim_{-A}(L)$, resp.\ $\zdim_{A-}(L)$, the center valued dimension of $L$ as a right, resp.\ left $A$-module. These are elements in the extended positive part of $\cZ(A)$. We have that $L$ is finitely generated as a right Hilbert $A$-module if and only if $\zdim_{-A}(L)$ is bounded.

\begin{lemma}\label{lem.char-bimod-partial-aut}
Let $(A,\tau)$ be a tracial von Neumann algebra and $T$ an $A$-bimodule with left support $e$. Denote $\Sigma := \zdim_{A-}(T \ot_A \overline{T})$. Then, the support of $\Sigma$ equals $e$ and $\Sigma \geq e$. Defining $e_1 = 1_{\{1\}}(\Sigma)$, the following holds.
\begin{enumlist}
\item Denoting by $f_1 \in \cZ(A)$ the right support of $e_1 T$, we have that $e_1 T = T f_1$ and that the $A$-bimodule $e_1 T$ is given by a partial automorphism of $A$.
\item When $e_2 \in \cZ(A)e$ and $f_2 \in \cZ(A)$ are projections such that $e_2 T = T f_2$ and such that the $A$-bimodule $e_2 T$ is given by a partial automorphism of $A$, then $e_2 \leq e_1$.
\item If $e_0 \in \cZ(A) e$ is a projection such that $e_0 T$ is finitely generated as a right Hilbert $A$-module, then the left support of $e_0 T \ot_A \overline{T} \cap (t_{e_0T} A)^\perp$ equals $e_0(1-e_1)$.
\end{enumlist}
\end{lemma}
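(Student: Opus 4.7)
My plan is to reduce to $e=1$ and then realize $T$ concretely. Assuming $e = 1$, I would choose an identification $T \cong p(\ell^2(\N) \ot L^2(A))$ of right Hilbert $A$-modules, with the left $A$-action encoded by a normal unital $*$-homomorphism $\alpha : A \recht pNp$, where $N := B(\ell^2(\N)) \ovt A$. By the two equivalent characterizations stated in the paper, $T$ is given by a partial automorphism of $A$ iff $\alpha(A) = pNp$. The key structural tool I would set up is the canonical $A$-$A$-bimodule isomorphism $T \ot_A \overline T \cong L^2(pNp,\Tr \ot \tau)$, with both $A$-actions on the right through $\alpha$. Under this picture, $\Sigma = \zdim_{A-}(L^2(pNp))$ computed via $\alpha$, and the characterization I would aim to extract is: $\Sigma = e$ on a central projection $z \in \cZ(A)$ if and only if $\alpha|_{Az}: Az \recht \alpha(z)N\alpha(z)$ is a $*$-isomorphism.

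To establish $\Sigma \geq e$ and identify its support, I would exhaust $T$ by an increasing net of finitely generated right Hilbert $A$-sub-bimodules $K_n \uparrow T$ with left supports $e_{K_n} \uparrow e$. For each $n$, the $A$-central vector $t_{K_n}$ of \eqref{eq.tK} lies in $K_n \ot_A \overline{K_n} \subset T \ot_A \overline T$ with $\langle t_{K_n}, t_{K_n}\rangle_A = \Delta^\ell_{K_n}$ having support $e_{K_n}$. The map $a \mapsto a t_{K_n}$ then isometrically embeds a left $A$-submodule of central-valued dimension $e_{K_n}$ into $T \ot_A \overline T$, yielding $\Sigma \geq e_{K_n}$ and, in the limit, $\Sigma \geq e$. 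The support of $\Sigma$ is contained in $e$ because $T$ itself is supported on $e$, so the support equals $e$.

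For (1) and (2), I would apply the above characterization. At $z = e_1 = 1_{\{1\}}(\Sigma)$, the equality $\Sigma|_{e_1} = e_1$ forces $\alpha(Ae_1) = p_1 N p_1$, where $p_1 := \alpha(e_1)$, so $e_1 T$ is given by a partial automorphism; the identity $e_1 T = T f_1$ with $f_1$ the right support then follows directly from the partial automorphism structure. Statement (2) is the converse: if $e_2 T = T f_2$ is given by a partial automorphism, the characterization applied to $e_2 T$ in place of $T$ yields $\Sigma e_2 = e_2$, hence $e_2 \leq e_1$. For (3), I would first note that centrality of $e_0$ gives $e_0 \cdot \overline\eta = \overline{e_0 \eta} \in \overline{e_0 T}$ for every $\eta \in T$, so $e_0 T \ot_A \overline T = e_0 T \ot_A \overline{e_0 T}$. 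The finite generation of $e_0 T$ makes $p_0 := \alpha(e_0)$ a finite projection in $N$, and the isomorphism of paragraph~1 reads $e_0 T \ot_A \overline{e_0 T} \cong L^2(p_0 N p_0, \Tr \ot \tau)$, under which the $A$-central vector $t_{e_0 T}$ corresponds to the identity $p_0 \in p_0 N p_0$. Consequently $t_{e_0 T} A$ corresponds to $L^2(\alpha(A e_0)) \subset L^2(p_0 N p_0)$, and its orthogonal complement has left support (via $\alpha$) equal to the maximal $z \in \cZ(A) e_0$ where $\alpha(Az) \subsetneq \alpha(z) N \alpha(z)$; by (1)--(2) this is exactly $e_0(1-e_1)$.

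The hardest step will be making the characterization ``$\Sigma = e$ on $z$ iff $\alpha|_{Az}$ is surjective onto $\alpha(z) N \alpha(z)$'' fully rigorous, particularly the converse implication, because $pNp$ carries only a semifinite trace when $T$ is not finitely generated. The rigorous argument would approximate $p$ by central cuts $q \in \cZ(pNp)$ of finite $\Tr \ot \tau$-trace and, inside each finite-trace corner, show that any element of the relative commutant $\alpha(A)' \cap qpNpq$ outside $\alpha(\cZ(A))q$ produces an extra $A$-central vector in $T \ot_A \overline T$ that strictly boosts $\zdim_{A-}$ above $e$ on a subprojection. Part (3) bypasses this subtlety entirely, since the finite-generation hypothesis already puts us into a finite-trace setting where the dimension comparison is standard.
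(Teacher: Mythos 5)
Your reduction to $T \cong p(\ell^2(\N) \ot L^2(A))$ with left action $\al : A \recht pNp$, $N = B(\ell^2(\N)) \ovt A$, and the identification $T \ot_A \overline{T} \cong L^2(pNp)$ is exactly the paper's starting point, but the characterization you build everything on is wrong, and the error is in the choice of corner. The correct statement (which is what the paper's proof extracts) is that $\Sigma z = z$ forces \emph{both} $\al(z) = p(1 \ot f)$, where $1 \ot f$ is the central support of $\al(z)$ in $N$, \emph{and} $\al(Az) = pNp(1\ot f)$; your version only asks that $\al|_{Az}$ be onto the smaller corner $\al(z)N\al(z)$, and this is not equivalent. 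Concretely, take $A = \C z_1 \oplus \C z_2$, $\al(z_1) = e_{11} \ot z_1$, $\al(z_2) = e_{22}\ot z_1 + e_{11}\ot z_2$, $p = \al(1)$. Then $\al|_{Az_1}$ is a $*$-isomorphism onto $\al(z_1)N\al(z_1) = \C\,\al(z_1)$, yet $\Sigma z_1 = 2 z_1$, because $z_1(T \ot_A \overline{T}) = z_1 T \ot_A \overline{T z_1}$ and $Tz_1$ is two-dimensional while $z_1T$ is one-dimensional: the right leg of the tensor product carries all of $T$, not just $zT$. This same conflation breaks your claim in (1) that ``$e_1 T = T f_1$ follows directly'' (surjectivity onto $\al(e_1)N\al(e_1)$ does not give $\al(e_1) = p(1\ot f_1)$), and it breaks (3): in the example, with $e_0 = z_1$, your identification $e_0 T \ot_A \overline{T} = e_0T \ot_A \overline{e_0T} \cong L^2(p_0Np_0)$ would give left support $0$ for the complement of $t_{e_0T}A$, whereas the correct answer, as the lemma asserts, is $e_0(1-e_1) = z_1$ (here $e_1 = 0$). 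The paper avoids this by working throughout with $\al(e_0)\,L^2(N)\,p$, i.e.\ cutting only on the left.

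Two further steps would also fail as planned. First, your proof of $\Sigma \geq e$ exhausts $T$ by $A$-sub-bimodules that are finitely generated as right Hilbert $A$-modules; by Proposition \ref{prop5}, a weakly mixing $T$ has no nonzero such sub-bimodule, so the central vectors $t_{K_n}$ you need may simply not exist. The paper instead produces left submodules $\overline{\al(Ae_0)(1 \ot q_n)}$ of $L^2(pNp)$ using central cuts $q_n$ of finite trace (after first observing that on $e_0 = 1_{(0,1]}(\Sigma)$ the commutant of the left action is finite), which requires no central vectors in $T \ot_A \overline{T}$. Second, your fallback for the ``hard step'' — showing that a nontrivial relative commutant $\al(A)' \cap qpNpq$ boosts the dimension — cannot yield the needed implication even where it applies: an irreducible finite-index inclusion $\al(Az) \subset$ corner has trivial relative commutant but $\Sigma$ equal to the index, which is $>1$. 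So triviality of the relative commutant does not give surjectivity; one has to run the dimension count directly, as the paper does when it argues that the orthogonal complement of $\al(Ap_n)(1\ot q_n)$ in $\al(p_n)L^2(N)p$ has dimension zero and hence $\al(e_1) = (1\ot f_1)p$ with $\al : Ae_1 \recht pNp(1\ot f_1)$ surjective.
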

\begin{proof}
Choose a set $I$, a projection $p \in B(\ell^2(I)) \ovt A$ and a normal unital $*$-homomorphism $\al : A \recht p(B(\ell^2(I)) \ovt A)p$ such that $T \cong p (\ell^2(I) \ot L^2(A))$ with the $A$-bimodule structure given by $a \cdot \xi \cdot b = \al(a) \xi b$. Note that $e$ equals the support of $\al$. Also note that $T \ot_A \overline{T} \cong L^2(p(B(\ell^2(I)) \ovt A)p)$ with the $A$-bimodule structure given by $a \cdot \xi \cdot b = \al(a) \xi \al(b)$.

Define $e_0 = 1_{(0,1]}(\Sigma)$ and denote by $f_0 \in \cZ(A)$ the right support of $e_0 T$. Note that $(1 \ot f_0)p$ is the central support of $\al(e_0)$ inside $p(B(\ell^2(I)) \ovt A)p$. By construction, $\zdim_{A-}(e_0 T \ot_A \overline{T}) \leq e_0$. It follows that the commutant of the left $A$ action on $e_0 T \ot_A \overline{T}$ is a finite von Neumann algebra. A fortiori, $p(B(\ell^2(I)) \ovt A)p(1 \ot f_0)$ is a finite von Neumann algebra. We can thus choose a sequence of projections $q_n \in \cZ(A)f_0$ such that $q_n \recht f_0$ and $p(1 \ot q_n)$ has finite trace for all $n$. Denote by $p_n \in \cZ(A)e_0$ the support of the homomorphism that maps $a \in A e_0$ to $\al(a)(1 \ot q_n)$. It follows that $p_n \recht e_0$.

Since the closure of $\al(Ae_0) (1 \ot q_n)$ inside $L^2(p(B(\ell^2(I)) \ovt A)p)$ has $\zdim_{A-}$ equal to $p_n$, we conclude that $\Sigma p_n \geq p_n$ for all $n$ and thus $\Sigma e_0 \geq e_0$. From the definition of $e_0$, it then follows that $\Sigma e_0 = e_0$ and $e_0 = e_1$ (as defined in the formulation of the lemma), as well as $\Sigma \geq e$ and $f_0 = f_1$. Since $p_n \Sigma = p_n$ for all $n$, it also follows that $\al(Ap_n)(1 \ot q_n)$ is dense in $\al(p_n) L^2(B(\ell^2(I)) \ovt A) p$ for all $n$, because the orthogonal complement has dimension zero. This means that $\al(e_1) = (1 \ot f_1)p$ and that $\al : A e_1 \recht p(B(\ell^2(I)) \ovt A)p(1 \ot f_1)$ is a surjective $*$-isomorphism. So, $e_1 T = T f_1$ and this $A$-bimodule is given by a partial automorphism of $A$.

So the first statement of the lemma is proved. Take $e_2 \in \cZ(A)e$ and $f_2 \in \cZ(A)$ as in the second statement of the lemma. It follows that $e_2 T \ot_A \overline{T} = e_2 T \ot_A \overline{e_2 T}$ and that $\zdim_{A-}(e_2 T \ot_A \overline{T}) = e_2$. So, $e_2 \Sigma = e_2$, meaning that $e_2 \leq e_1$.

Finally take $e_0 \in \cZ(A)$ as in the last statement of the lemma. We have $(\Tr \ot \tau)\al(e_0) = \dim_{-A}(e_0 T) < \infty$. Under the above isomorphism between $T \ot_A \overline{T}$ and $L^2(p(B(\ell^2(I)) \ovt A)p)$, the vector $t_{e_0T}$ corresponds to $\al(e_0)$. So we have to determine the left support $z$ of $\al(e_0) p L^2(B(\ell^2(I)) \ovt A) p \cap \al(Ae_0)^\perp$. A projection $e_3 \in \cZ(A)e_0$ is orthogonal to $z$ if and only if $\al(A e_3)$ is dense in $\al(e_3) p L^2(B(\ell^2(I)) \ovt A) p$. This holds if and only if there exists a projection $f_3 \in \cZ(A)$ such that $\al(e_3) = (1 \ot f_3) p$ and $\al(A e_3) = p(B(\ell^2(I)) \ovt A)p(1 \ot f_3)$. Since this is equivalent with $e_3 \leq e_1$, we have proved that $z = e_0(1-e_1)$.
\end{proof}

\begin{lemma}\label{lem.unique-dec}
Let $(A,\tau)$ be a tracial von Neumann algebra and $(H,J)$ a symmetric $A$-bimodule with left (and thus also, right) support $e \in \cZ(A)$. There is a unique projection $e_1 \in \cZ(A)$ such that $e_1 H = H e_1$, the $A$-bimodule $e_1 H$ is given by a partial automorphism of $A$ and the $A(e-e_1)$-bimodule $(1-e_1)H$ is completely nontrivial.
\end{lemma}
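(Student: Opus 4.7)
The plan is to apply Zorn's lemma to the poset
\[
\mathcal{S} := \{f \in \cZ(A) \text{ projection} \mid fH = Hf \text{ and } fH \text{ is given by a partial automorphism of } A\}
\]
ordered by $\leq$, and to define $e_1$ as a maximal element. First I verify that every chain $\{f_\alpha\}_{\alpha < \kappa}$ in $\mathcal{S}$ has an upper bound in $\mathcal{S}$: after well-ordering and replacing the chain by its disjoint family of increments $g_\alpha := f_\alpha - \sup_{\beta < \alpha} f_\beta$, each $g_\alpha H$ is a cut-down of $f_\alpha H$ by a central projection, hence still of partial-automorphism type, and the supremum $f := \sup f_\alpha$ satisfies $fH = \bigoplus_\alpha g_\alpha H = Hf$, which is an orthogonal direct sum of partial-automorphism bimodules and therefore itself one. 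Both stability properties used here (cut-down by a central projection; orthogonal direct sum) follow directly by bookkeeping from condition~(ii) of the definition.

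Next I verify complete nontriviality of the $A(e-e_1)$-bimodule $(1-e_1)H = (e-e_1)H$. The left action is faithful because the left support is exactly $e - e_1$. Suppose toward a contradiction that there are nonzero $p,q \in \cZ(A)(e - e_1)$ with $pH = Hq$ and $pH$ of partial-automorphism type. The symmetry $J$, combined with centrality and self-adjointness of $p,q$, gives $J(pH) = Hp$ and $J(Hq) = qH$, whence also $Hp = qH$, and in particular $qH$ is of partial-automorphism type as well (being identified with the conjugate bimodule of $pH$, and condition~(i) is left–right symmetric). Setting $r := p \wedge q$ and $e' := p \vee q$, the identities
\[
e' H \;=\; pH + qH \;=\; Hq + Hp \;=\; H e', \qquad e' H \;=\; pH \oplus (q-r)H
\]
(orthogonality follows from $pq = pr = r$) exhibit $e' H = H e'$ as a direct sum of two partial-automorphism bimodules, so $e' \in \mathcal{S}$. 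Since $e' \leq e - e_1$, the projection $e_1 + e' = e_1 \vee e' \in \mathcal{S}$ strictly exceeds $e_1$, contradicting maximality.

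For uniqueness, let $e_1'$ be any other projection satisfying the three conditions, and put $p := e_1'(1 - e_1) = e_1' - e_1 \wedge e_1' \in \cZ(A)(e - e_1)$. Since $p \leq e_1'$ is central, $pH = p(H e_1') = H p e_1' = Hp$, and $pH$ is a central cut-down of the partial-automorphism bimodule $e_1' H$, hence itself one. Complete nontriviality of $(1-e_1)H$ then forces $p = 0$, i.e.\ $e_1' \leq e_1$; the reverse inequality follows by symmetry. The main technical point throughout is keeping careful track of the stability of the partial-automorphism class under orthogonal direct sums and cut-downs by central projections—routine but requiring care to make the joining manipulation rigorous.
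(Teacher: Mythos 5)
Your overall strategy (Zorn's lemma on the family of central projections $f$ with $fH=Hf$ and $fH$ of partial-automorphism type, then deriving complete nontriviality of the complement from maximality) is genuinely different from the paper's proof, which obtains $e_1$ explicitly from Lemma \ref{lem.char-bimod-partial-aut} as the spectral projection $1_{\{1\}}\bigl(\zdim_{A-}(H\ot_A\overline{H})\bigr)$ and then uses $H\cong\overline{H}$ to force the left and right projections to coincide. However, as written your argument rests on a false closure property. You assert that an orthogonal direct sum of partial-automorphism bimodules is again of partial-automorphism type, "directly by bookkeeping from condition (ii)". This is not true: take $A=\C$ and $H=\C^2$ with trivial left and right actions; each summand $\C$ is given by a (partial) automorphism, but the commutant of the right action on $\C^2$ is $M_2(\C)\neq\C$, so $\C^2$ is not. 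The correct statement needs the summands to have pairwise orthogonal \emph{left supports and right supports}. In your chain-bound step this holds because the increments satisfy $g_\al H=Hg_\al$ (a fact you use implicitly and which itself needs a line), and in your maximality step the decomposition $e'H=pH\oplus(q-r)H$ does have orthogonal right supports, but this is exactly the point that requires an argument: one must first note $rH=pH\cap qH=Hq\cap Hp=Hr$ and deduce that the right support of $(q-r)H$ is orthogonal to $q$. You only check orthogonality of the subspaces, which is not what is needed, so the steps "$fH=\bigoplus_\al g_\al H=Hf$ is ... therefore itself one" and "$e'\in\mathcal S$" are not justified as they stand.

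There is a second, smaller error in the uniqueness step: the identity $pH=p(He_1')=Hpe_1'=Hp$ is false for a central projection $p\leq e_1'$. For instance, with $A=\C^2$, $\al$ the flip automorphism and $H=L^2(A)$ with $a\cdot\xi\cdot b=\al(a)\xi b$ (so $e_1'=1$), taking $p=(1,0)$ gives $pH\neq Hp$; a partial automorphism can move central projections. The conclusion you want still follows, because the definition of complete nontriviality allows different projections on the two sides: since $e_1'H$ is of partial-automorphism type, $pH=Hq'$ for some central $q'$, and $q'\leq e-e_1$ because $pH\subset(1-e_1)H=H(1-e_1)$, so complete nontriviality of $(1-e_1)H$ forces $p=0$. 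So both defects are reparable and your route can be completed without the center-valued dimension machinery of Lemma \ref{lem.char-bimod-partial-aut}, but the proof as submitted leans on an incorrect general principle precisely where the left/right support bookkeeping has to be done.
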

\begin{proof}
By Lemma \ref{lem.char-bimod-partial-aut}, we find projections $e_1,f_1 \in \cZ(A)e$ such that $e_1 H = H f_1$, the $A$-bimodule $e_1 H$ is given by a partial automorphism of $A$ and writing $e_2 := e-e_1$, $f_2 = e-f_1$, the $Ae_2$-$Af_2$-bimodule $e_2 H = H f_2$ is completely nontrivial. Since $H \cong \overline{H}$, we must have $e_1 = f_1$ and $e_2 = f_2$. The uniqueness of $e_1$ can be checked easily.
\end{proof}

%
%

By symmetry, given an $A$-bimodule $H$, we can also define $\Delta_H^r \in \widehat{\cZ(A)}$ characterized by the formula $\tau(\Delta_H^r e) = \dim_{A-}(He)$ for every projection $e \in \cZ(A)$.

\begin{lemma}\label{lem.again-partial-aut}
Let $(A,\tau)$ be a tracial von Neumann algebra and $T$ an $A$-bimodule with left support $e \in \cZ(A)$ and right support $f \in \cZ(A)$. If $\Delta^\ell_T \leq e$ and $\Delta^r_T \leq f$, then $\Delta^\ell_T = e$, $\Delta^r_T = f$ and $T$ is given by a partial automorphism of $A$.
\end{lemma}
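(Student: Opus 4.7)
The plan is to apply Lemma~\ref{lem.char-bimod-partial-aut} to $T$ and then use the boundedness $\Delta^r_T \leq 1$ to force the central-valued dimension $\Sigma := \zdim_{A-}(T \ot_A \overline{T})$ to equal $e$, so that the partial-automorphism part of $T$ coincides with all of $T$.

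First, since $\Delta^\ell_T$ and $\Delta^r_T$ are both bounded, $T$ is finitely generated both as a right and as a left Hilbert $A$-module. I would fix a Pimsner-Popa basis $(\xi_i)_{i \in I}$ of $T$ as a right Hilbert $A$-module, normalised via polar decomposition so that $p_i := \langle \xi_i,\xi_i\rangle_A$ is a projection in $A$ for each $i$ and $\langle \xi_i,\xi_j\rangle_A = 0$ for $i \neq j$. This produces $\sum_i \tau(p_i) = \dim_{-A}(T) = \tau(\Delta^\ell_T)$ together with the orthogonal decomposition of right Hilbert $A$-modules
$$T \ot_A \overline{T} = \bigoplus_i \xi_i \otimes (p_i \overline{T}) \; .$$
By Lemma~\ref{lem.char-bimod-partial-aut}(1), $\Sigma \geq e$ with support $e$, and writing $e_1 := 1_{\{1\}}(\Sigma) \leq e$, there exists $f_1 \in \cZ(A) f$ such that $e_1 T = T f_1$ is given by a partial automorphism of $A$.

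The key dimension identity I would establish is that, for every projection $p \in A$, $\dim_{-A}(p\,\overline{T}) = \tau(p\,\Delta^r_T)$. This follows by recognising $\Delta^r_T$ as the central-valued Radon-Nikodym derivative between $\tau$ on $A$ and the canonical trace on the commutant of the right $A$-action on $\overline{T}$ (a finite von Neumann algebra since $\dim_{-A}(\overline{T}) = \tau(\Delta^r_T) \leq \tau(f) < \infty$), pulled back via the left $A$-representation. Combined with the hypothesis $\Delta^r_T \leq 1$, this yields
$$\tau(\Sigma) = \sum_i \dim_{-A}(p_i \overline{T}) = \sum_i \tau(p_i\,\Delta^r_T) \leq \sum_i \tau(p_i) = \tau(\Delta^\ell_T) \leq \tau(e) \; .$$
Since $\Sigma \geq e$ forces $\tau(\Sigma) \geq \tau(e)$, every inequality above is an equality. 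Hence $\Sigma = e$, so $e_1 = e$, so $T = e_1 T = T f_1$ is given by a partial automorphism, with $f_1 = f$ because $f$ is the right support of $T$; moreover $\Delta^\ell_T = e$. The equality $\tau(p_i\,\Delta^r_T) = \tau(p_i)$ for every $i$ gives $p_i(1-\Delta^r_T) = 0$, so each $p_i$ sits below the spectral projection $1_{\{1\}}(\Delta^r_T)$; since $f$ is the central support of $\{p_i\}_i$, we conclude $f \leq 1_{\{1\}}(\Delta^r_T)$, which combined with $\Delta^r_T \leq f$ gives $\Delta^r_T = f$.

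The main obstacle will be justifying the identity $\dim_{-A}(p\,\overline{T}) = \tau(p\,\Delta^r_T)$ for possibly non-central projections $p \in A$; everything else is a straightforward bookkeeping of tracial identities once this Radon-Nikodym computation on the commutant is in hand.
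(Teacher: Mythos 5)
Your proof is correct, and it takes a genuinely different route from the paper's. The paper argues by exhaustion/contradiction: it considers the maximal central projection on which the conclusion holds and, on the complement, uses \cite[Proposition 2.3]{PSV15} (exploiting $\dim_{-A}(T)\leq 1$ and $\dim_{A-}(T)\leq 1$) to extract a nonzero bifinite subbimodule $K$ whose left and right supports are matched by a $*$-isomorphism $\al$ of central corners; a Radon--Nikodym computation then yields $\Delta^\ell_K\,\al(\Delta^r_K)=\zdim_{A-}(K\ot_A\overline{K})\geq e_2$, which together with $\Delta^\ell_K\leq e_2$, $\Delta^r_K\leq f_2$ forces $K$ to be given by a partial automorphism (Lemma~\ref{lem.char-bimod-partial-aut}), and additivity of $\Delta^\ell$ forces $e_2T=K$, contradicting maximality. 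You instead run a single global squeeze on $\Sigma=\zdim_{A-}(T\ot_A\overline{T})$: Lemma~\ref{lem.char-bimod-partial-aut} gives $\Sigma\geq e$ with support $e$, while your Pimsner--Popa decomposition plus the identity $\dim_{A-}(Tp)=\dim_{-A}(p\,\overline{T})=\tau(p\,\Delta^r_T)$ gives $\tau(\Sigma)\leq\tau(\Delta^\ell_T)\leq\tau(e)$, so $\Sigma=e$, and the conclusion (including $\Delta^\ell_T=e$, $\Delta^r_T=f$ from the equality cases) drops out of part~1 of Lemma~\ref{lem.char-bimod-partial-aut}. This avoids \cite[Proposition 2.3]{PSV15} and the maximality argument entirely; the price is exactly the density identity for non-central $p$ that you flag, and it does hold: $a\mapsto\dim_{-A}(a\,\overline{T})$ is the composition of the left representation of $A$ on $\overline{T}$ with the canonical normal trace on the commutant of the right action, hence a normal trace on $A$, hence of the form $\tau(c\,\cdot)$ with $c\in\widehat{\cZ(A)}$, and evaluating on central projections identifies $c=\Delta^r_T$.

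Two small points to patch. First, your chain silently equates $\tau(\Sigma)$, which is the \emph{left} dimension $\dim_{A-}(T\ot_A\overline{T})$, with the \emph{right} dimension $\sum_i\dim_{-A}(p_i\overline{T})$ computed by your decomposition; this is fine because $T\ot_A\overline{T}$ is isomorphic to its own conjugate bimodule (via $\overline{\xi\ot_A\overline{\eta}}\mapsto\eta\ot_A\overline{\xi}$), or, more directly, expand the second leg instead to get the left-module decomposition $T\ot_A\overline{T}=\bigoplus_i (Tp_i)\ot_A\overline{\xi_i}$ and sum $\dim_{A-}(Tp_i)=\tau(p_i\Delta^r_T)$ -- either way, add a sentence. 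Second, your opening claim that boundedness of $\Delta^\ell_T$ and $\Delta^r_T$ makes $T$ finitely generated as a right (and left) Hilbert $A$-module is not justified as stated (finite generation is characterized by boundedness of $\zdim_{-A}(T)$, not of $\Delta^\ell_T$), but it is also not needed: an orthogonal Pimsner--Popa family $(\xi_i)_{i\in I}$ with $\langle\xi_i,\xi_j\rangle_A=\delta_{ij}p_i$ exists for an arbitrary right Hilbert $A$-module with $I$ possibly infinite, and your computation, including $\sum_i\tau(p_i)=\tau(\Delta^\ell_T)<\infty$, only uses that.
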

\begin{proof}
Let $e_0 \in \cZ(A)e$ be the maximal projection with the following properties: the right support $f_0 \in \cZ(A)f$ of $e_0 T$ satisfies $e_0 T = T f_0$, the $A$-bimodule $e_0T$ is given by a partial automorphism of $A$ and $\Delta^\ell_T = e_0$, $\Delta^r_T = f_0$. We have to prove that $e_0 = e$.

Assume that $e_0$ is strictly smaller than $e$. Since $e_0 T = T f_0$, also $f_0$ is strictly smaller than $f$. Denote $e_1 = e - e_0$ and $f_1 = f-f_0$. Note that $e_1 T = T f_1$. Since $\dim_{-A}(T) = \tau(\Delta^\ell_T) \leq \tau(e) \leq 1$ and similarly $\dim_{A-}(T) \leq 1$, it follows from \cite[Proposition 2.3]{PSV15} that there exists a nonzero $A$-subbimodule $K \subset e_1 T$ with the following properties: $K$ is finitely generated, both as a left Hilbert $A$-module and as a right Hilbert $A$-module, and denoting by $e_2 \in \cZ(A)e_1$ and $f_2 \in \cZ(A)f_1$ the left, resp.\ right, support of $K$, there is a surjective $*$-isomorphism $\al : \cZ(A) f_2 \recht \cZ(A) e_2$ such that $\xi a = \al(a) \xi$ for all $\xi \in K$, $a \in \cZ(A) f_2$.

Denote by $D$ the Radon-Nikodym derivative between $\tau \circ \al$ and $\tau$, so that $\tau(b) = \tau(\al(b)D)$ for all $b \in \cZ(A)f_2$. By a direct computation, we get that
$$\Delta^\ell_K = D \, \al(\zdim_{-A}(K)) \quad\text{and}\quad \al(\Delta^r_K) = D^{-1} \, \zdim_{A-}(K) \; .$$
In particular, we get that
\begin{equation}\label{eq.equal}
\Delta^\ell_K \,  \al(\Delta^r_K) = \zdim_{A-}(K) \, \al(\zdim_{-A}(K)) \; .
\end{equation}
By Lemma \ref{lem.char-bimod-partial-aut} and the computation in the proof of \cite[Lemma 2.2]{PSV15}, we have
\begin{equation}\label{ineq}
\zdim_{A-}(K) \, \al(\zdim_{-A}(K)) = \zdim_{A-}(K \ot_A \overline{K}) \geq e_2 \; .
\end{equation}
Since $\Delta^\ell_K \leq e_2$ and $\Delta^r_K \leq f_2$, in combination with \eqref{eq.equal}, it follows that $\Delta^\ell_K = e_2$ and $\Delta^r_K = f_2$. From \eqref{ineq}, we then also get that $\zdim_{A-}(K \ot_A \overline{K}) = e_2$. By Lemma \ref{lem.char-bimod-partial-aut}, $K$ is given by a partial automorphism of $A$.

Since $e_2 \geq \Delta^\ell_{e_2 T} = \Delta^\ell_K + \Delta^\ell_{e_2 T \ominus K} = e_2 + \Delta^\ell_{e_2 T \ominus K}$, we conclude that $e_2 T \ominus K = \{0\}$. So, $e_2 T = K$ and $e_2 T$ is given by a partial automorphism of $A$. This then contradicts the maximality of $e_0$.
\end{proof}

\begin{lemma}\label{lem.diffuse-element-commutant}
Let $(A,\tau)$ be a tracial von Neumann algebra and $(H,J)$ a symmetric $A$-bimodule. Write $M = \Gamma(H,J,A,\tau)\dpr$. Let $p \in A$ be a projection and $B \subset p A p$ a von Neumann subalgebra such that $B' \cap pAp = \cZ(B)$. Let $K \subset p H$ be a $B$-$A$-subbimodule that is finitely generated as a right Hilbert $A$-module. Assume that $\Delta^\ell_K$ is bounded and satisfies $\Delta^\ell_K \geq p$, as $B$-$A$-bimodule.

Let $(\xi_k)_{k=1}^n$ be a Pimsner-Popa basis for $K$ as a right $A$-module. Then the vectors $\xi_k$ are also left $A$-bounded and using the notation of \eqref{eq.wick}, we define $S \in pMp$ given by
\begin{equation}\label{eq.def-S}
S := \sum_{k=1}^n W(\xi_k,J(\xi_k)) \; .
\end{equation}
Then, $S \in B' \cap pMp$, $S$ is self-adjoint and $S$ is diffuse relative to $B$. More precisely, in the von Neumann algebra $D := \{S\}\dpr$, there exists a unitary $u \in \cU(D)$ satisfying $E_B(u^k)=0$ for all $k \in \Z \setminus \{0\}$.
\end{lemma}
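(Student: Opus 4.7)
The plan is to verify successively that $S$ is a self-adjoint element of $pMp \cap B'$, to derive a three-term recurrence describing the action of $S$ on the iterated tensor powers of $s := S\Omega$, and to extract from the resulting Jacobi-matrix structure a unitary $u \in \cU(D)$ with $E_B(u^k) = 0$ for all $k \neq 0$. Boundedness of $\Delta^\ell_K$ ensures that each Pimsner--Popa vector $\xi_k$ is simultaneously left and right $A$-bounded, so $W(\xi_k, J\xi_k)$ is a well-defined bounded element of $M$; it lies in $pMp$ because $\xi_k \in K \subset pH$ gives $p\xi_k = \xi_k$, while the identity $J(a\xi) = (J\xi) a^*$ applied at $a = p$ yields $(J\xi_k)\,p = J(p\xi_k) = J\xi_k$. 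The Wick-product adjoint formula $W(\eta_1,\ldots,\eta_n)^* = W(J\eta_n,\ldots,J\eta_1)$ combined with $J^2 = 1$ then gives $S = S^*$. To check $S \in B'$, it is enough (since $\Omega$ is separating) to show that $s = \sum_k \xi_k \otimes_A J\xi_k$ is $B$-central in $H\otimes_A H$: for $b \in B$, combining the Pimsner--Popa identity $\sum_k \xi_k\,\langle \xi_k,\eta\rangle_A = P_K(\eta)$ applied at $\eta = b^*\xi_j \in K$ with the $J$-symmetry $J(b^*\xi) = (J\xi) b$ yields $bs = sb$ by direct computation.

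Next, set $\Delta := \langle s,s\rangle_A$. The relation $b\,\ell(s) = \ell(s)\,b$ (a consequence of the $B$-centrality of $s$) forces $\Delta \in B'\cap pAp = \cZ(B)$. Using $B$-centrality once more, one shows $se = \sum_k e\xi_k \otimes_A J(e\xi_k)$ for any projection $e \in \cZ(B)$, so that under the $A$-bimodule identification $\overline{\eta} \leftrightarrow J\eta$ the cutdown $se$ corresponds to the canonical central vector $t_{eK}\in eK \otimes_A \overline{eK}$; consequently $\tau(\Delta e) = \|se\|_2^2 = \dim_{-A}(eK) = \tau(\Delta^\ell_K e)$, so $\Delta = \Delta^\ell_K \geq p$ by hypothesis. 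Decomposing $S = \ell(s) + \ell(s)^* + D_0$ with $D_0 := \sum_k \ell(\xi_k)\ell(\xi_k)^*$ (the three summands of each $W(\xi_k, J\xi_k)$), the Pimsner--Popa formula shows that $D_0$ acts as the projection onto $K$ in the leftmost tensor factor, so $D_0\Omega = 0$ and $D_0\, s_k = s_k$ for $k \geq 1$. Together with $\ell(s)\,s_k = s_{k+1}$ and $\ell(s)^* s_k = \Delta \cdot s_{k-1}$, this yields the three-term recurrence
\[ S\,\Omega = s, \qquad S\,s_k = s_{k+1} + s_k + \Delta\,s_{k-1} \quad (k \geq 1),\]
where $s_k := s^{\otimes_A^k}$.

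Finally, because $\Delta \in \cZ(B)$ is bounded above and bounded below by $p$, the renormalizations $\tilde s_k := s_k\,\Delta^{-k/2}$ form an $A$-orthonormal family with $\langle \tilde s_k, \tilde s_l\rangle_A = \delta_{kl}\,p$, on which $S$ acts as the tridiagonal Jacobi operator with diagonal $(0, 1, 1, \ldots)$ and off-diagonal entries $\Delta^{1/2}$. Disintegrating $\cZ(B) = L^\infty(Z,\nu)$ reduces the problem to the scalar case $\Delta = t \geq 1$: the spectral measure of this Jacobi operator at its cyclic vector $\Omega$ is then the free Poisson distribution of parameter $t$ shifted by $-t$ (equivalently, when $t = n$ is an integer, the law of $X_1^2 + \cdots + X_n^2 - n$ for $n$ free semicircular elements), which is absolutely continuous on $[1 - 2\sqrt t,\, 1 + 2\sqrt t]$ and has no atoms for $t \geq 1$. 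A measurable selection of Borel isomorphisms onto $([0,1], \mathrm{Leb})$ fiberwise then produces a unitary $u \in \cU(D)$ that generates, on each fiber, a copy of $L^\infty(\T)$ with Haar measure, so $E_B(u^k) = E_{\cZ(B)}(u^k) = 0$ for all $k \neq 0$. The main obstacle is establishing the atomlessness of these fiber spectral measures, which uses precisely the hypothesis $\Delta \geq p$ to keep the off-diagonal entries of the Jacobi matrix bounded away from zero.
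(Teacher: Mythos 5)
Your proposal is correct in substance and follows the same overall strategy as the paper: show that $s = S\Omega = \sum_k \xi_k \ot_A J\xi_k$ is $B$-central, that $\Delta := \langle s,s\rangle_A$ lies in $B' \cap pAp = \cZ(B)$ and equals $\Delta^\ell_K$, and then reduce the computation of $E_B(g(S))$ to a one-parameter family of scalar spectral measures indexed by the value of $\Delta$, with $\Delta \geq p$ ruling out atoms. Where you genuinely differ is in how the fibre distribution is identified: you read off the three-term recurrence $S s_k = s_{k+1} + s_k + \Delta s_{k-1}$ and recognize the Jacobi matrix (diagonal $(0,1,1,\ldots)$, off-diagonals $\Delta^{1/2}$) as the orthogonal-polynomial data of a free Poisson law of rate $t$ shifted by $-t$, non-atomic exactly because $t \geq 1$ kills the atom of mass $1-t$. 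The paper instead writes $\Delta + S$ spatially as $W^*W$ with $W = \ell(\xi')\Delta^{1/2} + \ell(\xi)^*$, transports it onto the Fock-space model $Y(z) = \sqrt{\Delta(z)}\,\ell_2 + \ell_1^*$ over $\cZ(B) = L^\infty(Z)$, and quotes \cite{Sh96} for absence of atoms in the vacuum law of $X(\lambda)^*X(\lambda)$, $\lambda \geq 1$. The two computations are equivalent (the vacuum law of $X(\lambda)^*X(\lambda)$ is precisely the free Poisson law of rate $\lambda$); your route is more self-contained and explicit, while the paper's spatial isomorphism $\Psi$ packages at once the identification of the full joint distribution of $B \cup \{S\}$ — in your write-up this is only implicit, and you should add the (easy) remark that $E_B(g(S)) = E_{\cZ(B)}(g(S))$ is determined by the moments $\tau(c\,S^m)$, $c \in \cZ(B)$, which are read off the $s_0$-component of $S^m\Omega$ via the recurrence.

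Two soft spots. First, your closing heuristic that atomlessness comes from the off-diagonal entries being bounded away from zero is a red herring: the off-diagonals are $\sqrt{t} > 0$ for every $t>0$, yet for $t<1$ the measure does have an atom; what does the work is your explicit identification with the shifted Marchenko--Pastur law together with $t \geq 1$. Second, and more substantively, a ``measurable selection of Borel isomorphisms fibrewise'' a priori produces a Haar unitary in $W^*(S) \vee \cZ(B)$, hence in $B' \cap pMp$ with $E_B(u^k)=0$, rather than in $D = \{S\}\dpr$ as the lemma literally asserts: getting a single Borel function of $S$ that is conditionally Haar over $\cZ(B)$ requires one function working simultaneously for all fibres, which does not follow from fibrewise non-atomicity alone (for the family of uniform measures on $[0,t]$, $t$ in an interval, no such function exists). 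The paper's own proof is equally brisk here — its unitary lives in $\{\Delta+S\}\dpr$ and the fibrewise-to-global selection is asserted rather than proved — and every application of the lemma in the paper only uses a unitary in $B' \cap pMp$ with $E_B(u^k)=0$, so your argument delivers what is actually needed; but you should either state your conclusion in that form or supply the extra selection argument. Finally, a minor ordering issue: you invoke $\langle s,s\rangle_A$ before knowing the $\xi_k$ are left bounded; one should first deduce, as the paper does via the $B$-central functional $a \mapsto \sum_k \langle a\xi_k,\xi_k\rangle$ and $B' \cap pAp = \cZ(B)$, that $\sum_k {}_A\langle \xi_k,\xi_k\rangle$ is bounded by $\|\Delta^\ell_K\|$, which gives the left boundedness and makes your formulas legitimate.
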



\begin{proof}
Giving a Pimsner-Popa basis $(\xi_k)_{k=1}^n$ for the right Hilbert $A$-module $K$ is the same as defining a right $A$-linear unitary operator $\theta : e(\C^n \ot L^2(A)) \recht K$ for some projection $e \in A^n := M_n(\C) \ot A$, with $\xi_k = \theta(e(e_k \ot 1))$. Define the faithful normal $*$-homomorphism $\al : B \recht e A^n e$ such that $\theta(\al(b) \xi) = b \theta(\xi)$ for all $b \in B$ and $\xi \in e(\C^n \ot L^2(A))$. View $\overline{\C^n} \ot K$ as a $B$-$A^n$-subbimodule of $\overline{\C^n} \ot pH$. Define the vector $\xi \in \overline{\C^n} \ot K$ given by
$$\xi = \sum_{k=1}^n \overline{e_k} \ot \xi_k \; .$$
Then, $b \xi = \xi \al(b)$ for all $b \in B$ and, in particular, $\xi \in (\overline{\C^n} \ot K)e$.

Define the normal positive functional $\om : pAp \recht \C : \om(a) = \langle a \xi , \xi \rangle$. Since $\om$ is $B$-central and $B' \cap pAp = \cZ(B)$, we find $\Delta \in L^1(\cZ(B))^+$ such that $\om(a) = \tau(a \Delta)$ for all $a \in p A p$. But for all projections $q \in B$, we have
$$\tau(q \Delta) = \om(q) = \langle q \xi,\xi \rangle = \langle \xi \al(q) , \xi \rangle = (\Tr \ot \tau)(\al(q)) = \dim_{-A}(qK) \; .$$
This means that $\Delta = \Delta^\ell_K$. Since $\Delta^\ell_K$ is bounded, the vectors $\xi_k \in H$ are left $A$-bounded.

So, the vectors $\xi_k$ are both left and right $A$-bounded, so that the operator $S$ given by \eqref{eq.def-S} is a well defined element of $pMp$. Since
$$S = \sum_{k=1}^n \bigl(\ell(\xi_k) \ell(J(\xi_k)) + \ell(\xi_k)\ell(\xi_k)^* + \ell(J(\xi_k))^* \ell(\xi_k)^*\bigr) \; ,$$
we get that $S = S^*$. From this formula, we also get that $S$ commutes with $B$. Put $S_1 := \Delta + S$. Since $\Delta \in \cZ(B)$, it suffices to prove that $S_1$ is diffuse relative to $B$.

Write $A_1 = pAp$ and $A_2 = e A^n e$. Equip $A_1$ and $A_2$ with the non normalized traces given by restricting $\tau$ to $A_1$ and $\Tr \ot \tau$ to $A_2$. View $\xi$ as a vector in the $A_1$-$A_2$-bimodule $(\overline{\C^n} \ot p H)e$ and note that
$$\langle \xi,\xi \rangle_{A_2} = e \;\; , \;\; {}_{A_1} \langle \xi, \xi \rangle = \Delta \; .$$
Denote $L := (\overline{\C^n} \ot pH) e$. Recall that we view $L$ as an $A_1$-$A_2$-bimodule and that $\xi \in L$. Write $L' := e(\C^n \ot H p)$, view $L'$ as an $A_2$-$A_1$-bimodule and note that the anti-unitary operator
$$J_1 : L \recht L' : J_1\bigl(\sum_{k=1}^n \overline{e_k} \ot \mu_k\bigr) = \sum_{k=1}^n e_k \ot J(\mu_k)$$
satisfies $J_1(a \mu b) = b^* J_1(\mu) a^*$ for all $\mu \in L$, $a \in A_1$ and $b \in A_2$. Define $\xi' \in L'$ given by $\xi' = J_1(\xi) \Delta^{-1/2}$. Then $\xi'$ satisfies the following properties.
$$\langle \xi',\xi'\rangle_{A_1} = p \;\; , \;\; {}_{A_2} \langle \xi',\xi'\rangle = \al(\Delta^{-1}) \;\;\text{and}\;\; \al(b) \xi' = \xi' b \;\; \forall b \in B \; .$$

Define the Hilbert spaces
\begin{align*}
L\even &= L^2(A_1) \oplus \bigoplus_{m=1}^\infty \bigl( L \ot_{A_2} L' \bigr)^{\ot^m_{A_1}} \; ,   \\
L\odd &= L' \ot_{A_1} L\even = \bigoplus_{m=0}^\infty \bigl( L' \ot_{A_1}  \bigl( L \ot_{A_2} L' \bigr)^{\ot^m_{A_1}}\bigr) \; .
\end{align*}
Note that $L\even$ is an $A_1$-bimodule, while $L\odd$ is an $A_2$-$A_1$-bimodule.
Then,
\begin{equation}\label{eq.W}
W := \ell(\xi') \Delta^{1/2} + \ell(\xi)^*
\end{equation}
is a well defined bounded operator from $L\even$ to $L\odd$ and $W^* W \in B(L\even)$.

Using the natural isometry $L \ot_{A_2} L' \hookrightarrow p(H \ot_A H)p$, we define the isometry $V : L\even \recht p L^2(M) p$ given as the direct sum of the compositions of
$$\bigl( L \ot_{A_2} L' \bigr)^{\ot^m_{A_1}} \hookrightarrow \bigl( p (H \ot_A H) p\bigr)^{\ot^m_{A_1}} \hookrightarrow p \bigl( H^{\ot^{2m}_A}\bigr) p \; .$$
Then $V$ is $A_1$-bimodular and
\begin{equation}\label{eq.first-identif}
V \, W^* W = S_1 \, V \; .
\end{equation}
To compute the $*$-distribution of $B \cup \{S_1\}$ w.r.t.\ the trace $\tau$, it is thus sufficient to compute the $*$-distribution of $B \cup \{W^* W\}$ acting on $L\even$ and w.r.t.\ the vector functional implemented by $p \in L^2(A_1) \subset L\even$.

Define the closed subspaces $L^0\even \subset L\even$ and $L^0\odd \subset L\odd$ given as the closed linear span
\begin{align*}
L^0\even & = \cspan \{L^2(B) , (\xi \ot_{A_2} \xi')^{\ot_{A_1}^m} B \mid m \geq 1 \} \; , \\
L^0\odd & = \cspan \{(\xi' \ot_{A_1} (\xi \ot_{A_2} \xi')^{\ot_{A_1}^m}) B \mid m \geq 0 \} \; .
\end{align*}
Since $\xi \ot_{A_2} \xi'$ is a $B$-central vector and since $\langle \xi, \xi \rangle_{A_2} = e$ and $\langle \xi' ,\xi' \rangle_{A_1} = p$, we find that $W(L^0\even) \subset L^0\odd$ and $W^*(L^0\odd) \subset L^0\even$. So to compute the $*$-distribution of $B \cup \{W^* W\}$, we may restrict $B$ and $W^* W$ to $L^0\even$.

Consider the full Fock space $\cF(\C^2)$ of the $2$-dimensional Hilbert space $\C^2$, with creation operators $\ell_1 = \ell(e_1)$ and $\ell_2 = \ell(e_2)$ given by the standard basis vectors $e_1,e_2 \in \C^2$. Denote by $\eta$ the vector state on $B(\cF(\C^2))$ implemented by the vacuum vector $\Omega \in \cF(\C^2)$. For every $\lambda \geq 1$, consider the operator $X(\lambda) \in B(\cF(\C^2))$ given by $X(\lambda) = \sqrt{\lambda} \ell_2 + \ell_1^*$. We find that $X(\lambda)^* X(\lambda) = \lambda y^* y$ with $y = \ell_2 + \lambda^{-1/2} \ell_1^*$. It then follows from \cite[Lemma 4.3 and discussion after Definition 4.1]{Sh96} that the spectral measure of $X(\lambda)^* X(\lambda)$ has no atoms. Also for every $\lambda \geq 1$, $\eta$ is a faithful state on $\{X(\lambda)^* X(\lambda)\}\dpr$.

Identify $\cZ(B) = L^\infty(Z,\mu)$ for some standard probability space $(Z,\mu)$. View $\Delta$ as a bounded function from $Z$ to $[1,+\infty)$ and define $Y \in B(\cF(\C^2)) \ovt L^\infty(Z,\mu)$ given by $Y(z) = X(\Delta(z))$. We can view $Y$ as an element of $B(\cF(\C^2)) \ovt B$ acting on the Hilbert space $\cF(\C^2) \ot L^2(B)$. Also, $\eta \ot \tau$ is faithful on $(1 \ot B \cup \{Y^* Y\})\dpr$. Define the isometry
$$U : L^0\even \recht \cF(\C^2) \ot L^2(B) : U\bigl((\xi \ot_{A_2} \xi')^{\ot_{A_1}^m} b\bigr) = (e_1 \ot e_2)^{\ot m} \ot b \; .$$
By construction, $U W^* W = Y^*Y U$ and $U$ is $B$-bimodular. It follows that the $*$-distribution of $B \cup \{S_1\}$ w.r.t.\ $\tau$ equals the $*$-distribution of $1 \ot B \cup \{Y^*Y\}$ w.r.t.\ $\eta \ot \tau$. So there is a unique normal $*$-isomorphism
$$\Psi : (1 \ot B \cup \{Y^*Y\})\dpr \recht (B \cup \{S_1\})\dpr$$
satisfying $\Psi(1 \ot b) = b$ for all $b \in B$ and $\Psi(Y^* Y) = S_1$. Also, $\tau \circ \Psi = \eta \ot \tau$. Since for all $z \in Z$, the spectral measure of $Y(z)^* Y(z)$ has no atoms, there exists a unitary $v \in \{Y^*Y\}\dpr$ such that $(\eta \ot \tau)((1 \ot b) v^k)=0$ for all $b \in B$ and $k \in \Z \setminus \{0\}$. Taking $u = \Psi(v)$, the lemma is proved.
\end{proof}

\begin{definition}[{\cite[Definition A.2]{Va07}}]\label{def.ess-finite-index}
A von Neumann subalgebra $P$ of a tracial von Neumann algebra $(Q,\tau)$ is said to be of \emph{essentially finite index} if there exist projections $q \in P' \cap Q$ arbitrarily close to $1$ such that $P q \subset qQq$ has finite Jones index.
\end{definition}

To make the connection with \cite[Lemma 1.6]{Io11}, note that $P \subset Q$ is essentially of finite index if and only if $qQq \prec_{qQq} Pq$ for every nonzero projection $q \in P' \cap Q$.

\begin{lemma}\label{lem.complement}
Let $(A,\tau)$ be a tracial von Neumann algebra and $(H,J)$ a symmetric $A$-bimodule. Write $M = \Gamma(H,J,A,\tau)\dpr$.

Let $p \in A$ be a projection and $B \subset p A p$ a von Neumann subalgebra such that $B' \cap pAp = \cZ(B)$ and such that $\cN_{pAp}(B)\dpr$ has essentially finite index in $pAp$. Let $K_1 \subset pH$ be a $B$-$A$-subbimodule satisfying the following three properties.
\begin{enumlist}
\item $K_1$ is a direct sum of $B$-$A$-subbimodules of finite right $A$-dimension.
\item The left action of $B$ on $K_1$ is faithful.
\item The $A$-bimodule $\overline{AK_1}$ is left weakly mixing.
\end{enumlist}
Then there exists a diffuse abelian von Neumann subalgebra $D \subset B' \cap pMp$ that is in tensor product position w.r.t.\ $B$. More precisely, there exists a unitary $u \in B' \cap pMp$ such that $E_B(u^k) = 0$ for all $k \in \Z \setminus \{0\}$.
\end{lemma}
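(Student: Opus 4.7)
The strategy is to reduce the lemma to Lemma~\ref{lem.diffuse-element-commutant}, which already produces a unitary $u\in\{S\}\dpr\subset B'\cap pMp$ with $E_B(u^k)=0$ for all $k\in\Z\setminus\{0\}$, but under the stronger assumption that one has a single $B$-$A$-subbimodule $K\subset pH$ that is finitely generated as a right Hilbert $A$-module and satisfies $\Delta^\ell_K$ bounded with $\Delta^\ell_K\geq p$. The whole task is therefore to manufacture such a $K$ from $K_1$ and the normalizer and weak-mixing hypotheses.

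First, since $\cN_{pAp}(B)'\cap pAp\subset B'\cap pAp=\cZ(B)$ and $\cN_{pAp}(B)\dpr\subset pAp$ has essentially finite index, I would find $q\in\cZ(B)$ arbitrarily close to $p$ so that, after cutting everything down by $q$, the inclusion $\cN_{pAp}(B)\dpr\subset pAp$ has finite Jones index; all three hypotheses on $K_1$ survive the cut. Then decompose $K_1=\bigoplus_j K_{1,j}$ into finitely generated right $A$-modules. By spectral truncation in $\cZ(B)$ I may assume each $\Delta^\ell_{K_{1,j}}$ is bounded (at the cost of shrinking each summand's left support by an arbitrarily small amount). Faithfulness of the left $B$-action gives $\bigvee_j\operatorname{supp}(\Delta^\ell_{K_{1,j}})=p$, so a finite subsum followed by one more spectral cut produces a single finitely generated $B$-$A$-subbimodule $K_2\subset K_1$ with bounded $\Delta^\ell_{K_2}\geq\delta r$ for some $\delta>0$ and some $r\in\cZ(B)$ arbitrarily close to $p$.

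To boost the lower bound from $\delta r$ up to $p$, I would use normalizer translates. For $v\in\cN_{pAp}(B)$, the translate $vK_2\subset pH$ is again a $B$-$A$-subbimodule (since $vBv^*=B$), finitely generated as right $A$-module, with $\Delta^\ell_{vK_2}=\sigma_v(\Delta^\ell_{K_2})$, where $\sigma_v=\Ad v$ acts on $\cZ(B)$. Using the finite Jones index of $\cN_{pAp}(B)\dpr\subset pAp$ to control how many translates are required, and the left weak mixing of $\overline{AK_1}$ to guarantee pairwise $B$-$A$-orthogonal translates inside $pH$, I would select $v_1,\dots,v_N\in\cN_{pAp}(B)$ for which $\langle v_iK_2,v_jK_2\rangle_A=0$ whenever $i\neq j$, $\bigvee_i\sigma_{v_i}(r)$ is close to $p$, and $N\delta\geq 1$. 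Setting $K:=\bigoplus_i v_iK_2\subset pH$, one obtains $\Delta^\ell_K=\sum_i\sigma_{v_i}(\Delta^\ell_{K_2})$ bounded and, after a final small cut-down, $\Delta^\ell_K\geq p$. Lemma~\ref{lem.diffuse-element-commutant} applied to this $K$ then yields the desired unitary $u\in B'\cap pMp$.

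The main obstacle is the third step: producing enough normalizer translates of $K_2$ that are pairwise $B$-$A$-orthogonal inside $pH$. The left weak mixing of $\overline{AK_1}$ is the crucial input, since it forbids any finite right $A$-dimensional $A$-$A$-subbimodule from trapping $K_2$, and therefore after any finite selection $v_1,\dots,v_k$ there is always room in $pH$ to add a fresh $v_{k+1}$ yielding an orthogonal translate; the finite Jones index of $\cN_{pAp}(B)\dpr\subset pAp$ is then the quantitative input guaranteeing that finitely many such translates suffice to cover $p$.
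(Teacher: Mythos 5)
Your overall skeleton agrees with the paper: reduce to Lemma \ref{lem.diffuse-element-commutant} by producing a $B$-$A$-subbimodule of $pH$ that is finitely generated as a right $A$-module, with $\Delta^\ell$ bounded and bounded below by a central projection close to $p$, and your first two steps (truncating and summing finitely many pieces of $K_1$ to get $K_2$ with $\Delta^\ell_{K_2}\geq\delta r$) are fine. The genuine gap is your third step. First, the existence of $v_1,\ldots,v_N\in\cN_{pAp}(B)$ with $\langle v_iK_2,v_jK_2\rangle_A=0$ for $i\neq j$ is asserted, not proved, and it does not follow from the left weak mixing of $\overline{AK_1}$: the issue is not whether there is ``room in $pH$'' (weak mixing does give $\dim_{-A}(q\,\overline{AK_1})=\infty$), but whether the translates, which are constrained to come from the normalizer, can be chosen orthogonal at all. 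Every $v\in\cU(B)$ fixes $K_2$, and nothing in the hypotheses forces $\cN_{pAp}(B)$ to contain unitaries moving $K_2$ onto something orthogonal to a previously chosen finite family; orthogonalizing $vK_2$ by hand against the span of the earlier translates destroys the lower bound $\Delta^\ell_{vK_2}\geq\delta\,\sigma_v(r)$ that your counting relies on. Second, even granting orthogonality, the conclusion is quantitatively wrong as stated: from $N\delta\geq1$ and $\bigvee_i\sigma_{v_i}(r)$ close to $p$ you only get $\Delta^\ell_K\geq\delta$ on the join (the supports could be essentially disjoint); to get $\Delta^\ell_K\geq1$ you need a lower bound on the covering multiplicity, i.e.\ you must work on $\bigwedge_i\sigma_{v_i}(r)$, and then explain why that meet is still large, and finally patch: Lemma \ref{lem.diffuse-element-commutant} applied to a bimodule with $\Delta^\ell\geq z$ for $z\in\cZ(B)$ close to $p$ only yields a unitary in $(Bz)'\cap zMz$, so a further argument (the paper glues countably many such unitaries along central projections $z'_n$ with $\sum_n z'_n=p$) is needed to land in $B'\cap pMp$.

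The paper avoids all of this by a different mechanism, which is the idea missing from your proposal. It takes a \emph{maximal} family $(K_i)_{i\in I}$ of mutually orthogonal finitely generated $B$-$A$-subbimodules of $p\overline{AK_1}$; maximality forces $uK_i\subset R:=\cspan\{K_i\}$ for every $u\in\cN_{pAp}(B)$, so $R$ is a $P$-$A$-bimodule for $P=\cN_{pAp}(B)\dpr$. Then Lemma \ref{lem.infinite-dim} -- which is precisely where the essential finite index of $P\subset pAp$ and the weak mixing of $\overline{AK_1}$ enter, by converting a finite-dimensional $P$-$A$-subbimodule into a finite-dimensional $A$-$A$-subbimodule -- gives $\dim_{-A}(qR)=\infty$ for every nonzero projection $q\in B$. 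Hence $\sum_{i\in I}\tau(q\Delta^\ell_{K_i})=\infty$ for every such $q$, and a finite subsum, cut by a suitable spectral projection, already satisfies $\Delta\geq z$ with $z\in\cZ(B)$ arbitrarily close to $p$; no orthogonal normalizer translates and no counting of translates are needed. Your proposal never uses anything like Lemma \ref{lem.infinite-dim}, and its substitute use of ``finite Jones index to control how many translates are required'' is not a correct argument, so as written the proof does not go through.
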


\begin{proof}
%
We claim that for every $\eps > 0$, there exists a projection $z \in \cZ(B)$ with $\tau(p-z) < \eps$ and a $B$-$A$-subbimodule $L \subset z H$ such that $L$ is finitely generated as a right Hilbert $A$-module and such that $\Delta^\ell_L$ is bounded and satisfies $\Delta^\ell_L \geq z$. To prove this claim, denote $K := \overline{A K_1}$ and let $(K_i)_{i \in I}$ be a maximal family of mutually orthogonal nonzero $B$-$A$-subbimodules of $pK$ that are finitely generated as a right $A$-module. Denote by $R$ the closed linear span of all $K_i$. Whenever $u \in \cN_{pAp}(B)$ and $i \in I$, also $u K_i$ is a $B$-$A$-subbimodule of $pK$ that is finitely generated as a right $A$-module. By the maximality of the family $(K_i)_{i \in I}$, we get that $u K_i \subset R$. So, $u R = R$ for all $u \in \cN_{pAp}(B)$. Writing $P := \cN_{pAp}(B)\dpr$, we conclude that $R$ is a $P$-$A$-subbimodule of $pK$.

Since $P \subset pAp$ is essentially of finite index and since $\bim{A}{K}{A}$ is left weakly mixing, Lemma \ref{lem.infinite-dim} says that for every projection $q \in P$, the right $A$-module $qR$ is either $\{0\}$ or of infinite right $A$-dimension. By the assumptions of the lemma and the maximality of the family $(K_i)_{i \in I}$, the left $B$-action on $R$ is faithful. So $qL \neq \{0\}$ and thus $\dim_{-A}(qL) = \infty$ for every nonzero projection $q \in B$. This means that for every nonzero projection $q \in B$,
$$\sum_{i \in I} \tau(q \Delta^\ell_{K_i}) = \sum_{i \in I} \dim_{-A}(q K_i) = \dim_{-A}(q R) = \infty \; .$$
So we can find a projection $z \in \cZ(B)$ and a finite subset $I_0 \subset I$ such that $\tau(p-z) < \eps$ and such that the operator $\Delta := \sum_{i \in I_0} \Delta^\ell_{K_i} z$ is bounded and satisfies $\Delta \geq z$. Defining $L = \sum_{i \in I_0} z K_i$, the claim is proved.

Combining the claim with Lemma \ref{lem.diffuse-element-commutant}, we find for every $\eps > 0$, a projection $z \in \cZ(B)$ with $\tau(p-z) < \eps$ and a unitary $u \in (Bz)' \cap zMz$ such that $E_B(u^k) = 0$ for all $k \in \Z \setminus \{0\}$. So, we find projections $z_n \in \cZ(B)$ and unitaries $u_n \in (Bz_n)' \cap z_n M z_n$ such that $E_B(u_n^k) = 0$ for all $k \in \Z \setminus \{0\}$ and such that $\bigvee_n z_n = p$. We can then choose projections $z'_n \in \cZ(B)$ with $z'_n \leq z_n$ and $\sum_n z'_n = p$. Defining $u = \sum_n u_n z'_n$, we have found a unitary in $B' \cap pMp$ satisfying $E_B(u^k) = 0$ for all $k \in \Z \setminus \{0\}$. So, the lemma is proved.
\end{proof}

Above we also needed the following two lemmas.

\begin{lemma}\label{lem.no-complement}
Let $(N,\tau)$ be a tracial von Neumann algebra and $B \subset N$ an abelian von Neumann subalgebra. Assume that $D \subset B' \cap N$ is a diffuse abelian von Neumann subalgebra that is in tensor product position w.r.t.\ $B$. Then there is no nonzero projection $q \in B' \cap N$ satisfying $q (B' \cap N) q = B q$.
\end{lemma}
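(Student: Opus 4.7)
The plan is to argue by contradiction. Suppose $q \in B' \cap N$ is a nonzero projection with $q(B' \cap N)q = Bq$. Since $D$ is diffuse and in tensor product position with $B$, we may pick a Haar unitary $u \in \cU(D)$ satisfying $E_B(u^k) = 0$ for every $k \in \Z \setminus \{0\}$. For each $k \in \Z$, the compression $qu^kq$ lies in $Bq$, so $qu^kq = b_k q$ for some $b_k \in B$. Writing $e := E_B(q)$ and $q_0 := E_{B \vee \{u\}\dpr}(q)$, and Fourier-expanding $q_0 = \sum_{k \in \Z} c_k u^k$ inside the abelian algebra $B \vee \{u\}\dpr = B \ovt \{u\}\dpr$, a short calculation using $B$-bimodularity of $E_B$ and $E_B(u^m) = \delta_{m,0}$ yields $b_k e = c_{-k}$.

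The crux is the following sharp Bessel estimate:
\begin{equation*}
\sum_{k \in \Z} \|qu^kq\|_2^2 \;\leq\; 1.
\end{equation*}
Identify $B \vee \{u\}\dpr$ with $L^\infty(X \times \T)$ (so $B = L^\infty(X)$). The operator inequality $0 \leq q_0 \leq 1$ gives $q_0(x,\cdot)^2 \leq q_0(x,\cdot)$ pointwise, and Parseval in the $\T$-variable produces $\sum_k |c_k(x)|^2 \leq e(x)$. Combined with $b_k e = c_{-k}$ and the easy fact that $q \leq \operatorname{supp}(e)$, dividing by $e$ on its support yields $\sum_k |b_k|^2 e \leq 1$ in $B$; taking the trace and noting $\tau(|b_k|^2 e) = \|b_k q\|_2^2 = \|qu^kq\|_2^2$ gives the displayed bound. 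In particular $\|qu^kq\|_2^2 \to 0$ as $|k| \to \infty$.

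To close the proof, decompose $q = q_0 + q_1$ with $q_1 \perp L^2(B \vee \{u\}\dpr)$. Since $q_0$ lies in the abelian algebra $B \vee \{u\}\dpr$ and thus commutes with every $u^k$, a direct computation gives
\begin{equation*}
\|qu^kq\|_2^2 \;=\; \tau(qu^{-k}qu^k) \;=\; \|q_0\|_2^2 + \langle u^{-k}q_1u^k,\, q_1\rangle,
\end{equation*}
so $\langle u^{-k}q_1u^k, q_1\rangle \to -\|q_0\|_2^2$ as $k \to \infty$. The mean ergodic theorem applied to the unitary $\Ad(u)$ on $L^2(N)$ shows that the Cesaro averages $\frac{1}{N}\sum_{k=0}^{N-1}\langle u^{-k}q_1u^k, q_1\rangle$ converge to $\|P(q_1)\|_2^2 \geq 0$, where $P$ is the orthogonal projection onto $\{\xi \in L^2(N) : u\xi = \xi u\}$. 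Since an ordinary limit equals its Cesaro limit, $-\|q_0\|_2^2 \geq 0$, forcing $q_0 = 0$, then $E_B(q) = c_0 = 0$, and finally $q = 0$, the desired contradiction. The main obstacle I anticipate is the Bessel estimate: it rests on carefully reconciling the compression coefficients $b_k \in B$ produced by the hypothesis $q(B' \cap N)q = Bq$ with the Fourier coefficients of $q_0$, and on exploiting positivity of $q_0$ inside the \emph{abelian} algebra $B \vee \{u\}\dpr$; once the bound is established, the mean-ergodic step is essentially bookkeeping.
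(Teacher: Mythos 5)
Your proof is correct, and it takes a genuinely different route from the paper's. The paper argues structurally: writing $P = B' \cap N$ and $p = zq$ for a suitable nonzero central projection $z = \sum_{i=1}^n v_i v_i^* \in \cZ(P)$ with $v_i$ partial isometries in $Pq$, the hypothesis $qPq = Bq$ gives $Pp = \lspan\{v_i B \mid i = 1,\ldots,n\}$, so $L^2(P)p$ is a finitely generated right Hilbert $B$-module; right multiplication by $p$ then embeds $L^2(Q)e$ (with $Q = B \vee D$ and $e$ the support of $E_Q(p)$) injectively and right $B$-linearly into $L^2(P)p$, contradicting $Q \cong B \ovt D$ with $D$ diffuse. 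You instead work quantitatively with the Haar unitary $u$: from $qu^kq = b_k q$, the identification $B \vee \{u\}\dpr \cong B \ovt L(\Z)$ and Parseval give the Bessel bound $\sum_k \|qu^kq\|_2^2 \leq 1$, and the mean ergodic theorem for $\Ad u$ upgrades $\|qu^kq\|_2 \recht 0$ to $E_{B \vee \{u\}\dpr}(q) = 0$, hence $E_B(q)=0$ and $q = 0$. The step you flagged as delicate, $b_k E_B(q) = c_{-k}$, does hold, and the cleanest reason is worth recording: since $q$ commutes with $B$, the corner $qN(1-q)$ is orthogonal to $B$ in $L^2(N)$ (for $b \in B$, $\tau(b^* q u^k (1-q)) = \tau((1-q) b^* q u^k) = \tau(b^* (1-q) q u^k) = 0$), so $E_B(qu^k) = E_B(qu^kq) = b_k E_B(q)$, which is exactly your identity since $c_{-k} = E_B(q_0 u^k) = E_B(qu^k)$; the remaining computations (vanishing of the cross terms against $q_1$, $\tau(q_0 u^{-k} q_0 u^k) = \|q_0\|_2^2$, Ces\`aro limit equal to the ordinary limit) all check out. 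As for what each approach buys: the paper's proof is shorter and avoids Fourier/spectral analysis, at the cost of invoking comparison theory to produce the $v_i$ and the language of finitely generated Hilbert $B$-modules; yours is more elementary and quantitative, uses only a single unitary $u \in B' \cap N$ with $E_B(u^k) = 0$ for $k \neq 0$ (precisely the form in which tensor product position is produced and used in Lemma \ref{lem.complement}), and yields the extra numerical information $\sum_k \|qu^kq\|_2^2 \leq 1$.
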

\begin{proof}
Put $P = B' \cap N$ and assume that $q \in P$ is a nonzero projection such that $qPq = B q$. Note that $B \subset \cZ(P)$ because $B$ is abelian. Take a nonzero projection $z \in \cZ(P)$ such that $z = \sum_{i=1}^n v_i v_i^*$ where $v_1,\ldots,v_n$ are partial isometries in $P q$. Note that $z q \neq 0$ and write $p = zq$. Then,
$$P p = P z q = z P q = \lspan \{v_i q P q \mid i=1,\ldots ,n\} = \lspan \{ v_i B \mid i=1,\ldots,n\} \; .$$
So, $L^2(P) p$ is finitely generated as a right Hilbert $B$-module. Define $Q = B \vee D$ and denote by $e \in Q$ the support projection of $E_Q(p)$. Then $\xi \mapsto \xi p$ is an injective right $B$-linear map from $L^2(Q)e$ to $L^2(P) p$. So also $L^2(Q)e$ is finitely generated as a right Hilbert $B$-module. Since $Q \cong B \ovt D$ with $D$ diffuse and since $e$ is a nonzero projection in $Q \cong B \ovt D$, this is absurd.
\end{proof}

\begin{lemma}\label{lem.infinite-dim}
Let $(A,\tau)$ be a tracial von Neumann algebra and $\bim{A}{K}{A}$ an $A$-bimodule that is left weakly mixing. Let $p \in A$ be a projection and $P \subset pAp$ a von Neumann subalgebra that is essentially of finite index (see Definition \ref{def.ess-finite-index}). If $L \subset pK$ is a $P$-$A$-subbimodule and $q \in P$ is a projection such that $q L \neq \{0\}$, then the right $A$-dimension of $qL$ is infinite.
\end{lemma}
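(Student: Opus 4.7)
The plan is to argue by contradiction. Suppose that $qL \neq \{0\}$ but $\dim_{-A}(qL) < \infty$; the aim is to produce a nonzero $A$-$A$-subbimodule of $K$ of finite right $A$-dimension, which contradicts the left weak mixing of $\bim{A}{K}{A}$ via Proposition \ref{prop5}.

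First I would reduce to the case $q = p$. The subspace $qL$ is a $qPq$-$A$-subbimodule of $qK$ of finite right $A$-dimension, so the natural $P$-$A$-bimodular map
$$Pq \otimes_{qPq} qL \longrightarrow L, \qquad bq \otimes \xi \mapsto b\xi,$$
has image the $P$-$A$-subbimodule $\overline{PqL} \subset L$ containing $qL$ (indeed, $\xi = q\cdot \xi$ for $\xi \in qL$, so $qL \subset \overline{PqL}$). Since $Pq$ has finite left $qPq$-dimension, we conclude $\dim_{-A}(\overline{PqL}) < \infty$. Replacing $L$ by $\overline{PqL}$, it then suffices to treat the case where $L$ is a nonzero $P$-$A$-subbimodule of $pK$ with $\dim_{-A}(L) < \infty$.

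Next, using essentially finite index, I would choose a projection $q_0 \in P' \cap pAp$ with $\tau(p - q_0)$ so small that $q_0 L \neq \{0\}$ (possible since $L$ is finitely generated as a right Hilbert $A$-module, so the cut-down only loses at most $n\,\tau(p-q_0)$ of dimension) and such that $Pq_0 \subset q_0(pAp)q_0 = q_0 A q_0$ has finite Jones index $\lambda$ (the equality $q_0(pAp)q_0 = q_0 A q_0$ uses $q_0 \leq p$). The left $Pq_0$-module $q_0 L$ then induces, via a Pimsner-Popa basis for $q_0 A q_0$ over $Pq_0$, a $q_0 A q_0$-$A$-subbimodule $L_0 := \overline{q_0 A q_0 \cdot q_0 L}$ of $q_0 K$ with $\dim_{-A}(L_0) \leq \lambda \cdot \dim_{-A}(q_0 L) < \infty$. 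A second induction through the $A$-$q_0 A q_0$-bimodule $A q_0$ (which has finite right $q_0 A q_0$-dimension since $A$ is a finite tracial von Neumann algebra and $q_0$ is a projection) produces the $A$-$A$-subbimodule $\overline{A L_0} \subset K$, still of finite right $A$-dimension. Since $q_0 L \subset L_0 \subset \overline{AL_0}$ and $q_0 L \neq \{0\}$, we have found the required nonzero $A$-$A$-subbimodule of $K$ of finite right $A$-dimension, contradicting the weak mixing.

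The main technical burden will be the bookkeeping of right $A$-dimensions across the two successive inductions: one exploits that the Pimsner-Popa basis provided by the finite Jones index of $Pq_0 \subset q_0 A q_0$ controls $\dim_{-A}(q_0 A q_0 \otimes_{Pq_0} q_0 L)$ in terms of $\dim_{-A}(q_0 L)$ and $\lambda$, and similarly for the second induction $L_0 \rightsquigarrow \overline{AL_0}$. A subtle point arising from having only \emph{essentially} finite index (rather than a genuine finite-index inclusion of $P$ in $pAp$) is the simultaneous requirement that $q_0$ be close enough to $p$ for $q_0 L$ to remain nonzero; this is handled by the standard estimate on $\dim_{-A}((p-q_0)L)$ for finitely generated right Hilbert $A$-modules.
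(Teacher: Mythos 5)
Your overall strategy is the same as the paper's: induce $qL$ up through the finite-index data to a nonzero $A$-$A$-subbimodule of $K$ of finite right $A$-dimension and contradict weak mixing via Proposition \ref{prop5}. But the dimension bookkeeping, which you yourself identify as the main burden, contains a genuine gap: at each induction step you infer finiteness of the right $A$-dimension of the induced module from mere finiteness of a dimension or of the Jones index, and for non-factorial inclusions this inference fails. Concretely, finite right $qPq$-dimension of $Pq$ (incidentally, the relevant structure is $Pq$ as a \emph{right} $qPq$-module, not its left $qPq$-dimension) does not make $Pq$ finitely generated over $qPq$: when centers are diffuse one only gets a decomposition $L^2(P)q \cong \bigoplus_n r_n L^2(qPq)$ with $\sum_n \tau(r_n) < \infty$ but possibly infinitely many nonzero $r_n$, and then $\dim_{-A}\bigl(Pq \otimes_{qPq} qL\bigr) = \sum_n \dim_{-A}(r_n\, qL)$, where each summand can be as large as $\dim_{-A}(qL)$ independently of $\tau(r_n)$, so the sum may diverge. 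The same problem recurs in your second step: finite Jones index of $Pq_0 \subset q_0Aq_0$ in the sense of Definition \ref{def.ess-finite-index} does not provide a \emph{finite} Pimsner--Popa basis (only a square-summable, possibly infinite one), so the bound $\dim_{-A}(L_0) \leq \lambda \cdot \dim_{-A}(q_0L)$ is not justified; and likewise for the final induction through $Aq_0$, whose center-valued right $q_0Aq_0$-dimension need not be bounded. Thus "$\dim_{-A}(\overline{PqL}) < \infty$", the key output of your reduction to $q=p$, is simply not established (and is false for general $P$-$A$-bimodules $L$ with $\dim_{-A}(qL)<\infty$).

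The repair is exactly what the paper's proof does, and it is the reason the paper's argument looks the way it does: one must first cut by projections that force \emph{finite generation} rather than finite dimension. The paper chooses central projections $z \in \cZ(P)$ arbitrarily close to $p$ with $Pzq$ finitely generated as a right $qPq$-module, and projections $p_1 \in P' \cap pAp$ arbitrarily close to $p$ with $Ap_1$ finitely generated, purely algebraically, as a right $Pp_1$-module (this is what essential finite index yields via \cite[A.2]{Va07}); taking both close enough to $p$ that $p_1 z q L \neq \{0\}$, the module $Ap_1zq$ is finitely generated over $qPq$, so $\overline{Ap_1zqL}$ is a nonzero $A$-$A$-subbimodule with $\dim_{-A} \leq (\text{number of generators}) \cdot \dim_{-A}(qL) < \infty$. (Your auxiliary point that $q_0 L \neq \{0\}$ for $q_0$ close to $p$ is fine, but for the support-projection reason rather than finite generation of $L$.) So the idea is right, but as written the proof would fail whenever the relevant center-valued dimensions are unbounded, and fixing it requires precisely the extra projections the paper introduces.
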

\begin{proof}
Assume for contradiction that $q \in P$ is a projection such that $q L$ is nonzero and such that $qL$ has finite right $A$-dimension.

Since $P \subset pAp$ is essentially of finite index, there exist projections $p_1 \in P' \cap pAp$ that lie arbitrarily close to $p$ such that $Ap_1$ is finitely generated as a right $Pp_1$ module (purely algebraically using a Pimsner-Popa basis, see e.g.\ \cite[A.2]{Va07}). There also exist central projections $z \in \cZ(P)$ that lie arbitrarily close to $p$ such that $P z q$ is finitely generated as a right $qPq$-module. Take such $p_1$ and $z$ with $p_1 z q L \neq \{0\}$. Then $A p_1 z q$ is finitely generated as a right $qPq$-module. Therefore, the closed linear span of $A p_1 z q L$ is a nonzero $A$-subbimodule of $K$ having finite right $A$-dimension. This contradicts the left weak mixing of $\bim{A}{H}{A}$.
\end{proof}

\section{\boldmath Compact groups, free subsets, $c_0$ probability measures and the proof of Theorem \ref{thm.main-B}}

For every second countable compact group $K$ with Haar probability measure $\mu$ and for every symmetric probability measure $\nu$ on $K$, we consider $A=L^\infty(K,\mu)$, the $A$-bimodule $H_\nu = L^2(K\times K,\mu \times \nu)$ given by \eqref{eq.my-bim-compact-group} and the symmetry $J_\nu : H_\nu \recht H_\nu$ given by \eqref{eq.my-symm}. We put $M = \Gamma(H_\nu,J_\nu,A,\mu)\dpr$.

In Proposition \ref{prop.char} below, we characterize when the bimodule $H_\nu$ is mixing (so that $M$ becomes strongly solid by Corollary \ref{cor.rel-strong-solid}) and when $A \subset M$ is an $s$-MASA. For the latter, the crucial property will be that the support $S$ of $\nu$ is of the form $S = F \cup F^{-1}$ where $F \subset K$ is a closed subset that is \emph{free} in the following sense.

\begin{definition}\label{def.free}
A subset $F$ of a group $G$ is called \emph{free} if
$$g_1^{\eps_1} \cdots g_n^{\eps_n} \neq e$$
for all nontrivial \emph{reduced words}, i.e.\ for all $n \geq 1$ and all $g_1,\ldots,g_n \in F$, $\eps_1,\ldots,\eps_n \in \{\pm 1\}$ satisfying $\eps_i = \eps_{i+1}$ whenever $1 \leq i \leq n-1$ and $g_i = g_{i+1}$.
\end{definition}

On the other hand, the mixing property of $H_\nu$ will follow from the following $c_0$ condition on the measure $\nu$.

Whenever $K$ is a compact group, we denote by $\lambda : K \recht \cU(L^2(K))$ the left regular representation. For every probability measure $\nu$ on $K$ and every unitary representation $\pi : K \recht \cU(H)$, we denote
$$\pi(\nu) = \int_K \pi(x) \; d\nu(x) \; .$$

\begin{definition}\label{def.c0}
A probability measure $\nu$ on a compact group $K$ is said to be $c_0$ if the operator $\lambda(\nu) \in B(L^2(K))$ is compact.
\end{definition}

Note that $\nu$ is $c_0$ if and only if $\lambda(\nu)$ belongs to the reduced group C$^*$-algebra $C^*_r(K)$. Also, since the regular representation of $K$ decomposes as the direct sum of all irreducible representations of $K$, each appearing with multiplicity equal to its dimension, we get that a probability measure $\nu$ is $c_0$ if and only if
$$\lim_{\pi \in \Irr(K), \pi \recht \infty} \|\pi(\nu)\| = 0 \; ,$$
i.e.\ if and only if the map $\Irr(K) \recht \R : \pi \mapsto \|\pi(\nu)\|$ is $c_0$. In particular, when $K$ is an abelian compact group, a probability measure $\nu$ on $K$ is $c_0$ if and only if the Fourier transform of $\nu$ is a $c_0$ function on $\widehat{K}$.

\begin{proposition}\label{prop.char}
Let $K$ be a second countable compact group $K$ with Haar probability measure $\mu$. Put $A = L^\infty(K,\mu)$. Let $\nu$ be a symmetric probability measure on $K$ without atoms. Define the $A$-bimodule $H_\nu$ with symmetry $J_\nu$ by \eqref{eq.my-bim-compact-group} and \eqref{eq.my-symm}. Denote by $M = \Gamma(H_\nu,J_\nu,A,\mu)\dpr$ the associated tracial von Neumann algebra. Let $S$ be the support of $\nu$, i.e.\ the smallest closed subset of $K$ with $\nu(S) = 1$.
\begin{enumlist}
\item The bimodule $H_\nu$ is weakly mixing, $A \subset M$ is a singular MASA, $M$ has no Cartan subalgebra and $A \subset M$ is a maximal amenable subalgebra.
\item The von Neumann algebra $M$ has no amenable direct summand. The center $\cZ(M)$ of $M$ equals $L^\infty(K/K_0)$ where $K_0 \subset K$ is the closure of the subgroup generated by $S$. So if $S$ topologically generates $K$, then $M$ is a nonamenable II$_1$ factor.
\item If $S$ is of the form $S = F \cup F^{-1}$ where $F \subset K$ is a closed subset that is free in the sense of Definition \ref{def.free}, then $A \subset M$ is an $s$-MASA.
\item If $\nu$ is $c_0$ in the sense of Definition \ref{def.c0}, then the bimodule $H_\nu$ is mixing. So then, $M$ is strongly solid and whenever $B \subset M$ is an amenable von Neumann subalgebra for which $B \cap A$ is diffuse, we have $B \subset A$.
\end{enumlist}
\end{proposition}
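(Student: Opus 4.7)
The plan is to derive all four parts from the structural results proven earlier once two facts about the bimodule $\bim{A}{H_\nu}{A}$ are checked: it is weakly mixing when $\nu$ is atomless, and it is mixing when $\nu$ is $c_0$. I would first describe the subbimodule structure. Under the coordinate change $(x,y)\mapsto(x,xy)$, the generating functions $F(xy)G(x)$ of the left and right $A$-actions become $F(u)G(x)$, so together the two actions generate all of $L^\infty(K\times K,\mu\times\nu)$, and every sub-$A$-$A$-bimodule of $H_\nu$ is of the form $L^2(T)$ for a measurable $T\subset K\times K$. Its right $A$-dimension is $\int_K \dim L^2(\{y:(x,y)\in T\},\nu)\,d\mu(x)$, and the atomless hypothesis makes every positive-measure vertical slice infinite-dimensional, so only null $T$ give finite right $A$-dimension. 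By Proposition~\ref{prop5}, $H_\nu$ is weakly mixing; faithfulness of the left action is immediate. Corollary~\ref{cor.no-Cartan-weak-mixing} then gives absence of Cartan subalgebras, and Theorem~\ref{thm.max-amen}(1) identifies $\cZ(M)$: a function $a\in A$ centralizes $H_\nu$ iff $a(xy)=a(x)$ for $\mu\times\nu$-a.e.\ $(x,y)$, and since the set of $y\in K$ for which the $y$-translate of $a$ equals $a$ in $L^2(\mu)$ is a closed subgroup containing $\mathrm{supp}\,\nu=S$, hence contains $K_0$, we get $\cZ(M)=L^\infty(K/K_0)$. Theorem~\ref{thm.max-amen}(3) makes $A\subset M$ maximal amenable, hence singular (any $u\in\cN_M(A)\setminus A$ would yield a strictly larger amenable subalgebra $A\vee\{u\}\dpr$, an abelian-by-cyclic crossed product), and the same result rules out amenable direct summands, so parts 1 and 2 are covered.

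For part 3, I would construct an explicit $s$-cyclic vector. The $A$-balancing $\xi_1\cdot G\otimes\xi_2\sim\xi_1\otimes G\cdot\xi_2$ forces the identification $x_1=x_2y_2$, and iterating gives
$$H_\nu^{\ot_A^n}\cong L^2(K\times K^n,\mu\times\nu^n)$$
in coordinates $(x,y_1,\ldots,y_n)$ with left and right $A$-actions by $F(xy_ny_{n-1}\cdots y_1)$ and $G(x)$ respectively. Hence the whole $A$-bimodule action factors as pullback of multiplication by $L^\infty(K\times K)$ along
$$\pi_n : K\times K^n \recht K\times K \; , \quad (x,y_1,\ldots,y_n)\mapsto (x,y_ny_{n-1}\cdots y_1) \; .$$
The key technical step is to show $\pi_n$ is essentially injective on $K\times S^n$ with respect to $\mu\times\nu^n$: a collision $y_n\cdots y_1=y_n'\cdots y_1'$ with $(y_i)\neq(y_i')$ in $S^n$ yields a nontrivial relation $y_n'\cdots y_1'y_1^{-1}\cdots y_n^{-1}=e$ of length $2n$ in $S=F\cup F^{-1}$, and freeness of $F$ forces it to reduce to the empty word via adjacent cancellations; each cancellation schedule imposes equations of the form $y_i=y_j^{\pm 1}$, and the atomless hypothesis makes each of the finitely many resulting lower-dimensional constraint sets have $\nu^{2n}$-measure zero. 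Essential injectivity gives $\pi_n^* L^\infty(K\times K)$ weakly dense in $L^\infty(K\times K^n,\mu\times\nu^n)$, so $\xi_n:=1\in L^2(K\times K^n)$ satisfies $\overline{A\xi_n A}=H_\nu^{\ot_A^n}$. Setting $\xi_0:=1$ and $\xi:=\sum_{n\geq 0}2^{-n}\xi_n\in L^2(M)$ yields the required $s$-cyclic vector.

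For part 4, fix right bounded $\xi,\eta\in H_\nu$ and set $g:=\overline{\xi}\eta\in L^\infty(K\times K)$. Then
$$\langle b\xi,\eta\rangle_A(x)=\int_K \overline{b(xy)}\,g(x,y)\,d\nu(y) \; ,$$
so the problem reduces to compactness on $L^2(K,\mu)$ of the operator $S_g b(x):=\int b(xy)\overline{g(x,y)}\,d\nu(y)$. Approximating $g$ in $L^\infty$-norm by finite sums $\sum_i\alpha_i(x)\beta_i(y)$ writes $S_g$ as a norm limit of operators $M_{\alpha_i}\rho(\overline{\beta_i}\nu)$ on $L^2(K,\mu)$, and a short Peter--Weyl argument shows that each $\rho(\overline{\beta}\nu)$ is compact precisely because $\nu$ is $c_0$: the Fourier components of $\beta$ couple a given $\pi\in\Irr(K)$ to finitely many fixed $\sigma$'s, and the irreducible components of $\pi\otimes\overline{\sigma}$ all go to infinity as $\pi$ does, making $\|\pi(\overline{\beta}\nu)\|\to 0$. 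Weak convergence $b_n\to 0$ then gives $\|\langle b_n\xi,\eta\rangle_A\|_2\to 0$, so $H_\nu$ is mixing and Corollary~\ref{cor.rel-strong-solid} yields strong solidity of $M$. For the final statement, an amenable $B\subset M$ is automatically amenable relative to $A$, and if $B\cap A$ is diffuse, any weakly null sequence in $\cU(B\cap A)$ together with mixing of $H_\nu$ makes $\bim{B\cap A}{H_\nu}{A}$ left weakly mixing, so Theorem~\ref{thm.max-amen}(2) forces $B\subset A$. The main obstacle I anticipate is the combinatorial part of the essential injectivity of $\pi_n$ in part 3, which requires a careful accounting of the cancellation schedules for reduced-word collisions of length $2n$.
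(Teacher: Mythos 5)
Your treatment of parts 1 and 2 is correct (your identification of every $A$-subbimodule of $H_\nu$ with an $L^2(T)$ and your closed-subgroup argument for the center are legitimate variants of the paper's route via Proposition \ref{prop5}(3) and a mollification argument), but part 3 has a genuine gap at the last step. Your collision analysis only compares two products of the \emph{same} length $n$, so it only proves that each tensor power $H_\nu^{\ot_A^n}$ is a cyclic $A$-bimodule. Passing from this to cyclicity of $L^2(M)=L^2(A)\oplus\bigoplus_n H_\nu^{\ot_A^n}$ with the vector $\xi=\sum_n 2^{-n}\xi_n$ is not automatic: since $A$ is abelian, the von Neumann algebra generated by the left and right actions on $L^2(M)$ is abelian, and a direct sum of cyclic representations of an abelian algebra admits a cyclic vector only if the underlying spectral measures are pairwise mutually singular (already $L^2(A)\oplus L^2(A)$ is not a cyclic $A$-bimodule, whatever vector one sums up). Concretely, writing $\nu_n$ for the distribution of a product of $n$ letters of $S$, you must additionally prove $\nu_n\perp\nu_m$ for $n\neq m$ and $\nu_n(\{e\})=0$, i.e.\ that products of words of different lengths are essentially disjoint subsets of $K$ -- a use of freeness \emph{across} lengths that your argument never makes. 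This is exactly what the paper's proof supplies: it shows the sets $S_n=\pi_n(W_n)$ are pairwise disjoint and injectively parametrized, and then identifies $\bim{A}{L^2(M)}{A}$ with the single bimodule $L^2(K\times K,\mu\times\eta)$, $\eta=\tfrac12\delta_e+\sum_n 2^{-n-1}\nu_n$, from which cyclicity is immediate. Your cancellation-pattern argument does extend to two words of different lengths (the concatenated word again has each variable occurring once), so the gap is fixable by your own method, but as written the conclusion that $\xi$ is $s$-cyclic does not follow.

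In part 4 two steps fail as stated: for right-bounded $\xi,\eta$ the function $g=\overline{\xi}\eta$ need not be essentially bounded (right-boundedness only controls $\int|\xi(x,y)|^2\,d\nu(y)$ uniformly in $x$), and even for $g\in L^\infty(K\times K)$ one cannot in general approximate $g$ in the uniform norm by finite sums $\sum_i\alpha_i(x)\beta_i(y)$: such approximability forces the family of slices $\{g(x,\cdot)\}_x$ to be totally bounded in $L^\infty$, which fails e.g.\ for the indicator of $\{x<y\}$ on $[0,1]^2$. The standard repair, which is also the paper's route, is to first reduce the mixing condition to the cyclic vector $\xi_0=1$: for $\xi=a\xi_0 b$ and $\eta=c\xi_0 d$ one has $\langle b_n\xi,\eta\rangle_A=b^*\,\vphi(a^*b_n^*c)\,d$ with $\vphi(x)=\langle\xi_0,x\xi_0\rangle_A=\rho(\nu)(x)$, and the error in replacing arbitrary right-bounded vectors by finite sums of such vectors is controlled in $\|\cdot\|_H$-norm using $\|\ell(\xi)\|$, resp.\ $\|\ell(\eta')\|$ for the fixed approximant. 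After this reduction only compactness of $\rho(\nu)$ on $L^2(K,\mu)$ is needed, which is precisely the $c_0$ hypothesis, and your Peter--Weyl analysis of $\rho(\overline{\beta}\nu)$ becomes unnecessary. Your final deductions (strong solidity via Corollary \ref{cor.rel-strong-solid}, and $B\subset A$ via Theorem \ref{thm.max-amen}) agree with the paper.
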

\begin{proof}
1.\ Note that
\begin{equation}\label{eq.Hnu-n}
H_\nu^{\ot^n_A} \cong L^2(K \times \underbrace{K \times \cdots \times K}_{\text{$n$ times}},\mu \times \underbrace{\nu \times \cdots \times \nu}_{\text{$n$ times}})
\end{equation}
with the $A$-bimodule structure given by
$$(F \cdot \xi \cdot G)(x,y_1,\ldots,y_n) = F(xy_1\cdots y_n) \, \xi(x,y_1,\ldots,y_n) \, G(x) \; .$$
Define $D \subset K \times K$ given by $D = \{(y,y^{-1}) \mid y \in K\}$. Since $\nu$ has no atoms, we have $(\nu \times \nu)(D) = 0$. It then follows that $H_\nu \ot_A H_\nu$ has no nonzero $A$-central vectors. By Proposition \ref{prop5}, the $A$-bimodule $H_\nu$ is weakly mixing. So also $L^2(M) \ominus L^2(A)$ is a weakly mixing $A$-bimodule, implying that $\cN_M(A) \subset A$. So, $A \subset M$ is a MASA and this MASA is singular. By Theorem \ref{thm.absence-Cartan}, $M$ has no Cartan subalgebra. By Theorem \ref{thm.max-amen}, we get that $A \subset M$ is a maximal amenable subalgebra.

2.\ Since $H_\nu$ is weakly mixing, we get from Theorem \ref{thm.max-amen} that $M$ has no amenable direct summand and that $\cZ(M)$ consists of all $a \in A$ satisfying $a \cdot \xi = \xi \cdot a$ for all $\xi \in H_\nu$. It is then clear that $L^\infty(K/K_0) \subset \cZ(M)$. To prove the converse, fix $a \in A$ with $a \cdot \xi = \xi \cdot a$ for all $\xi \in H_\nu$. We find in particular that $a(xy) = a(x)$ for $\mu \times \nu$-a.e.\ $(x,y) \in K \times K$. Let $\cU_n$ be a decreasing sequence of basic neighborhoods of $e$ in $K$. Define the functions $b_n$ given by
$$b_n(y) = \mu(\cU_n)^{-1} \int_{\cU_n} a(xy) \, d\mu(x) \; .$$
For every fixed $n$, the functions $b_n$ still satisfy $b_n(xy) = b(x)$ for $\mu \times \nu$-a.e.\ $(x,y) \in K \times K$. But the functions $b_n$ are continuous. It follows that $b_n(xy) = b_n(x)$ for all $x \in K$ and all $y \in S$. So, $b_n \in C(K/K_0)$. Since $\lim_n \|b_n -a\|_1 = 0$, we get that $a \in L^\infty(K/K_0)$.

3.\ Denote by $W_n \subset (F \cup F^{-1})^n$ the subset of reduced words of length $n$. Since $\nu$ has no atoms, we find that $\nu^n(W_n) = 1$. Denote by $\pi_n : K^n \recht K$ the multiplication map and put $S_n := \pi_n(W_n)$. Since $F$ is free, the subsets $S_n \subset K$ are disjoint. By freeness of $F$, we also have that the restriction of $\pi_n$ to $W_n$ is injective. Define the probability measures $\nu_n := (\pi_n)_*(\nu^n)$ and then $\eta = \frac{1}{2} \delta_0 + \sum_{n=1}^\infty 2^{-n-1} \nu_n$. Using \eqref{eq.Hnu-n}, it follows that $\bim{A}{L^2(M)}{A}$ is isomorphic with the $A$-bimodule
$$L^2(K \times K, \mu \times \eta) \quad\text{with}\quad (F \cdot \xi \cdot G)(x,y) = F(xy) \, \xi(x,y) \, G(x) \; .$$
So, $\bim{A}{L^2(M)}{A}$ is a cyclic bimodule and $A \subset M$ is an $s$-MASA.

4.\ Define $\xi_0 \in H_\nu$ by $\xi_0(x,y) = 1$ for all $x,y \in K$. Denote by $\vphi : A \recht A$ the completely positive map given by $\vphi(a) = \langle \xi_0, a \xi_0\rangle_A$. To prove that $H_\nu$ is mixing, it is sufficient to prove that $\lim_n \|\vphi(a_n)\|_2 = 0$ whenever $(a_n)$ is a bounded sequence in $A$ that converges weakly to $0$. Denoting by $\rho : K \recht L^2(K)$ the right regular representation, we get that $\vphi(a) = \rho(\nu)(a)$ for all $a \in A \subset L^2(K)$. Since $\rho(\nu)$ is a compact operator, we indeed get that $\lim_n \|\rho(\nu)(a_n)\|_2 = 0$. So, $H_\nu$ is a mixing $A$-bimodule. By Corollary \ref{cor.rel-strong-solid}, $M$ is strongly solid. The remaining statement follows from Theorem \ref{thm.max-amen}.
\end{proof}

\begin{remark}\label{rem.free-bogol-abelian}
In the special case where $K$ is abelian, we identify $L^\infty(K,\mu) = L(G)$, with $G:=\widehat{K}$ being a countable abelian group. Then the symmetric $L^\infty(K,\mu)$-bimodule $H_\nu$ given by \eqref{eq.my-bim-compact-group} and \eqref{eq.my-symm} is isomorphic with the symmetric $L(G)$-bimodule associated, as in Remark \ref{rem.bogol-crossed}, with the cyclic orthogonal representation of $G$ with spectral measure $\nu$. In particular, as in Remark \ref{rem.bogol-crossed}, the von Neumann algebras $M=\Gamma(H_\nu,J_\nu,L^\infty(K),\mu)\dpr$ can also be realized as a free Bogoljubov crossed product by the countable abelian group $G$. In this way, Proposition \ref{prop.char} generalizes the results of \cite{HS09,Ho12a}. Note however that for a free Bogoljubov crossed product $M = \Gamma(K_\R)\dpr \rtimes G$ with $G$ abelian, the subalgebra $L(G) \subset M$ is \emph{never} an $s$-MASA. So our more general construction is essential to prove Theorem \ref{thm.main-B}.

For non abelian compact groups $K$, we can still view $K = \widehat{G}$, but $G$ is no longer a countable group, rather a discrete Kac algebra. It is then still possible to identify the II$_1$ factors $M$ in Proposition \ref{prop.char} with a crossed product $\Gamma(K_\R)\dpr \rtimes G$, where the discrete Kac algebra action of $G$ on $\Gamma(K_\R)\dpr$ is the free Bogoljubov action associated in \cite{Va02} with an orthogonal corepresentation of the quantum group $G$.
\end{remark}

The main result of this section says that in certain sufficiently non abelian compact groups $K$, one can find ``large'' free subsets $F \subset K$, where ``large'' means that $F$ carries a non atomic probability measure that is $c_0$. We conjecture that the compact Lie groups $\SO(n)$, $n \geq 3$, admit free subsets carrying a $c_0$ probability measure. For our purposes, it is however sufficient to prove that these exist in more ad hoc groups.

For every prime number $p$, denote by $\Gamma_p$ the finite group $\Gamma_p = \PGL_2(\Z/p\Z)$. The following is the main result of this section. Recall that the support of a probability measure $\nu$ on a compact space $K$ is defined as the smallest closed subset $S \subset K$ with $\nu(S) = 1$.

\begin{theorem}\label{thm.free-c0}
There exists a sequence of prime numbers $p_n$ tending to infinity, a closed free subset $F \subset K := \prod_{n=1}^\infty \Gamma_{p_n}$ topologically generating $K$ and a symmetric, non atomic, $c_0$ probability measure $\nu$ on $K$ whose support equals $F \cup F^{-1}$.
\end{theorem}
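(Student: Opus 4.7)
The plan is to construct $F$ as a Cantor set inside $K$ parametrized by an infinite rooted tree whose depth-$n$ labels live in $\Gamma_{p_n}$, and to let $\nu$ be the symmetrization of the pushforward of a Bernoulli measure on the tree. The twin input from \cite{AR92, GHSSV07}---that random Cayley graphs of $\Gamma_p = \PGL_2(\Z/p\Z)$ enjoy both large girth \emph{and} a strong spectral gap---will yield freeness of $F$ and $c_0$-decay of $\nu$ respectively. Concretely, I would choose primes $p_n \to \infty$ and integers $k_n$ (with $p_n$ growing very fast relative to $\prod_{i \leq n} k_i$), together with random symmetric subsets $S_n = T_n \sqcup T_n^{-1} \subset \Gamma_{p_n}$ with $|T_n| = k_n$, $T_n \cap T_n^{-1} = \emptyset$, no element of $T_n$ of order $2$, and $S_n$ generating $\Gamma_{p_n}$, so that by \cite{GHSSV07} the Cayley graph of $\Gamma_{p_n}$ on $S_n$ has girth $g_n \to \infty$ while by \cite{AR92} the normalized convolution by $S_n$ acts on $L^2_0(\Gamma_{p_n})$ with norm $c_n \to 0$.

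Next I would take a rooted tree with branching $k_n$ at depth $n$, that is $\mathcal{T}_n := \prod_{i \leq n}\{1,\ldots,k_i\}$ (ensuring $|\mathcal{T}_n| \leq |\Gamma_{p_n}|$ by letting $p_n$ grow fast enough), pick injective maps $\Phi_n \colon \mathcal{T}_n \to T_n$, set $\Omega = \prod_n \{1,\ldots,k_n\}$ with the Bernoulli product probability measure $\mu_B$, define $\Phi \colon \Omega \to K$ by $\Phi(\omega)(n) = \Phi_n(\omega|_n)$, and let $F := \Phi(\Omega)$. Then $\Phi$ is a continuous injection of the Cantor space $\Omega$ onto $F$. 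For freeness, any reduced word $w = x_{\omega^{(1)}}^{\eps_1}\cdots x_{\omega^{(m)}}^{\eps_m}$ in $F$ is to be tested at a coordinate $n^*$ chosen large enough that the prefixes $\omega^{(i)}|_{n^*}$ are all distinct and $g_{n^*} \geq m$: then $w(n^*)$ is a length-$m$ reduced word in the alphabet $S_{n^*}$ (the reduction condition on $w$ together with injectivity of $\Phi_{n^*}$ forbids adjacent cancellations), hence $w(n^*) \neq e$ in $\Gamma_{p_{n^*}}$ by the girth bound. Topological generation of $K$ by $F$ will follow from the fact that the coordinate-$n$ projections of $F$ and $F^{-1}$ together contain $S_n$ (which generates $\Gamma_{p_n}$), combined with a Goursat-type argument based on the simplicity of $\PSL_2(\Z/p_n\Z)$ (for $p_n \geq 5$) to exclude proper subdirect subgroups.

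Defining $\nu := \tfrac{1}{2}(\Phi_*\mu_B + \iota_*\Phi_*\mu_B)$ produces a symmetric non-atomic probability measure with support $F \cup F^{-1}$; disjointness $F \cap F^{-1} = \emptyset$ comes from freeness (no element of order $2$ exists in $F$, and for $x \neq x^{-1}$ both in $F$ the relation $x \cdot x^{-1} = e$ would be a nontrivial length-$2$ reduced word). For the $c_0$ property, given $\pi = \pi_{n_1}\otimes\cdots\otimes\pi_{n_k} \in \hat K \setminus \{1\}$ supported at levels $n_1 < \cdots < n_k$, iterated conditional expectations down the tree (averaging out $\omega_{n_{k-1}+1},\ldots,\omega_{n_k}$ first, noting that the outer tensor factors are unitary) yield
\[
\|\pi(\Phi_*\mu_B)\| \;\leq\; \max_{s' \in \mathcal{T}_{n_{k-1}}}\Bigl\| \tfrac{1}{\prod_{n_{k-1} < i \leq n_k} k_i}\,\sum_{t} \pi_{n_k}(\Phi_{n_k}(s't))\Bigr\|.
\]
Arranging by a matrix-concentration argument (Hoeffding/Freedman for sums of independent random unitaries) applied to the random $\Phi_n$, combined with a union bound over prefixes $s'$ and over non-trivial $\pi_n \in \widehat{\Gamma_{p_n}}$, that these conditional averages tend to $0$ uniformly, one concludes $\|\pi(\nu)\| \leq \|\pi(\Phi_*\mu_B)\| \to 0$ as $\pi \to \infty$ in $\hat K$, since going to infinity in $\hat K$ forces $n_k \to \infty$.

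The main difficulty lies in calibrating the sizes $k_n$: large $k_n$ helps the Ramanujan-type spectral bound and the matrix-concentration step, but erodes the girth (which prefers small generating sets); symmetrically, making the map $\Phi_n$ sufficiently \emph{pseudorandom at every level} (so the conditional averages, not just the full-set average, are small) is the quantitative refinement of the plain spectral-gap statement. This is exactly where the combined spectral-gap-and-girth guarantee of \cite{AR92, GHSSV07} for random Cayley graphs of $\PGL_2(\Z/p\Z)$ is used, together with a careful tuning of the sequences $p_n$, $k_n$ and the random $\Phi_n$ to make both the girth and the uniform conditional spectral gap effective.
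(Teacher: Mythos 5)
Your architecture is genuinely different from the paper's, even though it draws on the same two random-Cayley-graph inputs for $\PGL_2(\Z/p\Z)$ (spectral gap via Alon--Roichman/matrix concentration, and girth): you build $F$ as a Cantor set labelled by a rooted tree with random labels and take $\nu$ as a symmetrized Bernoulli pushforward, whereas the paper builds $F$ as an inverse limit of finite sets $F_n \subset K_n = \prod_{j\le n}\Gamma_{p_j}$ in which the fibre of $F_n$ over each point $g_i$ of $F_{n-1}$ is a \emph{conjugate of one fixed spectral-gap set}, namely $\{h_i h_j h_i^{-1} : j\ne i\}$. That conjugation trick is exactly what spares the paper the step you yourself flag as the main difficulty: every conditional (fibrewise) average is unitarily equivalent to essentially the same operator $\pi_2(F)$, so a single application of the plain spectral-gap statement controls all conditional averages at once, with no uniformity over prefixes and no union bound. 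Your route instead needs the strictly stronger ``uniform conditional gap over all prefixes''; the concentration-plus-union-bound plan is quantitatively viable (the number of prefixes at stage $n$ is bounded independently of $p_n$, so $k_n\sim(\log p_n)^2$ with $p_n$ growing fast works, the same regime as Lemmas \ref{lem.gap} and \ref{lem.girth}), but it is only sketched, and there is a cardinality slip to repair: $\Phi_n:\mathcal{T}_n\to T_n$ cannot be injective if $|T_n|=k_n$; you need $|T_n|=\prod_{i\le n}k_i$, which is still fine for the girth estimate but changes the bookkeeping.

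The step that is actually wrong as stated is topological generation. $\PGL_2(\Z/p\Z)$ is \emph{not} simple: its normal subgroups are trivial, $\PSL_2(\Z/p\Z)$ and everything, so it has a $\Z/2$ quotient, and $\prod_n\Gamma_{p_n}$ has proper closed subgroups that surject onto every single factor (for instance the index-two subgroup of elements whose first two ``sign'' characters agree). Hence ``the coordinate projections of $F\cup F^{-1}$ generate each factor, plus Goursat and simplicity of $\PSL_2$'' does not exclude proper subdirect subgroups. You must in addition rule out the common $\Z/2$-quotient obstructions at every stage, i.e.\ show that no homomorphism $K_{n-1}\to\Z/2$ predicts the sign of the $n$-th coordinate on all of $\theta_n(F)$ (together with surjectivity of $\langle\theta_n(F)\rangle$ onto $K_{n-1}$ by induction). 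In your random model this holds with high probability, since the signs of the fresh labels behave like independent fair coins, but it needs to be stated and proved; the paper never faces this issue because its fibre over a fixed $g_i$ contains all differences $h_i h_s h_t^{-1} h_i^{-1}$, which generate $\Gamma_{p_n}$, so $1\times\Gamma_{p_n}\subset\langle\theta_n(F)\rangle$ outright and $\theta_n(\langle F\rangle)=K_n$ follows by induction.
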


We then immediately get:

\begin{proof}[Proof of Theorem \ref{thm.main-B}]
Take $K$ and $\nu$ as in Theorem \ref{thm.free-c0}. Denote by $M$ the associated von Neumann algebra with abelian subalgebra $A \subset M$ as in Proposition \ref{prop.char}. By Proposition \ref{prop.char}, we get that $M$ is a nonamenable, strongly solid II$_1$ factor and that $A \subset M$ is an $s$-MASA.
\end{proof}

Before proving Theorem \ref{thm.free-c0}, we need some preparation.

The Alon-Roichman theorem \cite{AR92} asserts that the Cayley graph given by a random and independent choice of $k \geq c(\eps) \, \log |G|$ elements in a finite group $G$ has expected second eigenvalue at most $\eps$, with the normalization chosen so that the largest eigenvalue is $1$. In \cite[Theorem~2]{LR04}, a simple proof of that result was given. The same proofs yields the following result. For completeness, we provide the argument.

Whenever $G$ is a group, $\pi : G \recht \cU(H)$ is a unitary representation and $g_1,\ldots,g_k \in G$, we write
\begin{equation}\label{eq.mean}
\pi(g_1,\ldots,g_k) := \frac{1}{k} \sum_{j=1}^k \pi(g_j) \; .
\end{equation}

\begin{lemma}[{\cite{LR04}}]\label{lem.gap}
Let $G_n$ be a sequence of finite groups and $k_n$ a sequence of positive integers such that $k_n / \log |G_n| \recht \infty$. For every $\eps > 0$ and for a uniform and independent choice of $k_n$ elements $g_1,\ldots,g_{k_n} \in G_n$, we have that
$$
\lim_{n \recht \infty} \rP\Bigl( \; \|\pi(g_1,\ldots,g_{k_n})\| \leq \eps \;\; \text{for all}\;\; \pi \in \Irr(G_n) \setminus \{\counit\} \; \Bigr) = 1 \; .
$$
\end{lemma}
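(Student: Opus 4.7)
The strategy is to control each non-trivial irreducible representation separately by a matrix concentration estimate, and then take a union bound over $\Irr(G_n) \setminus \{\counit\}$. The hypothesis $k_n/\log|G_n| \recht \infty$ is exactly what ensures the union bound wins against the exponential concentration.

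Fix $\eps > 0$, a positive integer $n$, and a non-trivial $\pi \in \Irr(G_n)$ acting on a Hilbert space $H_\pi$ of dimension $d_\pi$. For independent, uniformly distributed $g_1,\ldots,g_{k_n} \in G_n$, set $X_j := \pi(g_j) \in \cU(H_\pi)$. Each $X_j$ is unitary, so $\|X_j\| \leq 1$, and since $\pi$ is non-trivial and irreducible, orthogonality of matrix coefficients gives $\mathbb{E}[X_j] = |G_n|^{-1} \sum_{g \in G_n} \pi(g) = 0$. Thus $\pi(g_1,\ldots,g_{k_n}) = k_n^{-1} \sum_j X_j$ is an average of i.i.d., mean-zero, uniformly bounded random matrices, and a matrix Chernoff/Bernstein inequality yields an absolute constant $c>0$ with
$$\rP\bigl(\|\pi(g_1,\ldots,g_{k_n})\| > \eps\bigr) \;\leq\; 2 d_\pi \exp(-c \eps^2 k_n).$$
This is the only substantial analytic ingredient, and is where I expect the main obstacle to lie. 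One can either cite the Ahlswede--Winter / Tropp matrix Bernstein inequality directly, or, in the more elementary trace-moment style of \cite{LR04}, bound $\mathbb{E}[\Tr((S^*S)^m)]$ combinatorially for $S = \sum_j X_j$ (using that cross-products involve averages of $\pi(g)$ and therefore vanish) and then apply Markov's inequality with $m$ of order $\eps^2 k_n$.

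For the union bound, recall that $\sum_{\pi \in \Irr(G_n)} d_\pi^2 = |G_n|$, which forces $d_\pi \leq |G_n|^{1/2}$ for every $\pi$, while $|\Irr(G_n)| \leq |G_n|$; hence $\sum_{\pi \neq \counit} d_\pi \leq |G_n|^{3/2}$. Summing the previous estimate over $\pi \in \Irr(G_n) \setminus \{\counit\}$ yields
$$\rP\bigl(\exists\, \pi \neq \counit : \|\pi(g_1,\ldots,g_{k_n})\| > \eps\bigr) \;\leq\; 2|G_n|^{3/2} \exp(-c \eps^2 k_n),$$
and the right-hand side tends to $0$ as $n \recht \infty$, precisely because $k_n/\log|G_n| \recht \infty$ makes the exponent $\tfrac{3}{2}\log|G_n| - c\eps^2 k_n$ tend to $-\infty$. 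This completes the proof.
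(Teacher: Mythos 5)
Your proposal is correct and follows essentially the same route as the paper: the key ingredient in both is an Ahlswede--Winter type matrix concentration bound with a dimension prefactor that is only polynomial in $|G_n|$, so that $k_n/\log|G_n| \recht \infty$ forces the failure probability to zero. The only cosmetic differences are that the paper applies the concentration inequality once to the regular representation restricted to $\ell^2(G_n)\ominus\C 1$ (which packages all nontrivial irreducibles with their multiplicities, replacing your union bound over $\Irr(G_n)$), and that it makes explicit the reduction to self-adjoint matrices by splitting into real and imaginary parts, a step you leave implicit in citing matrix Bernstein.
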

\begin{proof}
Fix a finite group $G$ and a positive integer $k$. Let $g_1,\ldots,g_k$ be a uniform and independent choice of elements of $G$. Denote by $\lambda_0 : G \recht \cU(\ell^2(G) \ominus \C 1)$ the regular representation restricted to $\ell^2(G) \ominus \C 1$. Put $d = |G| - 1$. Both
$$T(g_1,\ldots,g_k) = \frac{1}{k} \sum_{j=1}^k \frac{\lambda_0(g_j) + \lambda_0(g_j)^*}{2} \quad\text{and}\quad S(g_1,\ldots,g_k) = \frac{1}{k} \sum_{j=1}^k \frac{i \lambda_0(g_j) - i\lambda_0(g_j)^*}{2}$$
are sums of $k$ independent self-adjoint $d \times d$ matrices of norm at most $1$ and having expectation $0$. We apply \cite[Theorem 19]{AW01} to the independent random variables
$$X_j = \frac{2+\lambda_0(g_j)+\lambda_0(g_j)^*}{4} \; ,$$
satisfying $0 \leq X_j \leq 1$ and having expectation $1/2$. We conclude that for every $0 \leq \eps \leq 1/2$,
$$\rP\bigl( \, \|T(g_1,\ldots,g_k)\| \leq \eps \, \bigr) = \rP\Bigl( \, (1-\eps) \frac{1}{2} \leq \frac{1}{k} \sum_{j=1}^k X_j \leq (1+\eps) \frac{1}{2} \, \Bigr) \geq 1 - 2 d \, \exp\bigl(- k \frac{\eps^2}{4 \log 2}\bigr) \; .$$
The same estimate holds for $S(g_1,\ldots,g_k)$. Since $\lambda_0(g_1,\ldots,g_k) = T(g_1,\ldots,g_k) - i S(g_1,\ldots,g_k)$ and since $\lambda_0$ is the direct sum of all nontrivial irreducible representations of $G$ (all appearing with multiplicity equal to their dimension), we conclude that
$$\rP\bigl( \, \|\pi(g_1,\ldots,g_k) \| \leq \eps \;\; \text{for all}\;\; \pi \in \Irr(G) \setminus \{\counit\} \, \bigr) \geq 1 - 4 |G| \, \exp\bigl(- k \frac{\eps^2}{16 \log 2}\bigr) \; .$$
Taking $G = G_n$, $k=k_n$ and $n \recht \infty$, our assumption that $k_n / \log |G_n| \recht \infty$ implies that for every fixed $\eps > 0$,
$$|G_n| \, \exp\bigl(- k_n \frac{\eps^2}{16 \log 2}\bigr) \recht 0$$
and thus the lemma follows.
\end{proof}

On the other hand in \cite{GHSSV07}, it is proven that random Cayley graphs of the groups $\PGL_2(\Z/p\Z)$ have large girth. More precisely, we say that elements $g_1,\ldots,g_k$ in a group $G$ satisfy no relation of length $\leq \ell$ if every nontrivial reduced word of length at most $\ell$ with letters from $g_1^{\pm 1},\ldots,g_k^{\pm 1}$ defines a nontrivial element in $G$.

The estimates in the proof of \cite[Lemma~10]{GHSSV07} give the following result. Again for completeness, we provide the argument.

\begin{lemma}[{\cite{GHSSV07}}]\label{lem.girth}
Let $p_n$ be a sequence of prime numbers tending to infinity and let $k_n$ be a sequence of positive integers such that $\log k_n / \log p_n \recht 0$. Put $\Gamma_{p_n} = \PGL_2(\Z/p_n \Z)$. For every $\ell > 0$ and for a uniform and independent choice of $k_n$ elements $g_1,\ldots,g_{k_n} \in \Gamma_{p_n}$, we have that
$$\lim_{n \recht \infty} \rP\Bigl( \; g_1,\ldots,g_{k_n} \;\;\text{satisfy no relation of length $\leq \ell$} \; \Bigr) = 1 \; .$$
\end{lemma}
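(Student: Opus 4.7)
The plan is a first-moment/union-bound argument over the nontrivial reduced words of length at most $\ell$ in the free group on $k_n$ letters $g_1,\ldots,g_{k_n}$. First I would count: the number of reduced words of length $m$ on $k_n$ generators is $2k_n(2k_n-1)^{m-1}$, so the number of nontrivial reduced words of length at most $\ell$ is at most $\ell(2k_n)^\ell$. It therefore suffices to prove the per-word bound
$$\rP\bigl( w(g_1,\ldots,g_{k_n}) = e \bigr) \leq C_\ell / p$$
for every fixed nontrivial reduced word $w$ of length $m \leq \ell$ and every prime $p$, where $g_1,\ldots,g_{k_n}$ are independent uniform elements of $\Gamma_p = \PGL_2(\Z/p\Z)$ and $C_\ell$ depends only on $\ell$. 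Indeed, combining this with the word count and applying the union bound yields
$$\rP(\text{some relation of length} \leq \ell \text{ holds}) \; \leq \; C_\ell \, \ell \, (2k_n)^\ell \, / \, p_n \; ,$$
and the hypothesis $\log k_n / \log p_n \recht 0$ gives $(2k_n)^\ell = p_n^{o(1)}$, so this bound tends to $0$.

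To establish the per-word estimate, observe that $w$ uses at most $j \leq m$ of the letters, say $x_1,\ldots,x_j$, so only the $g_i$'s corresponding to these letters matter. Lifting $g_i$ to $\GL_2(\F_p)$, the equation $w(g_1,\ldots,g_{k_n}) = e$ in $\PGL_2(\F_p)$ is equivalent to the statement that the polynomial word map $\widetilde w : \GL_2^j \recht \GL_2$ lands in the center $Z(\GL_2)$ when evaluated at the chosen lifts. The crucial point is that $\widetilde w$ is not identically scalar on $\GL_2^j$: the free group $F_j$ embeds in $\SL_2(\Z)$ (e.g.\ by the classical ping-pong subgroup generated by $\bigl(\begin{smallmatrix}1&2\\0&1\end{smallmatrix}\bigr)$ and $\bigl(\begin{smallmatrix}1&0\\2&1\end{smallmatrix}\bigr)$), and any nontrivial element of the resulting free subgroup is a non-scalar matrix. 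Hence $V := \{(x_1,\ldots,x_j) \in \GL_2^j : \widetilde w \in Z(\GL_2)\}$ is a proper affine subvariety of $\GL_2^j$ cut out by polynomials of degree bounded in terms of $\ell$ only. By the Lang--Weil estimate one has $|V(\F_p)| \leq C_\ell \, p^{4j-1}$, and dividing by the $(p-1)^j$ scalar ambiguities in the lift to $\PGL_2^j$ gives at most $C_\ell\, p^{3j-1}$ solutions in $\PGL_2(\F_p)^j$, out of $|\PGL_2(\F_p)|^j \asymp p^{3j}$ total tuples. This yields $\rP(w = e) \leq C_\ell/p$ as required.

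The main obstacle is the nontriviality and codimension control in the second step. The identity-free nature of word maps in $\PGL_2$ is classical but deserves care; an elementary alternative to Lang--Weil is to condition on all but one of the $j$ variables used by $w$ and to bound, for each choice of the conditioned variables, the number of roots of the resulting matrix polynomial equation of bounded degree in $\PGL_2(\F_p)$. That equation has at most $O_\ell(p^2)$ solutions whenever it is not identically zero, giving the $O(1/p)$ bound; the bad set of conditioned tuples on which the equation degenerates to the identity is itself cut out by a nontrivial polynomial identity, and can be handled inductively on $j$. Either route gives the per-word bound, and the union bound in Step~1 then completes the proof.
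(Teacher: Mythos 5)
Your proof is correct and essentially coincides with the paper's: the same union bound over the at most $\ell(2k_n)^\ell$ reduced words, and the same per-word estimate of order $1/p$ obtained by lifting to $\operatorname{GL}_2(\F_p)$, writing ``$w(g)$ is scalar'' as polynomial equations of degree at most $\ell$, checking that they are not identically zero because free groups embed in $\operatorname{SL}_2(\Z)$, and counting points on the zero set of a nonzero bounded-degree polynomial (the paper uses the elementary $\ell p^{4k-1}$ bound in all $4k$ matrix variables and controls the resulting $(1+\tfrac{1}{p-1})^{3k}$ factor at the end, whereas you restrict to the $j\le\ell$ letters actually occurring in $w$ and invoke Lang--Weil; both are harmless variations). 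One small imprecision: your per-word bound cannot hold for literally every prime $p$ as stated, since for finitely many small $p$ a word of length $\le\ell$ may be a law of $\PGL_2(\Z/p\Z)$ (the mod-$p$ reductions of your defining polynomials could vanish identically even though they are nonzero over $\Z$); since $p_n\recht\infty$ this is immaterial -- or can be absorbed into $C_\ell$ -- and the paper deals with it by first observing that $\Gamma_p$ satisfies no law of length $\le\ell$ once $p$ is large enough.
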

\begin{proof}
Let $G$ be a group. A \emph{law} of length $\ell$ in $G$ is a nontrivial element $w$ in a free group $\F_n$ such that $w$ has length $\ell$ and $w(g_1,\ldots,g_n) = e$ for all $g_1,\ldots,g_n \in G$. For example, if $G$ is abelian, the element $w = a b a^{-1} b^{-1}$ of $\F_2$ defines a law of length $4$ in $G$. Since the labeling of the generators does not matter, any law of length $\ell$ can be defined by a nontrivial element of $\F_n$ with $n \leq \ell$. In particular, there are only finitely many possible laws of a certain length $\ell$.

Since $\F_\infty \hookrightarrow \F_2 \hookrightarrow \PSL_2(\Z)$, the group $\PSL_2(\Z)$ satisfies no law. For every prime number $p$, write $\Gamma_p = \PGL_2(\Z/p\Z)$. Using the quotient maps $\PSL_2(\Z) \recht \PSL_2(\Z/p\Z)$, we get that a given nontrivial element $w \in \F_n$ can be a law for at most finitely many $\Gamma_p$. So, for every $\ell > 0$, we get that $\Gamma_p$ satisfies no law of length $\leq \ell$ for all large enough primes $p$. (Note that \cite[Proposition 11]{GHSSV07} provides a much more precise result.)

Let $w = g_{i_1}^{\eps_1} \cdots g_{i_\ell}^{\eps_\ell}$ with $i_j \in \{1,\ldots,k\}$ and $\eps_j \in \{\pm 1\}$ be a reduced word of length $\ell$ in $g_1^{\pm 1},\ldots,g_k^{\pm 1}$. Let $p$ be a prime number and assume that $w$ is not a law of $\Gamma_p$. With the same argument as in the proof of \cite[Lemma~10]{GHSSV07}, we now prove that for a uniform and independent choice of $g_1,\ldots,g_k \in \Gamma_p$, we have that
\begin{equation}\label{eq.est-l}
\rP\bigl( \, w(g_1,\ldots,g_k) = e \;\text{in}\; \Gamma_p \, \bigr) \leq \frac{\ell}{p} \, \bigl(1+\frac{1}{p-1}\bigr)^{3k} \; .
\end{equation}
Denote $F_p = \Z/p\Z$, not to be confused with the free group $\F_p$. Write $G_p = GL_2(F_p) \subset F_p^{2 \times 2}$. Define the map
$$W : \bigl(F_p^{2\times 2}\bigr)^k \recht F_p^{2\times 2} : W(a_1,\ldots,a_k) = b_{i_1} \cdots b_{i_\ell}$$
where $b_{i_j} = a_{i_j}$ when $\eps_j = 1$ and $b_{i_j}$ equals the adjunct matrix of $a_{i_j}$ when $\eps_j = -1$. Note that the four components $W_{st}$, $s,t \in \{1,2\}$, of the map $W$ are polynomials of degree at most $\ell$ in the $4k$ variables $a \in \bigl(F_p^{2\times 2}\bigr)^k$. Define the subset $\cW \subset \bigl(F_p^{2\times 2}\bigr)^k$ given by
\begin{align*}
\cW &= \bigl\{ a \in \bigl(F_p^{2\times 2}\bigr)^k \bigm| W(a) \;\text{is a multiple of the identity matrix}\,\bigr\} \\
 &= \bigl\{ a \in \bigl(F_p^{2\times 2}\bigr)^k \bigm| W_{11}(a) - W_{22}(a) = W_{12}(a) = W_{21}(a) = 0 \bigr\} \; .
\end{align*}
We also define $\cV = \cW \cap (G_p)^k$ and
$$\cU = \{ g \in (\Gamma_p)^k \mid w(g_1,\ldots,g_k) = e \;\text{in}\; \Gamma_p \} \; .$$
The quotient map $G_p \recht \Gamma_p$ induces the $(p-1)^k$-fold covering $\pi : \cV \recht \cU$.

The subset $\cW \subset F_p^{4k}$ is the solution set of a system of three polynomial equations of degree at most $\ell$. If each of these polynomials is identically zero, we get that $\cW = F_p^{4k}$ and thus $\cU = (\Gamma_p)^k$. This means that $w$ is a law of $\Gamma_p$, which we supposed not to be the case. So at least one of the polynomials is not identically zero. The number of zeros of such a polynomial is bounded above by $\ell p^{4k-1}$ (and a better, even optimal, bound can be found in \cite{Se89}). So, $|\cW| \leq \ell p^{4k-1}$. Then also $|\cV| \leq \ell p^{4k-1}$ and because $\pi$ is a $(p-1)^k$-fold covering, we find that
$$|\cU| \leq \ell \, (p-1)^{-k} \, p^{4k-1} \; .$$
Since $|\Gamma_p| = (p-1) \, p \, (p+1)$, we conclude that
$$\rP\bigl( \, w(g_1,\ldots,g_k) = e \;\text{in}\; \Gamma_p \, \bigr) = \frac{|\cU|}{|\Gamma_p|^k} \leq \frac{\ell}{p} \, (p-1)^{-2k} \, (p+1)^{-k} \, p^{3k} \leq \frac{\ell}{p} \, \bigl(1+\frac{1}{p-1}\bigr)^{3k} \; .$$
So, \eqref{eq.est-l} holds.

Now assume that $p_n$ is a sequence of prime numbers and $k_n$ are positive integers such that $p_n \recht \infty$ and $\log k_n / \log p_n \recht 0$. For all $n$ large enough, $3k_n \leq p_n-1$ and for all $n$ large enough, as we explained in the beginning of the proof, $\Gamma_{p_n}$ has no law of length $\leq \ell$. Since $(1+1/x)^x < 3$ for all $x > 0$ and since there are less than $(2k)^{\ell +1}$ reduced words of length $\leq l$ in $g_1^{\pm 1},\ldots,g_k^{\pm 1}$, we find that for all $n$ large enough and a uniform, independent choice of $g_1,\ldots,g_{k_n} \in \Gamma_{p_n}$, we have
$$\rP\bigl( \; g_1,\ldots,g_{k_n} \;\;\text{satisfy a relation of length $\leq \ell$ in $\Gamma_{p_n}$} \; \Bigr) \leq (2k_n)^{\ell + 1} \; \frac{3 \ell}{p_n} \; .$$
By our assumption that $\log k_n / \log p_n \recht 0$, the right hand side tends to $0$ as $n \recht \infty$ and the lemma is proved.
\end{proof}

Combining Lemmas \ref{lem.gap} and \ref{lem.girth}, we obtain the following.

\begin{lemma} \label{lem.PGL}
For all $\eps > 0$ and all $k_0,p_0,\ell \in \N$, there exist a prime number $p \geq p_0$, an integer $k \geq k_0$ and elements $g_1,\ldots,g_k \in \Gamma_p = \PGL_2(\Z/p\Z)$ generating the group $\Gamma_p$ such that
\begin{enumlist}
\item $\|\pi(g_1,\ldots,g_k)\| \leq \eps$ for every nontrivial irreducible representation $\pi \in \Irr(\Gamma_p)$,
\item $g_1,\ldots,g_k$ satisfy no relation of length $\leq \ell$.
\end{enumlist}
\end{lemma}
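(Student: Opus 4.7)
The plan is to combine Lemmas \ref{lem.gap} and \ref{lem.girth} by choosing a sequence $(p_n, k_n)$ that simultaneously satisfies the hypotheses of both, and then to argue that the spectral gap condition already forces generation of $\Gamma_{p_n}$. Without loss of generality we may assume $\eps < 1$, since otherwise the statement is trivial.

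First I would fix a sequence of prime numbers $p_n \to \infty$ with $p_n \geq p_0$ and choose
$$k_n = \max\bigl(k_0 , \lceil (\log p_n)^2 \rceil\bigr) \; .$$
Since $|\Gamma_{p_n}| = (p_n-1) p_n (p_n+1)$, we have $\log |\Gamma_{p_n}| = O(\log p_n)$, so
$$\frac{k_n}{\log |\Gamma_{p_n}|} \longrightarrow \infty \quad\text{and}\quad \frac{\log k_n}{\log p_n} \longrightarrow 0 \; .$$
So the hypotheses of both Lemma \ref{lem.gap} and Lemma \ref{lem.girth} are met. Applying both lemmas with the constant $\eps/2$ (or any fixed number strictly less than $1$) in place of $\eps$, we get that for a uniform, independent choice of $g_1, \ldots, g_{k_n} \in \Gamma_{p_n}$, both events
$$E_n^{(1)} := \bigl\{\, \|\pi(g_1,\ldots,g_{k_n})\| \leq \eps/2 \;\text{for every}\; \pi \in \Irr(\Gamma_{p_n}) \setminus \{\counit\} \,\bigr\}$$
and
$$E_n^{(2)} := \bigl\{\, g_1,\ldots,g_{k_n} \;\text{satisfy no relation of length}\; \leq \ell \,\bigr\}$$
have probability tending to $1$. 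Therefore $\rP(E_n^{(1)} \cap E_n^{(2)}) > 0$ for all large $n$, and a concrete choice of $g_1, \ldots, g_{k_n}$ satisfying both properties exists; fix such $n$ (with $n$ also large enough that $p_n \geq p_0$ and $k_n \geq k_0$).

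Finally I would verify that property (1) forces $g_1, \ldots, g_{k_n}$ to generate $\Gamma_{p_n}$. Suppose, for contradiction, that $H := \langle g_1, \ldots, g_{k_n}\rangle$ is a proper subgroup of $\Gamma_{p_n}$. Consider the quasi-regular representation $\sigma$ of $\Gamma_{p_n}$ on $\ell^2(\Gamma_{p_n}/H)$. Since each $g_i$ lies in $H$, the Dirac vector $\delta_H$ is fixed by every $\sigma(g_i)$, so writing $\delta_H = c \mathbf{1} + \xi$ with $\mathbf{1}$ the constant function and $\xi \perp \mathbf{1}$, both $\mathbf{1}$ and $\xi$ are $\sigma(g_i)$-invariant. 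Because $H$ is proper, $\xi \neq 0$. The orthogonal complement of $\mathbf{1}$ in $\ell^2(\Gamma_{p_n}/H)$ decomposes into nontrivial irreducible representations of $\Gamma_{p_n}$, so there exists $\pi \in \Irr(\Gamma_{p_n}) \setminus \{\counit\}$ admitting a nonzero vector fixed by all $g_i$. For such $\pi$ we get $\|\pi(g_1,\ldots,g_{k_n})\| = 1 > \eps/2$, contradicting property (1). Hence $H = \Gamma_{p_n}$, and setting $p := p_n$, $k := k_n$ completes the proof.

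The only real subtlety is the simultaneous calibration of $k_n$ so that the two lemmas apply at once; this is easy because the windows $k_n \gg \log p_n$ and $\log k_n \ll \log p_n$ overlap on a wide range (e.g.\ any polylogarithmic growth of $k_n$). The generation argument is then a standard consequence of spectral gap.
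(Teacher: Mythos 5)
Your proposal is correct and follows essentially the same route as the paper: choose $k_n \sim (\log p_n)^2$ so that Lemmas \ref{lem.gap} and \ref{lem.girth} apply simultaneously, intersect the two high-probability events for a large $n$ with $p_n \geq p_0$ and $k_n \geq k_0$, and deduce generation from the spectral bound once the constant is $<1$. The only cosmetic difference is that you obtain generation via the quasi-regular representation on $\Gamma_p/H$ for a hypothetical proper subgroup $H$, whereas the paper notes directly that the bound on the regular representation restricted to $\ell^2(\Gamma_p)\ominus\C 1$ excludes nonzero invariant vectors there; these are the same standard spectral-gap argument.
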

\begin{proof}
Choose any sequence of prime numbers $p_n$ tending to infinity. Define $k_n = \lfloor(\log p_n)^2\rfloor$. Since $|\Gamma_{p_n}| = (p_n-1) \, p_n \, (p_n + 1)$, we get that $k_n / \log |\Gamma_{p_n}| \recht \infty$. Also, $\log k_n / \log p_n \recht 0$. So Lemmas \ref{lem.gap} and \ref{lem.girth} apply and for a large enough choice of $n$, properties 1 and 2 in the lemma hold for $p=p_n$, $k=k_n$ and a large portion of the $k_n$-tuples $(g_1,\ldots,g_{k_n}) \in \Gamma_{p_n}^{k_n}$.

The first property in the lemma is equivalent with
$$\Bigl\| \Bigl(\frac{1}{k} \sum_{j=1}^k \lambda(g_j)\Bigr)_{\ell^2(\Gamma_p) \ominus \C 1} \Bigr\| \leq \eps \; ,$$
where $\lambda : \Gamma_p \recht \ell^2(\Gamma_p)$ is the regular representation. If $\eps < 1$, it then follows in particular that there are no non zero functions in $\ell^2(\Gamma_p) \ominus \C 1$ that are invariant under all $\lambda(g_j)$, meaning that every element of $\Gamma_p$ can be written as a product of elements in $\{g_1,\ldots,g_k\}$. So, we get that $g_1,\ldots,g_k$ generate $\Gamma_p$.
\end{proof}

Having proven Lemma \ref{lem.PGL}, we are now ready to prove Theorem \ref{thm.free-c0}.

\begin{proof}[Proof of Theorem \ref{thm.free-c0}]
As in \eqref{eq.mean}, for every finite group $G$, subset $F \subset G$ and unitary representation $\pi : G \recht \cU(H)$, we write
$$\pi(F) := \frac{1}{|F|} \sum_{g \in F} \pi(g) \; .$$

For every prime number $p$, we write $\Gamma_p = \PGL_2(\Z/p\Z)$. We construct by induction on $n$ a sequence of prime numbers $p_n$ and a generating set
$$F_n \subset K_n := \prod_{j=1}^n \Gamma_{p_j}$$
such that, denoting by $\theta_{n-1} : K_n \recht K_{n-1}$ to projection onto the first $n-1$ coordinates, the following properties hold.
\begin{enumlist}
\item $\theta_{n-1}(F_n) = F_{n-1}$ and the map $\theta_{n-1} : F_n \recht F_{n-1}$ is an $r_n$-fold covering with $r_n \geq 2$.
\item If $\pi \in \Irr(K_n)$ and $\pi$ does not factor through $\theta_{n-1}$, then $\|\pi(F_n)\| \leq 1/n$.
\item The elements of $F_n$ satisfy no relation of length $\leq n$.
\end{enumlist}

Assume that $p_1,\ldots,p_{n-1}$ and $F_1,\ldots,F_{n-1}$ have been constructed. We have to construct $p_n$ and $F_n$. Write $k_1 = |F_{n-1}|$ and put $k_0 = \max \{ 2n+1, k_1 \}$. By Lemma \ref{lem.PGL}, we can choose $k_2 > k_0$, a prime number $p_n$ and a subset $F \subset \Gamma_{p_n}$ with $|F| = k_2$ such that the elements of $F$ satisfy no relation of length $\leq 3n$ and such that $\|\pi(F)\| \leq 1/(4n)$ for every nontrivial irreducible representation $\pi$ of $\Gamma_{p_n}$.

Write $F_{n-1} = \{g_1,\ldots,g_{k_1}\}$ and $F = \{h_1,\ldots,h_{k_2}\}$. Note that we have chosen $k_2 > \max\{2n+1,k_1\}$. So we can define the subset $F_n \subset K_{n-1} \times \Gamma_{p_n} = K_n$ given by
$$F_n = \{(g_i, h_i h_j h_i^{-1}) \mid 1 \leq i \leq k_1 \; , \; 1 \leq j \leq k_2 \; , \; i \neq j \} \; .$$
Note that $\theta_{n-1}(F_n) = F_{n-1}$ and that the map $\theta_{n-1} : F_n \recht F_{n-1}$ is a $(k_2 - 1)$-fold covering.

Every irreducible representation $\pi \in \Irr(K_n)$ that does not factor through $\theta_{n-1}$ is of the form $\pi = \pi_1 \ot \pi_2$ with $\pi_1 \in \Irr(K_{n-1})$ and with $\pi_2$ being a nontrivial irreducible representation of $\Gamma_{p_n}$. Note that
$$\pi(F_n) = \frac{1}{k_1} \sum_{i=1}^{k_1} \bigl( \pi_1(g_i) \ot \pi_2(h_i) \, T_i \, \pi_2(h_i)^* \bigr) \; ,$$
where
$$T_i := \frac{1}{k_2-1} \; \sum_{1 \leq j \leq k_2 \, , \, j \neq i} \; \pi_2(h_j) \; .$$
For every fixed $i \in \{1,\ldots,k_1\}$, we have
$$T_i = \frac{k_2}{k_2-1} \pi_2(F) - \frac{1}{k_2-1} \pi_2(h_i) \; .$$
Therefore,
\begin{equation}\label{eq.intermediate}
\|T_i\| < 2 \, \|\pi_2(F)\| + \frac{1}{2n} \leq \frac{1}{n} \; .
\end{equation}
It then also follows that $\|\pi(F_n)\| < 1/n$.

We next prove that $F_n$ is a generating set of $K_n$. Fix $i \in \{1,\ldots,k_1\}$. For all $s,t \in \{1,\ldots,k_2\}$ with $s \neq i$ and $t\neq i$, we have
$$(g_i, h_i h_s h_i^{-1}) \, (g_i, h_i h_t h_i^{-1})^{-1} = (e , h_i \, h_s h_t^{-1} \, h_i^{-1}) \; .$$
It thus suffices to prove that the set $H_i := \{h_s h_t^{-1} \mid s,t \in \{1,\ldots,k_2\} \setminus \{i\} \}$ generates $\Gamma_{p_n}$ for each $i \in \{1,\ldots,k_1\}$.

Denote by $\lambda_0$ the regular representation of $\Gamma_{p_n}$ restricted to $\ell^2(\Gamma_{p_n}) \ominus \C 1$. Define
$$R_i = \frac{1}{k_2-1} \; \sum_{1 \leq j \leq k_2 \, , \, j \neq i} \; \lambda_0(h_j) \; .$$
By \eqref{eq.intermediate}, we get that $\|R_i\| < 1$. Then also $\|R_i R_i^*\| < 1$. So, there is no non zero function in $\ell^2(\Gamma_{p_n}) \ominus \C 1$ that is invariant under all $\lambda(h)$, $h \in H_i$. It follows that each $H_i$ is a generating set of $\Gamma_{p_n}$.

Denote by $\eta_n : K_n \recht \Gamma_{p_n}$ the projection onto the last coordinate. If the elements of $F_n$ satisfy any relation of length $\leq n$, applying $\eta_n$ will give a nontrivial relation of length $\leq 3n$ between the elements of $F$. Since such relations do not exist, we have proved that the elements of $F_n$ satisfy no relation of length $\leq n$.

Define $K = \prod_{n=1}^\infty \Gamma_{p_n}$ and still denote by $\theta_n : K \recht K_n$ the projection onto the first $n$ coordinates. Define
$$F = \{k \in K \mid \theta_n(k) \in F_n \;\;\text{for all}\; n \geq 1 \} \; .$$
Note that $F \subset K$ is closed and $\theta_n(F) = F_n$. Denoting by $\langle F \rangle$ the subgroup of $K$ generated by $F$, we get that $\theta_n(\langle F \rangle) = K_n$ for all $n$. So, $\langle F \rangle$ is dense in $K$, meaning that $F$ topologically generates $K$.

Since each map $\theta_{n-1} : F_n \recht F_{n-1}$ is an $r_n$-fold covering, there is a unique probability measure $\nu_0$ on $K$ such that $(\theta_n)_*(\nu_0)$ is the normalized counting measure on $F_n$ for each $n$. Since $r_n \geq 2$ for all $n$, the measure $\nu_0$ is non atomic. Note that the support of $\nu_0$ equals $F$. Define the symmetric probability measure $\nu$ on $K$ given by $\nu(\cU) = (\nu_0(\cU) + \nu_0(\cU^{-1}))/2$ for all Borel sets $\cU \subset K$. The support of $\nu$ equals $F \cup F^{-1}$. Since $\lambda(\nu) = (\lambda(\nu_0) + \lambda(\nu_0)^*)/2$, to conclude the proof of the theorem, it suffices to prove that $F$ is free and that $\nu_0$ is a $c_0$ probability measure.

Let $g_1^{\eps_1} \cdots g_m^{\eps_m}$ be a reduced word of length $m$ with $g_1,\ldots,g_m \in F$. Take $n \geq m$ large enough such that $\theta_n(g_i) \neq \theta_n(g_{i+1})$ whenever $g_i \neq g_{i+1}$. We then get that $\theta_n(g_1)^{\eps_1} \cdots \theta_n(g_m)^{\eps_m}$ is a reduced word of length $m \leq n$ in the elements of $F_n$. It follows that
$$e \neq \theta_n(g_1)^{\eps_1} \cdots \theta_n(g_m)^{\eps_m} = \theta_n\bigl( g_1^{\eps_1} \cdots g_m^{\eps_m} \bigr) \; .$$
So, $g_1^{\eps_1} \cdots g_m^{\eps_m} \neq e$ and we have proven that $F$ is free.

We finally prove that $\|\pi(\nu_0)\| < 1/m$ for every irreducible representation $\pi$ of $K$ that does not factor through $\theta_m : K \recht K_m$. Since there are only finitely many irreducible representations that do factor through $\theta_m : K \recht K_m$, this will conclude the proof of the theorem. Let $\pi$ be such an irreducible representation. There then exists a unique $n > m$ such that $\pi = \pi_0 \circ \theta_n$ and $\pi_0$ is an irreducible representation of $K_n$ that does not factor through $\theta_{n-1} : K_n \recht K_{n-1}$. But then $\pi(\nu_0) = \pi_0(F_n)$ and thus
$$\|\pi(\nu_0)\| = \|\pi_0(F_n)\| \leq \frac{1}{n} < \frac{1}{m} \; .$$
\end{proof}


\end{document}